\newtheorem{lemma}{Lemma}
\newtheorem{theorem}[lemma]{Theorem}
\newtheorem{assumption}[lemma]{Assumption}
\DeclareMathOperator*{\argmax}{argmax}
\begin{document}
\title{Heterogeneous Treatment Effects with Mismeasured Endogenous Treatment\thanks{First version: November, 2015. I would like to thank Federico A. Bugni, V. Joseph Hotz, Shakeeb Khan, and Matthew A. Masten for their guidance and encouragement. I am also grateful to Luis E. Candelaria, Xian Jiang, Marc Henry, Ju Hyun Kim, Arthur Lewbel, Jia Li, Arnaud Maurel, Marjorie B. McElroy, Ismael Mourifi\'e, Naoki Wakamori, Yichong Zhang, and seminar participants at University of Tokyo, Singapore Management University, University of Oslo, UC-Irvine, UC-Davis, University of Western Ontario, University of Warwick, Duke, European Meeting of the Econometric Society, Asian Meeting of the Econometric Society, North American Summer Meeting of the Econometric Society, and Triangle Econometrics Conference.}}
\author{Takuya Ura\thanks{Department of Economics, University of California, Davis, One Shields Avenue, Davis, CA 95616-5270; Email: takura@ucdavis.edu}}
\maketitle
\vspace{-.3in}
\begin{abstract}
This paper studies the identifying power of an instrumental variable in the nonparametric heterogeneous treatment effect framework when a binary treatment is mismeasured and endogenous. 
Using a binary instrumental variable, I characterize the sharp identified set for the local average treatment effect under the exclusion restriction of an instrument and the deterministic monotonicity of the true treatment in the instrument. 
Even allowing for general measurement error (e.g., the measurement error is endogenous), it is still possible to obtain finite bounds on the local average treatment effect. 
Notably, the Wald estimand is an upper bound on the local average treatment effect, but it is not the sharp bound in general. 
I also provide a confidence interval for the local average treatment effect with uniformly asymptotically valid size control.  
Furthermore, I demonstrate that the identification strategy of this paper offers a new use of repeated measurements for tightening the identified set.
\begin{description}\item Keywords: Local average treatment effect; Instrumental variable; Nonclassical measurement error; Endogenous measurement error; Partial identification\end{description}
\end{abstract}
\newpage 

\section{Introduction}\label{sec1}
Treatment effect analyses often entail a measurement error problem as well as an endogeneity problem. 
For example, \cite{black/sanders/taylor:2003} document a substantial measurement error in educational attainments in the 1990 U.S. Census.
At the same time, educational attainments are endogenous treatment variables in a return to schooling analysis, because unobserved individual ability affects both schooling decisions and wages \citep{card:2001}.  
The econometric literature, however, has offered only a few solutions for addressing the two problems at the same time. 
An instrumental variable is a standard technique for correcting endogeneity and measurement error \citep[e.g.,][]{angrist/krueger:2001}, but, to the best of my knowledge, no existing research has explicitly investigated the identifying power of an instrumental variable for the heterogeneous treatment effect when the treatment is both mismeasured and endogenous.\footnote{Many existing methods, including \cite{mahajan:2006} and \cite{lewbel:2007}, allow for the treatment effect to be heterogeneous due to observed variables. In this paper I focus on the heterogeneity due to unobserved variables by considering the local average treatment effect framework.}  

I consider a mismeasured treatment in the framework of \cite{imbens/angrist:1994} and \cite{angrist/imbens/rubin:1996}, and focus on the local average treatment effect as a parameter of interest. 
My analysis studies the identifying power of a binary instrumental variable under the following two assumptions: (i) the instrument affects the outcome and the measured treatment only through the true treatment (the exclusion restriction of an instrument),  and (ii) the instrument weakly increases the true treatment (the deterministic monotonicity of the true treatment in the instrument). 
These assumptions are an extension of \cite{imbens/angrist:1994} and \cite{angrist/imbens/rubin:1996} into the framework with mismeasured treatment.
The local average treatment effect is the average treatment effect for the compliers, that is, the subpopulation whose true treatment status is strictly affected by an instrument.
Focusing on the local average treatment effect is meaningful for a few reasons.\footnote{\cite{deaton:2009} and \cite{heckman/urzua:2010} are cautious about interpreting the local average treatment effect as a parameter of interest. See also \cite{imbens:2010,imbens:2014} for a discussion.}  
First, the local average treatment effect has been a widely used parameter to investigate the heterogeneous treatment effect with endogeneity.   
My analysis offers a tool for a robustness check to those who have already investigated the local average treatment effect.
Second, the local average treatment effect can be used to extrapolate to the average treatment effect or other parameters of interest. 
\cite{imbens:2010} emphasize the utility of reporting the local average treatment effect in addition to the other parameters of interest, because the extrapolation often requires additional assumptions and can be less credible than the local average treatment effect.

The mismeasured treatment prevents the local average treatment effect from being point-identified. 
As in \cite{imbens/angrist:1994} and \cite{angrist/imbens/rubin:1996}, the local average treatment effect is the ratio of the intent-to-treat effect over the size of compliers.\footnote{The intent-to-treat effect is defined as the mean difference of the outcome between the two groups defined by the instrument. The size of compliers is the probability of being a complier, and it is the mean difference of the true treatment (\citealp{imbens/angrist:1994} and \citealp{angrist/imbens/rubin:1996}).}
Since the measured treatment is not the true treatment, however, the size of compliers is not identified and therefore the local average treatment effect is not identified. 
The under-identification for the local average treatment effect is a consequence of the under-identification for the size of compliers; if I assumed no measurement error, I could compute the size of compliers based on the measured treatment and therefore the local average treatment effect would be the Wald estimand.\footnote{The Wald estimand in this paper is defined as the ratio of the intent-to-treat effect over the mean difference of the measured treatment between the two groups defined by the instrument.
Note that the Wald estimand is identified because it uses the measured treatment, but it is not the local average treatment effect because it does not use the true treatment.}

I take a worst case scenario approach with respect to the measurement error and allow for a general form of measurement error.
The only assumption concerning the measurement error is its independence of the instrumental variable. 
(Section \ref{sec3Less} dispenses with this assumption and shows that it is still possible to bound the local average treatment effect.)
I consider the following types of measurement error. 
First, the measurement error is nonclassical; that is, it can be dependent on the true treatment. 
The measurement error for a binary variable is always nonclassical. 
It is because the measurement error cannot be negative (positive) when the true variable takes the low (high) value. 
Second, I allow the measurement error to be endogenous (or differential); that is, the measured treatment can be dependent on the outcome conditional on the true treatment. 
For example, as \cite{black/sanders/taylor:2003} argue, the measurement error for educational attainment depends on the familiarity with the educational system in the U.S., and immigrants may have a higher rate of measurement error. 
At the same time, the familiarity with the U.S. educational system can be related to the English language skills, which can affect the labor market outcomes. 
\cite{bound/brown/mathiowetz:2001} also argue that measurement error is likely to be endogenous in some empirical applications. 
(In Appendix  D, I explore for the identifying power of the exogeneity assumption on the measurement error. The additional assumption yields a tighter sharp identified set, but I still cannot point identify the local average treatment effect in general.)
Third, there is no assumption concerning the marginal distribution of the measurement error.
It is not necessary to assume anything about the accuracy of the measurement.

In the presence of measurement error, I derive the identified set for the local average treatment effect (Theorem \ref{theorem1}). 
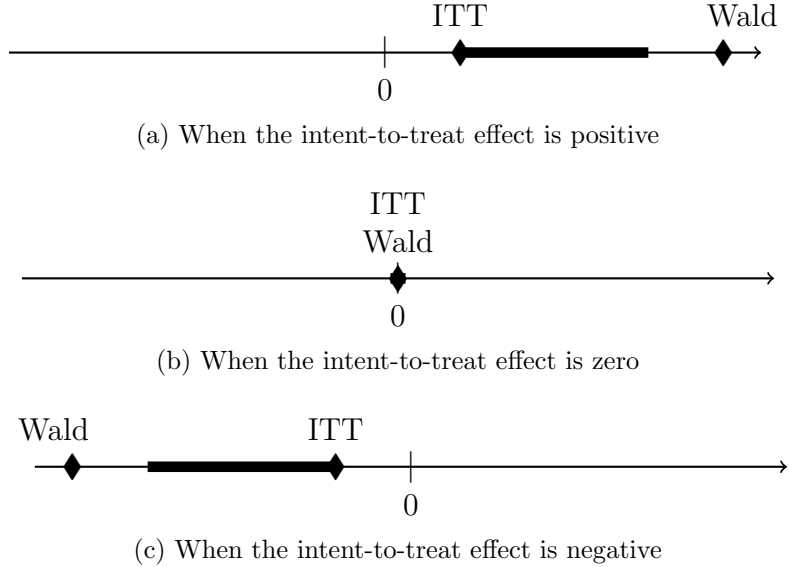
\begin{figure}
\centering
\subfloat[When the intent-to-treat effect is positive]{
\begin{tikzpicture}[thick]
  \draw[->] (0,1) to (10,1);
  \node[] at (5,1) {$|$};
  \node[] at (5,.5) {$0$};
  \node[] at (6,1) {$\blacklozenge$};
  \node[] at (6,1.5) {ITT};
  \draw [line width=4] (6,1) to (8.5,1);
  \node[] at (9.5,1) {$\blacklozenge$};
  \node[] at (9.75,1.5) {Wald};
\end{tikzpicture}
}
\\
\subfloat[When the intent-to-treat effect is zero]{
\begin{tikzpicture}[thick]
  \draw[->] (0,1) to (10,1);
  \node[] at (5,.5) {$0$};
  \node[] at (5,1) {$|$};
  \draw [line width=4] (4.9,1) to (5.1,1);
  \node[] at (5,1) {$\blacklozenge$};
  \node[] at (5,1.5) {Wald};
  \node[] at (5,2) {ITT};
\end{tikzpicture}
}
\\
\subfloat[When the intent-to-treat effect is negative]{
\begin{tikzpicture}[thick]
  \draw[->] (0,1) to (10,1);
  \node[] at (5,1) {$|$};
  \node[] at (5,.5) {$0$};
  \node[] at (4,1) {$\blacklozenge$};
  \node[] at (4,1.5) {ITT};
  \draw [line width=4] (1.5,1) to (4,1);
  \node[] at (.5,1) {$\blacklozenge$};
  \node[] at (.25,1.5) {Wald};
\end{tikzpicture}
}
\\
\caption{Identified set for the local average treatment effect.\label{GraphResult} ITT is the intent-to-treat effect and Wald is the Wald estimand. The thick line is the identified set for the local average treatment effect. Note that the identified set is $\{0\}$ when the intent-to-treat effect is zero.}
\end{figure}
Figure \ref{GraphResult} describes the relationship among the identified set for the local average treatment effect, the intent-to-treat effect, and the Wald estimand. 
First, the intent-to-treat effect has the same sign as the local average treatment effect. 
This is why Figure \ref{GraphResult} has three subfigures according to the sign of the intent-to-treat effect: (a) positive, (b) zero, and (c) negative. 
Second, the intent-to-treat effect is the sharp lower bound on the local average treatment effect in absolute value. 
Third, the Wald estimand is an upper bound on the local average treatment effect in absolute value. 
The Wald estimand is the probability limit of the instrumental variable estimator in my framework, which ignores the measurement error but controls only for the endogeneity. 
This point implies that an upper bound on the local average treatment effect is obtained by ignoring the measurement error. 
\cite{frazis/loewenstein:2003} obtain a similar result in the homogeneous treatment effect model. 
Last, but most importantly, the sharp upper bound in absolute value can be smaller than the Wald estimand. 
It is a potential cost of ignoring the measurement error and using the Wald estimand.
Even for analyzing only an upper bound on the local average treatment effect, it is recommended to take the measurement error into account, which can yield a smaller upper bound than the Wald estimand.
Section \ref{sec3.1} investigates when the Wald estimand coincide with the sharp upper bound. 

I extend the identification analysis to incorporate covariates other than the treatment variable. 
In this setting, the instrumental variable satisfies the exclusion restriction after conditioning covariates.
Based on the insights from \cite{abadie:2003} and \cite{frolich:2007}, I show that the identification strategy of this paper works in the presence of covariates. 

I construct a confidence interval for the local average treatment effect.
To construct the confidence interval, first, I approximate the identified set by discretizing the support of the outcome where the discretization becomes finer as the sample size increases.
The approximation for the identified set resembles many moment inequalities in \cite{menzel:2014} and \cite{chernozhukov/chetverikov/kato:2014}, who consider a finite but divergent number of moment inequalities. 
I apply a bootstrap method in \cite{chernozhukov/chetverikov/kato:2014} to construct a confidence interval with uniformly asymptotically valid asymptotic size control.
The confidence interval also rejects parameter values which do not belong to the sharp identified set. 
An empirical excise and a Monte Carlo simulation demonstrate a finite sample property of the proposed inference method. 
The empirical exercise is based on \cite{abadie:2003}, who studies the effects of 401(k) participation on financial savings, and considers a misclassification of the 401(k) participation.\footnote{The pension type is subject to a measurement error. See, for example, \cite{gustman/steinmeier/tabatabai:2007} for the pension type misclassification in the Health and Retirement Study.} 

As an extension, I consider the dependence between the instrument and the measurement error. 
In this case, there is no assumption on the measurement error, and therefore the measured treatment has no information on the local average treatment effect. 
Even without using the measured treatment, however, I can still apply the same identification strategy and obtain finite (but less tight) bounds on the local average treatment effect. 

Moreover, I offer a new use of repeated measurements as additional sources for identification. 
The existing practice of repeated measurements uses one of them as an instrumental variable, as in \cite{hausman/ichimura/newey/powell:1991}, \cite{hausman/newey/powell:1995}, \cite{mahajan:2006}, and \cite{hu:2008}.\footnote{It is worthwhile to mention that \cite{lewbel:2007} allows for a certain form of the endogeneity in a repeated measurement, under which a repeated measurement still satisfies some exclusion restriction.}
However, when the true treatment is endogenous, the repeated measurements are likely to be endogenous and are not good candidates for an instrumental variable. 
My identification strategy demonstrates that those variables are useful for bounding the local average treatment effect in the presence of measurement error, even if none of the repeated measurement are valid instrumental variables.

The remainder of this paper is organized as follows. 
Section \ref{sec1.1} explains several empirical examples motivating mismeasured endogenous treatments and Section \ref{sec1.2} reviews the related econometric literature.  
Section \ref{sec2} introduces mismeasured treatments in the framework of \cite{imbens/angrist:1994} and \cite{angrist/imbens/rubin:1996}.
Section \ref{sec3} constructs the identified set for the local average treatment effect.
I also discuss two extensions. One extension describes how repeated measurements tighten the identified set even if I cannot use any of the repeated measurements as an instrumental variable, and the other dispenses with independence between the instrument and the measurement error. 
Section \ref{sec4} proposes an inference procedure for the local average treatment effect. 
Section \ref{sec5.1} conducts  an empirical illustrations, and Section \ref{sec5} conducts Monte Carlo simulations. 
Section \ref{sec6} concludes. 
Appendix collects proofs and remarks.

\subsection{Examples for mismeasured endogenous treatments}\label{sec1.1}
I introduce several empirical examples in which binary treatments can be both endogenous and mismeasured at the same time. 
The first example is the return to schooling, in which the outcome is wages and the treatment is educational attainment, for example, whether a person has completed college or not.
Unobserved individual ability affects both the schooling decision and wage determination, which leads to the endogeneity of educational attainment in the wage equation (see, for example, \cite{card:2001}). 
Moreover, survey datasets record educational attainments based on the interviewee's answers, and these self-reported educational attainments are subject to measurement error. 
\cite{griliches:1977}, \cite{angrist/krueger:1999}, \cite{kane/rouse/staiger:1999}, \citealp{card:2001}, \cite{black/sanders/taylor:2003} have pointed out the mismeasurement of educational attainments. 
For example, \cite{black/sanders/taylor:2003} estimate that the 1990 Decennial Census has 17.7\% false positive rate of reporting a doctoral degree.

The second example is labor supply response to welfare program participation, in which the outcome is employment status and the treatment is welfare program participation. 
Self-reported welfare program participation in survey datasets can be mismeasured \citep{hernandez/pudney:2007}. 
The psychological cost for welfare program participation, welfare stigma, affects job search behavior and welfare program participation simultaneously; that is, welfare stigma may discourage individuals from participating in a welfare program, and, at the same time, affect an individual's effort in the labor market (see \cite{moffitt:1983} and \cite{besley/coate:1992} for a discussion on the welfare stigma).
Moreover, the welfare stigma gives welfare recipients some incentive not to reveal their participation status to the survey, which causes endogenous measurement error in that the unobserved individual heterogeneity affects both the measurement error and the outcome. 

The third example is the effect of a job training program on wages.
As it is similar to the return to schooling, unobserved individual ability plays a key role in this example. 
Self-reported completion of job training program is also subject to measurement error \citep{bollinger:1996}.  
\cite{frazis/loewenstein:2003} develop a methodology for evaluating a homogeneous treatment effect with mismeasured endogenous treatment, and apply their methodology to evaluate the effect of a job training program on wages.

The last example is the effect of maternal drug use on infant birth weight.
\cite{kaestner/joyce/wehbeh:1996} estimate that a mother tends to underreport her drug use, but, at the same time, she tends to report it correctly if she is a heavy user. 
When the degree of drug addiction is not observed, it becomes an individual unobserved heterogeneity which affects infant birth weight and the measurement in addition to the drug use. 

\subsection{Literature review}\label{sec1.2}
Here I summarize the related econometric literature. 
\cite{mahajan:2006}, \cite{lewbel:2007}, and \cite{hu:2008} use an instrumental variable to correct for measurement error in a binary (or discrete) treatment in the homogeneous treatment effect framework and they achieve nonparametric point identification of the average treatment effect.
They assume that the true treatment is exogenous, whereas I allow it to be endogenous.

Finite mixture models are related to my analysis.
I consider the unobserved binary treatment, whereas finite mixture models deal with unobserved type. 
\cite{henry/kitamura/salanie:2014} and \cite{henry/jochmans/salanie:2015} are the most closely related. 
They investigate the identification problem in finite mixture models, by using the exclusion restriction in which an instrumental variable only affects the mixing distribution of a type without affecting the component distribution (that is, the conditional distribution given the type). 
If I applied their approach directly to my framework, their exclusion restriction would imply conditional independence between the instrumental variable and the outcome given the true treatment. 
This conditional independence implies that the local average treatment effect does not exhibit essential heterogeneity \citep{heckman/schmierer/urzua:2010} and that the local average treatment effect is the mean difference between the control and treatment groups.\footnote{\label{FootNote3}This footnote uses the notation introduced in Section \ref{sec2}.
The conditional independence implies $E[Y\mid T^{\ast},Z]=E[Y\mid T^{\ast}]$.
Under this assumption, 
\begin{eqnarray*}
E[Y\mid Z]
&=&
P(T^{\ast}=1\mid Z)E[Y\mid Z,T^{\ast}=1]+P(T^{\ast}=0\mid Z)E[Y\mid Z,T^{\ast}=0]\\
&=&
P(T^{\ast}=1\mid Z)E[Y\mid T^{\ast}=1]+P(T^{\ast}=0\mid Z)E[Y\mid T^{\ast}=0]
\end{eqnarray*}
and therefore $\Delta E[Y\mid Z]=\Delta E[T^{\ast}\mid Z](E[Y\mid T^{\ast}=1]-E[Y\mid T^{\ast}=0])$. I obtain the equality 
$$
\frac{\Delta E[Y\mid Z]}{\Delta E[T^{\ast}\mid Z]}=E[Y\mid T^{\ast}=1]-E[Y\mid T^{\ast}=0]
$$
This above equation implies that the local average treatment effect does not depend on the compliers of consideration, which is in contrast with the essential heterogeneity of the treatment effect. 
Furthermore, since $E[Y\mid T^{\ast}=1]-E[Y\mid T^{\ast}=0]$ is the local average treatment effect, I do not need to care about the endogeneity.}  
Instead of applying the approaches in \cite{henry/kitamura/salanie:2014} and \cite{henry/jochmans/salanie:2015}, I use a different exclusion restriction in which the instrumental variable does not affect the outcome or the measured treatment directly. 

A few papers have applied an instrumental variable to a mismeasured binary regressor in the homogenous treatment effect framework. 
They include \cite{aigner:1973}, \cite{kane/rouse/staiger:1999}, \cite{bollinger:1996}, \cite{black/berger/scott:2000}, \cite{frazis/loewenstein:2003}, and \cite{ditragliaand/garica-jimeno:2015}. 
\cite{frazis/loewenstein:2003} and \cite{ditragliaand/garica-jimeno:2015} are the most closely related among them, since they allow for endogeneity. 
Here I allow for heterogeneous treatment effects, and I contribute to the heterogeneous treatment effect literature by investigating the consequences of the measurement errors in the treatment.

\cite{kreider/pepper:2007}, \cite{molinari:2008}, \cite{imai/yamamoto:2010}, and \cite{kreider/pepper/gundersen/joliffe:2012} apply a partial identification strategy for the average treatment effect to the mismeasured binary regressor problem by utilizing the knowledge of the marginal distribution for the true treatment.
Those papers use auxiliary datasets to obtain the marginal distribution for the true treatment.
\cite{kreider/pepper/gundersen/joliffe:2012} is the most closely related,  in that they allow for both treatment endogeneity and endogenous measurement error. 
My instrumental variable approach can be an an alternative strategy to deal with mismeasured endogenous treatment.
It is worthwhile because, as mentioned in \cite{schennach:2013}, the availability of an auxiliary dataset is limited in empirical research.  
Furthermore, it is not always the case that the results from auxiliary datasets is transported into the primary dataset \citep[][p.10]{carroll/crainiceanu/ruppert/stefanski:2012}, 

Some papers investigate mismeasured endogenous continuous variables, instead of binary variables.  
\cite{amemiya:1985,hsiao:1989,lewbel:1998,song/schennach/white:2015} consider nonlinear models with mismeasured continuous explanatory variables. 
The continuity of the treatment is crucial for their analysis, because they assume classical measurement error. 
The treatment in my analysis is binary and therefore the measurement error is nonclassical.
\cite{hu/shiu/woutersen:2015} consider mismeasured endogenous continuous variables in single index models. 
However, their approach depends on taking derivatives of the conditional expectations with respect to  the continuous variable. 
It is not clear if it can be extended to binary variables. 
\cite{song:2015} considers the semi-parametric model when endogenous continuous variables are subject to nonclassical measurement error. 
He assumes conditional independence between the instrumental variable and the outcome given the true treatment, which would impose some structure on the outcome equation when a treatment is binary (see Footnote \ref{FootNote3}). 
Instead I propose an identification strategy without assuming any structure on the outcome equation.

\cite{chalak:2013} investigates the consequences of measurement error in the instrumental variable instead of the treatment. 
He assumes that the treatment is perfectly observed, whereas I allow for it to be measured with error. 
Since I assume that the instrumental variable is perfectly observed, my analysis is not overlapped with \cite{chalak:2013}.

\cite{Manski:2003}, \cite{blundell/gosling/ichimura/meghir:2007}, and \cite{kitagawa:2010b} have similar identification strategy in the context of sample selection models. 
These papers also use the exclusion restriction of the instrumental variable for their partial identification results. 
Particularly, \cite{kitagawa:2010b} derives the integrated envelope from the exclusion restriction, which is similar to the total variation distance in my analysis because both of them are characterized as a supremum over the set of the partitions. 
First and the most importantly, I consider mismeasurement of the treatment, whereas the sample selection model considers truncation of the outcome. 
It is not straightforward to apply their methodologies in sample selection models into mismeasured treatment problem.
Second, I offer an inference method with uniform size control, but \cite{kitagawa:2010b} derives only point-wise size control. 
Last, \cite{blundell/gosling/ichimura/meghir:2007} and \cite{kitagawa:2010b} use their result for specification test, but I cannot use it to carry out a specification test because the sharp identified set of my analysis is always non-empty. 

Finally, \cite{calvi/lewbel/tommasi:2017} and \cite{yanagi:2017} have recently discussed identification issues of the local average treatment effect in the presence of a measurement error in the treatment variable. 
They are built on results in the previous draft of this paper \citep{ura:2015} to derive novel and important results when there are additional variables in a dataset: multiple measurements of the true treatment variable \citep{calvi/lewbel/tommasi:2017} or multiple instrumental variables \citep{yanagi:2017}. 
In contrast, the results of this paper are valid without these additional variables and only requires the assumptions in \cite{imbens/angrist:1994} and \cite{angrist/imbens/rubin:1996}.

\section{Local average treatment effect framework with misclassification}\label{sec2}
My analysis considers a mismeasured treatment in the framework of \cite{imbens/angrist:1994} and \cite{angrist/imbens/rubin:1996}. 
The objective is to evaluate the causal effect of a binary treatment $T^\ast\in\{0,1\}$ on an outcome $Y$, where $T^\ast=0$ represents the control group and $T^\ast=1$ represents the treatment group. 
To deal with endogeneity of $T^\ast$, I use a binary instrumental variable $Z\in\{0,1\}$ which shifts $T^\ast$ exogenously without any direct effect on $Y$.
The treatment $T^\ast$ of interest is not directly observed, and instead there is a binary measurement $T\in\{0,1\}$ for $T^\ast$. 
I put the $\ast$ symbol on $T^\ast$ to emphasize that the true treatment $T^\ast$ is unobserved. 
I allow $Y$ to be discrete, continuous or mixed; $Y$ is only required to have some known dominating finite measure $\mu_Y$ on the real line.
For example, $\mu_Y$ can be the Lebesgue measure or the counting measure. 
Let $\mathbf{Y}$ be the support for the random variable $Y$ and  $\mathbf{T}=\{0,1\}$ be the support for $T$. 

To describe the data generating process, I consider the counterfactual variables. 
$T_{z}^{\ast}$ is the counterfactual true treatment when $Z=z$.
$Y_{t^\ast}$ is the counterfactual outcome when $T^\ast=t^\ast$. 
$T_{t^\ast}$ is the counterfactual measured treatment when $T^\ast=t^\ast$. 
The individual treatment effect is $Y_1-Y_0$. 
It is not directly observed; $Y_0$ and $Y_1$ are not observed at the same time.
Only $Y_{T^\ast}$ is observable. 
Using the notation, the observed variables $(Y,T,Z)$ are generated by the three equations:  
\begin{eqnarray}
T&=&T_{T^\ast}\label{measurement}\\
Y&=&Y_{T^\ast}\label{outcome}\\
T^\ast&=&T^\ast_Z\label{treatment_assignment}.
\end{eqnarray}
Figure \ref{arrows} graphically describes the relationship among the instrument $Z$, the (unobserved) true treatment $T^{\ast}$, the measured treatment $T$, and the outcome $Y$. 
\begin{figure}
\centering
\begin{tikzpicture}[]
  \node[] at (0,0) {outcome $Y$};
  \node[] at (5,0) {true treatment $T^\ast$};
  \node[] at (5,-3) {measured treatment $T$}; 
  \node[] at (10,0) {instrument $Z$};
  \draw[->] (8,0) to (7,0);
  \draw[->] (3,0) to (2,0);
  \draw[->] (5,-1) to (5,-2);  
\end{tikzpicture}
\caption{Graphical representation of dependencies among variables\label{arrows}}
\end{figure}
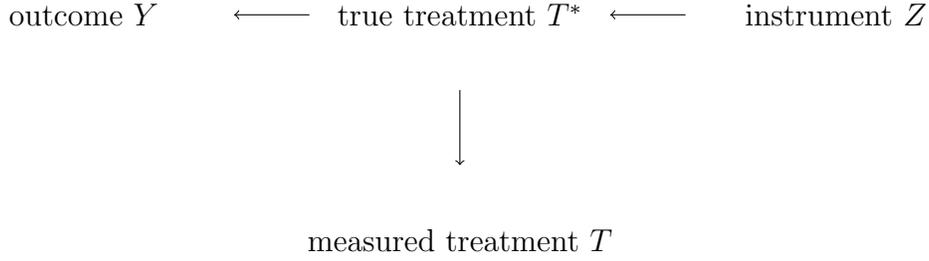  
(\ref{measurement}) is the measurement equation, which is the arrow from $T^\ast$ to $T$ in Figure \ref{arrows}. 
$T-T^\ast$ is the measurement error; $T-T^\ast=1$ (or $T_0=1$) represents a false positive and $T-T^\ast=-1$ (or $T_1=0$) represents a false negative.  
Equations (\ref{outcome}) and (\ref{treatment_assignment}) are the same as \cite{imbens/angrist:1994} and \cite{angrist/imbens/rubin:1996}. 
(\ref{outcome}) is the outcome equation, which is the arrow from $T^\ast$ to $Y$ in Figure \ref{arrows}. 
(\ref{treatment_assignment}) is the treatment assignment equation, which is the arrow from $Z$ to $T^\ast$ in Figure \ref{arrows}. 
A  potentially non-zero correlation between $(Y_0,Y_1)$ and $(T^\ast_{0},T^\ast_{1})$ causes an endogeneity problem.  

In a return to schooling analysis, $Y$ is wages, $T^\ast$ is the true indicator for college completion, $Z$ is the proximity to college, and $T$ is the self-reported college completion.
The treatment effect $Y_1-Y_0$ in the return to schooling is the effect of college completion $T^\ast$ on wages $Y$. 
The college completion is not correctly measured in a survey dataset, such that only the self report $T$ is observed. 

This section and Section \ref{sec3} impose only the following assumption. 
\begin{assumption}\label{assumption1}
(i) For each $t^\ast=0,1$, $Z$ is independent of $(T_{t^\ast},Y_{t^\ast},T^\ast_{0},T^\ast_{1})$.
(ii) $T^\ast_{1}\geq T^\ast_{0}$ almost surely.
(iii) $0<P(Z=1)<1$. 
\end{assumption} 
Assumption \ref{assumption1} (i) is the exclusion restriction and I consider stochastic independence instead of mean independence.
Although it is stronger than the minimal conditions for the identification for the local average treatment effect without measurement error, a large part of the existing applied papers assume stochastic independence \citep[][p.405]{huber/mellace:2014}. 
$Z$ is also independent of $T_{t^\ast}$ conditional on $(Y_{t^\ast},T^\ast_{0},T^\ast_{1})$, which is the only assumption on the measurement error for the identified set in Section \ref{sec3}. 
(Section \ref{sec3Less} even dispenses with this assumption.)
Assumption \ref{assumption1} (ii) is the monotonicity condition for the instrument, in which the instrument $Z$ increases the value of $T^\ast$ for all the individuals. 
\cite{dechaisemartin:2014} relaxes the monotonicity condition, and it can be shown in Appendix E that the identification results in my analysis still holds with a slight modification under the complier-defiers-for-marginals condition in \cite{dechaisemartin:2014}. 
Note that Assumption \ref{assumption1} does not include a relevance condition for the instrumental variable. 
The standard relevance condition $T^\ast_{1}\ne T^\ast_{0}$ does not affect the identification results in my analysis.
I will discuss the relevance condition in my framework after Theorem \ref{theorem1}. 
Assumption \ref{assumption1} (iii) excludes that $Z$ is constant. 

As I emphasized in the introduction, the framework here does not assume anything on measurement error $T_{t^\ast}$ except for its independence from $Z$. 
Assumption \ref{assumption1} does not impose any restriction on the marginal distribution of the measurement error $T_{t^\ast}$ or on the relationship between the measurement error $T_{t^\ast}$ and $(Y_{t^\ast},T^\ast_{0},T^\ast_{1})$. 
Particularly, the measurement error can be endogenous, that is, $T_{t^\ast}$ and $(Y_{t^\ast},T^\ast_{0},T^\ast_{1})$ can be correlated.\footnote{Although it has not been supported in validation data studies (e.g., \citealp{black/sanders/taylor:2003}), a majority of the literature on measurement error has assume that the measurement error is exogenous (\citealp{bound/brown/mathiowetz:2001}). I also explore for the identifying power of the exogenous measurement error assumption in Appendix D.}

I focus on the local average treatment effect, which is defined by 
$$
\theta=E[Y_1-Y_0\mid T_0^\ast<T_1^\ast].
$$ 
The local average treatment effect is the average of the treatment effect $Y_1-Y_0$ over the subpopulation (the compliers) whose treatment status is strictly affected by the instrument. 
\citet[][Theorem 1]{imbens/angrist:1994} show that the local average treatment effect equals 
$$
\frac{\Delta E[Y\mid Z]}{\Delta E[T^*\mid Z]},
$$
where I define $\Delta E[X\mid Z]= E[X\mid Z=1]-E[X\mid Z=0]$ for a random variable $X$. 
Note that $\Delta E[Y\mid Z]$ is the intent-to-treat effect, that is, the regression of $Y$ on $Z$.
The treatment is measured with error, and therefore the above fraction $\Delta E[Y\mid Z]/\Delta E[T^*\mid Z]$ is not the Wald estimand  
$$
\frac{\Delta E[Y\mid Z]}{\Delta E[T\mid Z]}.
$$
Since $\Delta E[T^*\mid Z]$ is not identified, I cannot identify the local average treatment effect.  
The failure of point identification comes purely from the measurement error, because the local average treatment effect would be point-identified under $T=T^\ast$.
In fact, my proposed methodology in this paper is essentially a bounding strategy of $\Delta E[T^*\mid Z]$ and I use the bound to construct the sharp identified set for the local average treatment effect. 

\section{Identified set for the local average treatment effect}\label{sec3}
This section show how the instrumental variable partially identifies the local average treatment effect in the framework of Section \ref{sec2}. 
Before defining the identified set, I express the local average treatment effect as a function of the underlying distribution $P^{\ast}$ of $(Y_0,Y_1,T_0,T_1,T^\ast_{0},T^\ast_{1},Z)$.
I use the $\ast$ symbol on $P^{\ast}$ to clarify that $P^{\ast}$ is the distribution of the unobserved variables. 
I denote the expectation operator $E$ by $E_{P^{\ast}}$ when I need to clarify the underlying distribution.  
The local average treatment effect is a function of the unobserved distribution $P^{\ast}$: 
$$
\theta(P^{\ast})= E_{P^{\ast}}[Y_1-Y_0\mid T_0^\ast<T_1^\ast].
$$
I denote by $\Theta$ the parameter space for the local average treatment effect $\theta$, that is, the set of $\int yf_1(y)d\mu_Y(y)-\int yf_0(y)d\mu_Y(y)$ where $f_0$ and $f_1$ are density functions dominated by the known probability measure $\mu_Y$. 
For example, $\Theta=[-1,1]$ when $Y$ is binary. 

The identified set is the set of parameter values for the local average treatment effect which is consistent with the distribution of the observed variables.  
I use $P$ for the distribution of the observed variables $(Y,T,Z)$ 
The equations (\ref{measurement}), (\ref{outcome}), and (\ref{treatment_assignment}) induce the distribution of the observables $(Y,T,Z)$ from the unobserved distribution $P^{\ast}$, and I denote by $P^{\ast}_{(Y,T,Z)}$ the induced distribution.
When the distribution of $(Y,T,Z)$ is $P$, the set of $P^{\ast}$ which induces $P$ is $\{P^{\ast}\in\mathcal{P}^{\ast}: P=P^{\ast}_{(Y,T,Z)}\}$, where $\mathcal{P}^{\ast}$ is the set of $P^{\ast}$'s satisfying Assumptions \ref{assumption1}. 
For every distribution $P$ of $(Y,T,Z)$, the (sharp) identified set for the local average treatment effect is defined as 
$\Theta_I(P)=\{\theta(P^{\ast})\in\Theta: P^{\ast}\in\mathcal{P}^{\ast}\mbox{ and }P=P^{\ast}_{(Y,T,Z)}\}$.

\citet[][Theorem 1]{imbens/angrist:1994} provides a relationship between $\Delta E[Y\mid Z]$ and the local average treatment effect: 
\begin{equation}\label{eqIA}
\theta(P^{\ast})P^{\ast}(T_0^\ast<T_1^\ast)=\Delta E_{P^{\ast}}[Y\mid Z],
\end{equation}
This equation gives the two pieces of information of $\theta(P^{\ast})$. 
First, the sign of $\theta(P^{\ast})$ is the same as $\Delta E_{P^{\ast}}[Y\mid Z]$.
Second, the absolute value of $\theta(P^{\ast})$ is at least the absolute value of $\Delta E_{P^{\ast}}[Y\mid Z]$.
The following lemma summarizes these two pieces. 
\begin{lemma}\label{lemma1}
Under Assumption \ref{assumption1},
$$
\theta(P^{\ast})\Delta E_{P^{\ast}}[Y\mid Z]\geq 0
$$
$$
|\theta(P^{\ast})|\geq |\Delta E_{P^{\ast}}[Y\mid Z]|.
$$
\end{lemma}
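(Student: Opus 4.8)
The plan is to read both inequalities directly off the Imbens--Angrist identity (\ref{eqIA}). I would first note that (\ref{eqIA}) concerns only $(Y,T^\ast,Z)$ and is untouched by the mismeasurement of $T^\ast$, so it is nothing but \citet[][Theorem 1]{imbens/angrist:1994} in the present notation; because it is convenient for the degenerate case below, I would also record its short derivation. By Assumption \ref{assumption1}(i), writing $Y_{T^\ast_z}=Y_0\mathbbm{1}\{T^\ast_z=0\}+Y_1\mathbbm{1}\{T^\ast_z=1\}$ and applying the independence of $Z$ from $(Y_{t^\ast},T^\ast_0,T^\ast_1)$ separately for $t^\ast=0$ and $t^\ast=1$ gives $E_{P^\ast}[Y\mid Z=z]=E_{P^\ast}[Y_{T^\ast_z}\mid Z=z]=E_{P^\ast}[Y_{T^\ast_z}]$, hence $\Delta E_{P^\ast}[Y\mid Z]=E_{P^\ast}[Y_{T^\ast_1}-Y_{T^\ast_0}]$. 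By Assumption \ref{assumption1}(ii), $T^\ast_1\geq T^\ast_0$ almost surely, so $Y_{T^\ast_1}-Y_{T^\ast_0}=(Y_1-Y_0)\mathbbm{1}\{T^\ast_0<T^\ast_1\}$ almost surely; taking expectations yields $\Delta E_{P^\ast}[Y\mid Z]=E_{P^\ast}[(Y_1-Y_0)\mathbbm{1}\{T^\ast_0<T^\ast_1\}]=\theta(P^\ast)\,P^\ast(T^\ast_0<T^\ast_1)$, which is (\ref{eqIA}).

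Given (\ref{eqIA}), the rest is immediate. Put $p:=P^\ast(T^\ast_0<T^\ast_1)$; since $p$ is a probability, $0\leq p\leq 1$. Multiplying (\ref{eqIA}) through by $\theta(P^\ast)$ gives $\theta(P^\ast)\,\Delta E_{P^\ast}[Y\mid Z]=\theta(P^\ast)^2\,p\geq 0$, which is the first assertion. Taking absolute values in (\ref{eqIA}) gives $|\Delta E_{P^\ast}[Y\mid Z]|=|\theta(P^\ast)|\,p\leq|\theta(P^\ast)|$, using $p\leq 1$, which is the second assertion.

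The only point that needs care is the degenerate case $p=0$, i.e.\ the absence of compliers --- precisely the configuration that a relevance condition (deliberately omitted from Assumption \ref{assumption1}) would rule out. Then $\theta(P^\ast)=E_{P^\ast}[Y_1-Y_0\mid T^\ast_0<T^\ast_1]$ conditions on a null event and is fixed only by convention, as some element of $\Theta$; but the derivation above still gives $\Delta E_{P^\ast}[Y\mid Z]=E_{P^\ast}[(Y_1-Y_0)\mathbbm{1}\{T^\ast_0<T^\ast_1\}]=0$, so $\theta(P^\ast)\,\Delta E_{P^\ast}[Y\mid Z]=0$ and $|\theta(P^\ast)|\geq 0=|\Delta E_{P^\ast}[Y\mid Z]|$ hold whatever that convention is. I do not anticipate a genuine obstacle here: once (\ref{eqIA}) is in hand the lemma is a two-line consequence. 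The substantive work is deferred to Theorem \ref{theorem1}, which must quantify how much larger than $|\Delta E_{P^\ast}[Y\mid Z]|$ the quantity $|\theta(P^\ast)|$ can be --- equivalently, produce a lower bound on $p$ expressed through the distribution of the observed $(Y,T,Z)$.
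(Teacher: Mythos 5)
Your proposal is correct and follows essentially the same route as the paper: both read the two inequalities directly off Equation (\ref{eqIA}) by multiplying through by $\theta(P^{\ast})$ and by taking absolute values with $P^{\ast}(T_0^\ast<T_1^\ast)\leq 1$. Your additional derivation of (\ref{eqIA}) and your explicit handling of the no-compliers case $P^{\ast}(T_0^\ast<T_1^\ast)=0$ are sound but not part of the paper's (two-line) proof.
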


I derive a new implication from the exclusion restriction for the instrumental variable in order to obtain an upper bound on $\theta(P^{\ast})$ in absolute value. 
To explain the new implication, I introduce the total variation distance, which is the $L^1$ distance between the distribution $f_1$ and $f_0$:
For any random variable $X$, define 
$$
TV_X=\frac{1}{2}\int |f_{X\mid Z=1}(x)-f_{X\mid Z=0}(x)|d\mu_X(x),
$$
where $\mu_X$ is a dominating measure for the distribution of $X$.
\begin{lemma}\label{lemma2}
Under Assumption \ref{assumption1},
$$
TV_{(Y,T)}\leq TV_{T^\ast}=P^{\ast}(T_0^\ast<T_1^\ast).
$$
\end{lemma}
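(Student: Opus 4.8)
The plan is to partition the population into the three compliance groups allowed by Assumption~\ref{assumption1}(ii) and to show that only the complier group produces any gap between the conditional laws of $(Y,T)$ given $Z=1$ and given $Z=0$, so that the gap is controlled by the complier share $P^{\ast}(T_0^{\ast}<T_1^{\ast})$.

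I would first dispose of the equality $TV_{T^{\ast}}=P^{\ast}(T_0^{\ast}<T_1^{\ast})$. Since $T^{\ast}$ is binary, with $\mu_{T^{\ast}}$ the counting measure on $\{0,1\}$ one has $TV_{T^{\ast}}=|P^{\ast}(T^{\ast}=1\mid Z=1)-P^{\ast}(T^{\ast}=1\mid Z=0)|$. By the treatment-assignment equation~(\ref{treatment_assignment}) together with the independence in Assumption~\ref{assumption1}(i), $P^{\ast}(T^{\ast}=1\mid Z=z)=P^{\ast}(T^{\ast}_z=1)$, so this equals $|P^{\ast}(T^{\ast}_1=1)-P^{\ast}(T^{\ast}_0=1)|$; under the monotonicity in Assumption~\ref{assumption1}(ii) this is $P^{\ast}(T^{\ast}_0=0,\,T^{\ast}_1=1)=P^{\ast}(T^{\ast}_0<T^{\ast}_1)$.

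For the inequality, write $\pi_a=P^{\ast}(T^{\ast}_0=T^{\ast}_1=1)$, $\pi_n=P^{\ast}(T^{\ast}_0=T^{\ast}_1=0)$, and $\pi_c=P^{\ast}(T^{\ast}_0<T^{\ast}_1)$, which sum to one because monotonicity excludes $T^{\ast}_0>T^{\ast}_1$. On the always-taker event $T^{\ast}=1$, so by~(\ref{measurement})--(\ref{treatment_assignment}) the realized pair $(Y,T)$ equals $(Y_1,T_1)$; on the never-taker event $(Y,T)=(Y_0,T_0)$; on the complier event $(Y,T)=(Y_0,T_0)$ if $Z=0$ and $(Y,T)=(Y_1,T_1)$ if $Z=1$. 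Applying Assumption~\ref{assumption1}(i) with $t^{\ast}=1$ gives that $Z$ is independent of $(Y_1,T_1,T^{\ast}_0,T^{\ast}_1)$, hence of $(Y_1,T_1)$ jointly with the always-taker indicator; applying it with $t^{\ast}=0$ does the same for the never-takers, and in either case $P^{\ast}(\text{group}\mid Z=z)$ does not depend on $z$ since each group is a function of $(T^{\ast}_0,T^{\ast}_1)$. Therefore the conditional density of $(Y,T)$ given $Z=z$ with respect to the dominating measure $\mu_{(Y,T)}$ (the product of $\mu_Y$ with counting measure on $\{0,1\}$) can be written as
\[
f_{(Y,T)\mid Z=z}=\pi_a\,h_a+\pi_n\,h_n+\pi_c\,g_z,
\]
where $h_a,h_n$ are probability densities not depending on $z$ (the conditional densities of $(Y,T)$ on the always-taker and never-taker events) and $g_0,g_1$ are probability densities. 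Subtracting, $f_{(Y,T)\mid Z=1}-f_{(Y,T)\mid Z=0}=\pi_c\,(g_1-g_0)$, so
\[
TV_{(Y,T)}=\frac{\pi_c}{2}\int|g_1-g_0|\,d\mu_{(Y,T)}\le\pi_c ,
\]
using that the total variation distance between two probability densities is at most one. Combining with the first step yields $TV_{(Y,T)}\le\pi_c=TV_{T^{\ast}}$.

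The delicate point is precisely the factorization $f_{(Y,T)\mid Z=z}=\pi_a h_a+\pi_n h_n+\pi_c g_z$. Assumption~\ref{assumption1}(i) supplies independence of $Z$ from $(T_{t^{\ast}},Y_{t^{\ast}},T^{\ast}_0,T^{\ast}_1)$ only \emph{separately} for $t^{\ast}=0$ and for $t^{\ast}=1$, not joint independence of $Z$ from $(Y_0,Y_1,T_0,T_1,T^{\ast}_0,T^{\ast}_1)$; what makes the argument go through is that on the always-taker (resp.\ never-taker) event the realized $(Y,T)$ coincides with $(Y_1,T_1)$ (resp.\ $(Y_0,T_0)$), matching exactly the $t^{\ast}=1$ (resp.\ $t^{\ast}=0$) version of the assumption, while the complier block is the only one in which $z$ genuinely enters and there a crude $L^1$ bound suffices. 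In writing this up I would express the conditional densities as Radon--Nikodym derivatives, be explicit that conditioning on a compliance group is conditioning on a function of $(T^{\ast}_0,T^{\ast}_1)$, and handle the degenerate cases $\pi_c=0$ (both sides vanish) and $\pi_a=0$ or $\pi_n=0$ separately as trivial.
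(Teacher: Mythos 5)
Your proposal is correct and follows essentially the same route as the paper: decompose $f_{(Y,T)\mid Z=z}$ over the three compliance groups, use the exclusion restriction (applied separately for $t^\ast=0$ and $t^\ast=1$, exactly as you note) to show the always-taker and never-taker terms cancel so that $f_{(Y,T)\mid Z=1}-f_{(Y,T)\mid Z=0}=\pi_c(g_1-g_0)$, and then bound the total variation of the complier block by one. The paper makes the final bound explicit via the triangle inequality $\tfrac12\int|g_1-g_0|\le\tfrac12\int(g_1+g_0)=1$, which is the same estimate you invoke.
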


The first term, $TV_{(Y,T)}$, in Lemma \ref{lemma2} reflects the dependency of $f_{(Y,T)\mid Z=z}(y,t)$ on $z$, and it can be interpreted as the magnitude of the distributional effect of $Z$ on $(Y,T)$. 
The second and third terms, $TV_{T^\ast}$ and $P^{\ast}(T_0^\ast<T_1^\ast)$, are the effect of the instrument $Z$ on the true treatment $T^\ast$. 
Based on Lemma \ref{lemma2}, the magnitude of the effect  of $Z$ on $T^\ast$ is no smaller than the magnitude of  the effect  of $Z$ on $(Y,T)$. 

The new implication in Lemma \ref{lemma2} gives a lower bound on $P^{\ast}(T_0^\ast<T_1^\ast)$ and therefore yields an upper bound on the local average treatment effect in absolute value, combined with equation (\ref{eqIA}).
Therefore, I use these relationships to derive an upper bound on the local average treatment effect in absolute value, that is, 
$$
|\theta(P^{\ast})|=\frac{|\Delta E_{P^{\ast}}[Y\mid Z]|}{P^{\ast}(T_0^\ast<T_1^\ast)}\leq \frac{|\Delta E_{P^{\ast}}[Y\mid Z]|}{TV_{(Y,T)}}
$$
as long as $TV_{(Y,T)}>0$. 

Theorem \ref{theorem1} shows that the above observations characterize the sharp identified set for the local average treatment effect.   
\begin{theorem}\label{theorem1}
Suppose that Assumption \ref{assumption1} holds, and consider an arbitrary data distribution $P$ of $(Y,T,Z)$. 
The identified set $\Theta_I(P)$ for the local average treatment effect is characterized as follows: 
$\Theta_I(P)=\Theta$ if $TV_{(Y,T)}=0$; otherwise, 
$$
\Theta_I(P)
=
\begin{cases}
\left[\Delta E_P[Y\mid Z],\frac{\Delta E_P[Y\mid Z]}{TV_{(Y,T)}}\right]&\mbox{ if }\Delta E_P[Y\mid Z]>0\\
\{0\}&\mbox{ if }\Delta E_P[Y\mid Z]=0\\
\left[\frac{\Delta E_P[Y\mid Z]}{TV_{(Y,T)}},\Delta E_P[Y\mid Z]\right]&\mbox{ if }\Delta E_P[Y\mid Z]<0.
\end{cases}
$$
\end{theorem}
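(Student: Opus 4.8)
The plan is to prove the two inclusions separately: that $\Theta_I(P)$ is contained in the displayed set (the outer bound) and that every element of the displayed set is attained by some $P^{\ast}\in\mathcal P^{\ast}$ inducing $P$ (sharpness); I take $P$ to satisfy $0<P(Z=1)<1$ so that $\Delta E_P[Y\mid Z]$ is well defined. For the outer bound, fix any $P^{\ast}\in\mathcal P^{\ast}$ with $P^{\ast}_{(Y,T,Z)}=P$. Lemma \ref{lemma1} already gives that $\theta(P^{\ast})$ has the same sign as $\Delta E_P[Y\mid Z]$ and that $|\theta(P^{\ast})|\ge|\Delta E_P[Y\mid Z]|$, which supplies the sign information and the endpoint $\Delta E_P[Y\mid Z]$ of each interval. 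For the other endpoint I would combine the Imbens--Angrist identity \eqref{eqIA}, i.e.\ $\theta(P^{\ast})\,P^{\ast}(T_0^{\ast}<T_1^{\ast})=\Delta E_P[Y\mid Z]$, with Lemma \ref{lemma2}, $P^{\ast}(T_0^{\ast}<T_1^{\ast})=TV_{T^{\ast}}\ge TV_{(Y,T)}$: when $TV_{(Y,T)}>0$ this yields $|\theta(P^{\ast})|\le|\Delta E_P[Y\mid Z]|/TV_{(Y,T)}$, and if in addition $\Delta E_P[Y\mid Z]=0$ then, since $P^{\ast}(T_0^{\ast}<T_1^{\ast})>0$, the identity forces $\theta(P^{\ast})=0$. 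When $TV_{(Y,T)}=0$ there is nothing to prove beyond $\Theta_I(P)\subseteq\Theta$, which holds by definition of $\Theta_I$.

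For sharpness I would construct $P^{\ast}$ explicitly, the main device being the Jordan decomposition of the signed density $h:=f_{(Y,T)\mid Z=1}-f_{(Y,T)\mid Z=0}=h^{+}-h^{-}$ together with the overlap $\underline f:=\min\{f_{(Y,T)\mid Z=1},f_{(Y,T)\mid Z=0}\}$, for which $\int h^{+}\,d\mu_{(Y,T)}=\int h^{-}\,d\mu_{(Y,T)}=TV_{(Y,T)}$, $\int\underline f\,d\mu_{(Y,T)}=1-TV_{(Y,T)}$, $f_{(Y,T)\mid Z=1}=\underline f+h^{+}$, and $f_{(Y,T)\mid Z=0}=\underline f+h^{-}$. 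Assume first $0<TV_{(Y,T)}<1$; given a target $\theta_0$ in the displayed interval, set $p:=\Delta E_P[Y\mid Z]/\theta_0\in[TV_{(Y,T)},1]$ when $\Delta E_P[Y\mid Z]\ne 0$, and $p:=TV_{(Y,T)}$ (with $\theta_0=0$) when $\Delta E_P[Y\mid Z]=0$. Let $P^{\ast}$ put mass $p$ on compliers ($T_0^{\ast}=0<1=T_1^{\ast}$) and mass $1-p$ on never-takers ($T_0^{\ast}=T_1^{\ast}=0$); among compliers let $(Y_0,T_0)$ and $(Y_1,T_1)$ be independent with densities $g_0^{C}:=\tfrac1p h^{-}+r$ and $g_1^{C}:=\tfrac1p h^{+}+r$, where $r:=\tfrac{p-TV_{(Y,T)}}{p(1-TV_{(Y,T)})}\,\underline f$ (nonnegative, and $g_0^{C},g_1^{C}$ integrate to one since $p\ge TV_{(Y,T)}$); among never-takers let $(Y_0,T_0)$ have density $(1-TV_{(Y,T)})^{-1}\underline f$ with $(Y_1,T_1)$ arbitrary; and let $Z\sim\mathrm{Bernoulli}(P(Z=1))$ be independent of all the rest. (If $TV_{(Y,T)}=1$ the only admissible value is $p=1$, $r=0$, all mass on compliers with $g_z^{C}=f_{(Y,T)\mid Z=z}$.)

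I would then verify: Assumption \ref{assumption1} holds by the independence of $Z$, by $0<P(Z=1)<1$, and because there are no defiers; the induced law of $(Y,T,Z)$ equals $P$, since on compliers $T^{\ast}=Z$, so $(Y,T)$ equals $(Y_1,T_1)$ under $Z=1$ and $(Y_0,T_0)$ under $Z=0$, while never-takers always contribute $(Y_0,T_0)$, giving $p\,g_z^{C}+(1-p)(1-TV_{(Y,T)})^{-1}\underline f=f_{(Y,T)\mid Z=z}$; and, since the exclusion restriction makes the law of $(Y_0,Y_1)$ among compliers the one just specified, $\theta(P^{\ast})=\int y\,(g_1^{C}-g_0^{C})\,d\mu_{(Y,T)}=\tfrac1p\int y\,h\,d\mu_{(Y,T)}=\tfrac1p\Delta E_P[Y\mid Z]=\theta_0$, which is in particular a difference of means of $\mu_Y$-densities, hence in $\Theta$. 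This covers all cases with $TV_{(Y,T)}>0$. Finally, if $TV_{(Y,T)}=0$ then $f_{(Y,T)\mid Z=1}=f_{(Y,T)\mid Z=0}$ and $\Delta E_P[Y\mid Z]=0$; taking $P^{\ast}$ with all mass on never-takers, $(Y_0,T_0)\sim f_{(Y,T)}$ and $Z$ independent, reproduces $P$, makes the complier event null so that $\theta(P^{\ast})$ is unconstrained, and hence $\Theta_I(P)=\Theta$.

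The delicate part is the sharpness direction: one must exhibit a $P^{\ast}$ that is simultaneously a valid law, compatible with Assumption \ref{assumption1}, exactly matched to the observed $P$, and tuned to hit the prescribed $\theta_0$. The key realization is that the only binding constraint is $P^{\ast}(T_0^{\ast}<T_1^{\ast})\ge TV_{(Y,T)}$ and that it is exactly binding, which is precisely why the Jordan decomposition of $f_{(Y,T)\mid Z=1}-f_{(Y,T)\mid Z=0}$, renormalized by $TV_{(Y,T)}$, is the right building block. The degenerate configurations ($TV_{(Y,T)}\in\{0,1\}$, $\Delta E_P[Y\mid Z]=0$, and the null-complier convention for $\theta$) have to be carved out separately but become routine once the main construction is in hand.
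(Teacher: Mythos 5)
Your proposal is correct and follows essentially the same route as the paper: the outer bound comes from Lemmas \ref{lemma1} and \ref{lemma2} combined with the identity (\ref{eqIA}), and sharpness from an explicit construction built on the Jordan decomposition of $f_{(Y,T)\mid Z=1}-f_{(Y,T)\mid Z=0}$ (which the paper encodes via the sign variable $H$). The only cosmetic difference is that the paper realizes the interior points as convex mixtures of two extreme data-generating processes ($P_L^{\ast}$ with complier mass one and $P_U^{\ast}$ with complier mass exactly $TV_{(Y,T)}$), whereas your one-parameter family indexed by $p$ writes out precisely those mixtures directly.
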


The total variation distance $TV_{(Y,T)}$ plays two roles in determining the sharp identified set in this theorem. 
First, $TV_{(Y,T)}$ measures the strength of the instrumental variable, that is, $TV_{(Y,T)}>0$ is the relevance condition in my identification analysis. 
When $TV_{(Y,T)}>0$, the interval in the above theorem is always nonempty and bounded, which implies that $Z$ has some identifying power for the local average treatment effect. 
By contrast, $TV_{(Y,T)}=0$ means that the instrumental variable $Z$ does not affect $Y$ and $T$, in which case $Z$ has no identifying power for the local average treatment effect.  
In this case, $f_{(Y,T)\mid Z=1}=f_{(Y,T)\mid Z=0}$ almost everywhere over $(y,t)$ and particularly $\Delta E_P[Y\mid Z]=0$.
Note that all the three inequalities in Theorem \ref{theorem1} have no restriction on $\theta$ in this case. 
Second, $TV_{(Y,T)}$ determines the length of the sharp identified set. 
The length is $|\Delta E_P[Y\mid Z]|(TV_{(Y,T)}^{-1}-1)$, which is a decreasing function in $TV_{(Y,T)}$. 

In general, the lower and upper bounds of the sharp identified set are not equal to the local average treatment effect. 
The lower bound is weakly smaller (in the absolute value) than the local average treatment effect, because the size of the compliers is weakly smaller than one. 
The upper bound is weakly larger (in the absolute value) than the local average treatment effect, because $TV_{(Y,T)}$ is weakly smaller than the size of the compliers due to the mis-measurement of the treatment variable.

The standard relevance condition $\Delta E_P[T\mid Z]\ne 0$ is not required in Theorem \ref{theorem1}.
$\Delta E_P[T\mid Z]\ne 0$ is a necessary condition to define the Wald estimand, but the sharp identified set does not  depend directly on the Wald estimand. 
In fact, $TV_{(Y,T)}>0$ in Theorem \ref{theorem1} is weaker than $\Delta E_P[T\mid Z]\ne 0$. 

Note that the sharp identified set is always non-empty. 
There is no testable implications on the distribution of the observed variables, and therefore it is impossible to conduct a specification test for Assumption \ref{assumption1}.  

\subsection{Wald estimand and the identified set}\label{sec3.1}
The Wald estimand $\Delta E_P[Y\mid Z]/\Delta E_P[T\mid Z]$ can be outside the identified set.
One necessary and sufficient condition for the Wald estimand to be included in the identified set is given as follows. 
\begin{lemma}\label{Wald_lemma}
The Wald estimand is in the identified set if and only if 
\begin{equation}
f_{(Y,T)\mid Z=1}(y,1)\geq f_{(Y,T)\mid Z=0}(y,1)\mbox{ and }f_{(Y,T)\mid Z=1}(y,0)\leq f_{(Y,T)\mid Z=0}(y,0).
\label{TESTABLEIMPLICAT}
\end{equation}
\end{lemma}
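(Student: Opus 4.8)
\textbf{Plan and reduction via Theorem \ref{theorem1}.} The plan is to use Theorem \ref{theorem1} to reduce the statement ``the Wald estimand lies in $\Theta_I(P)$'' to the single scalar inequality $TV_{(Y,T)}\le\Delta E_P[T\mid Z]$, and then to recognize condition (\ref{TESTABLEIMPLICAT}) as exactly the equality case of the triangle-inequality bound $TV_{(Y,T)}\ge|\Delta E_P[T\mid Z]|$. For the reduction, suppose the Wald estimand $W=\Delta E_P[Y\mid Z]/\Delta E_P[T\mid Z]$ is well defined, so that $\Delta E_P[T\mid Z]\ne0$; then $f_{(Y,T)\mid Z=1}\ne f_{(Y,T)\mid Z=0}$ on a set of positive $\mu_Y$-measure, hence $TV_{(Y,T)}>0$ and $\Theta_I(P)$ is one of the non-trivial cases of Theorem \ref{theorem1}. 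Take $\Delta E_P[Y\mid Z]>0$, the case $\Delta E_P[Y\mid Z]<0$ being symmetric; then $W\in\Theta_I(P)$ iff $\Delta E_P[Y\mid Z]\le W\le \Delta E_P[Y\mid Z]/TV_{(Y,T)}$. Dividing through by $\Delta E_P[Y\mid Z]>0$, the left inequality is equivalent to $\Delta E_P[T\mid Z]\in(0,1]$, whose upper end is automatic because $T$ is binary, while the right inequality is equivalent to $TV_{(Y,T)}\le\Delta E_P[T\mid Z]$; and since $TV_{(Y,T)}>0$, this last inequality already forces $\Delta E_P[T\mid Z]>0$. Hence $W\in\Theta_I(P)$ iff $TV_{(Y,T)}\le\Delta E_P[T\mid Z]$.

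\textbf{Characterizing the scalar inequality.} Put $h_1(y)=f_{(Y,T)\mid Z=1}(y,1)-f_{(Y,T)\mid Z=0}(y,1)$ and $h_0(y)=f_{(Y,T)\mid Z=1}(y,0)-f_{(Y,T)\mid Z=0}(y,0)$. Since the conditional densities of $(Y,T)$ integrate to one, $\int h_1\,d\mu_Y=\Delta E_P[T\mid Z]$ and $\int h_0\,d\mu_Y=-\Delta E_P[T\mid Z]$, and the triangle inequality gives
$$
TV_{(Y,T)}=\frac{1}{2}\left(\int|h_1|\,d\mu_Y+\int|h_0|\,d\mu_Y\right)\ \ge\ \frac{1}{2}\left(\left|\int h_1\,d\mu_Y\right|+\left|\int h_0\,d\mu_Y\right|\right)=|\Delta E_P[T\mid Z]|,
$$
with equality if and only if each of $h_1$ and $h_0$ has a.e.\ constant sign. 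Thus if $TV_{(Y,T)}\le\Delta E_P[T\mid Z]$, then combining with $TV_{(Y,T)}\ge|\Delta E_P[T\mid Z]|$ forces $\Delta E_P[T\mid Z]\ge0$ and equality in the display, so $h_1$ and $h_0$ are a.e.\ of constant sign; as $\int h_1\,d\mu_Y=\Delta E_P[T\mid Z]\ge0$ and $\int h_0\,d\mu_Y\le0$, that sign is $h_1\ge0$ and $h_0\le0$ almost everywhere, which is precisely (\ref{TESTABLEIMPLICAT}). Conversely, under (\ref{TESTABLEIMPLICAT}) we get $\int|h_1|\,d\mu_Y=\int h_1\,d\mu_Y=\Delta E_P[T\mid Z]$ and $\int|h_0|\,d\mu_Y=-\int h_0\,d\mu_Y=\Delta E_P[T\mid Z]$, so $TV_{(Y,T)}=\Delta E_P[T\mid Z]$ and in particular the inequality holds. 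Chaining this equivalence with the reduction step proves the lemma.

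\textbf{Main obstacle.} Conceptually the argument is short once the reduction is set up; the part that needs care is the bookkeeping around degenerate configurations---making sure the Wald estimand is well defined (so $\Delta E_P[T\mid Z]\ne0$ and $TV_{(Y,T)}>0$) and keeping the sign cases of $\Delta E_P[Y\mid Z]$, in particular the collapsed case $\Delta E_P[Y\mid Z]=0$ with $\Theta_I(P)=\{0\}$, consistent with the stated equivalence---rather than anything substantive in the inequality manipulations themselves.
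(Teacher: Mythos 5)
Your proof is correct and follows essentially the same route as the paper's: both rest on the triangle-inequality bound $TV_{(Y,T)}\ge|\Delta E_P[T\mid Z]|$, whose equality case (constant sign of $\Delta f_{(Y,T)\mid Z}(y,t)$ in $y$ for each $t$) is exactly condition (\ref{TESTABLEIMPLICAT}); you merely make explicit the reduction, via Theorem \ref{theorem1}, of membership of the Wald estimand in $\Theta_I(P)$ to the scalar inequality $TV_{(Y,T)}\le\Delta E_P[T\mid Z]$, a step the paper leaves implicit. The one configuration neither treatment fully disposes of is $\Delta E_P[Y\mid Z]=0$ with $\Delta E_P[T\mid Z]\ne 0$, where the Wald estimand equals $0\in\Theta_I(P)=\{0\}$ irrespective of (\ref{TESTABLEIMPLICAT}); you flag this in your closing remarks without resolving it, and the paper's proof is silent on it as well.
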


This condition in (\ref{TESTABLEIMPLICAT}) are the testable implications from the the local average treatment effect framework without measurement error (\citealp{balke/pearl:1997} and \citealp{heckman/vytlacil:2005}). 
The recent papers by \cite{huber/mellace:2014}, \cite{kitagawa:2014}, and \cite{mourifie/wan:2014} propose the testing procedures for  (\ref{TESTABLEIMPLICAT}). 
Based on the results in Theorem \ref{theorem1}, their testing procedures are re-interpreted as a test for the null hypothesis that the Wald estimand is inside the sharp upper bound on the local average treatment effect.\footnote{Unfortunately, (\ref{TESTABLEIMPLICAT}) cannot be used for testing the existence of a measurement error. Even if there is non-zero measurement error, (\ref{TESTABLEIMPLICAT}) can still hold.}

\subsection{Conditional exogeneity of the instrumental variable}
As in \cite{abadie:2003} and \cite{frolich:2007}, this section considers the conditional exogeneity of the instrumental variable $Z$ in which $Z$ is exogenous given a set of covariates $V$, which weaker than the unconditional exogeneity in Assumption \ref{assumption1}.  
\begin{assumption}\label{assumption1condi}
There is some variable $V$ taking values in a set $\mathbf{V}$ satisfying the following properties. 
(i) For each $t^\ast=0,1$, $Z$ is conditionally independent of $(T_{t^\ast},Y_{t^\ast},T^\ast_{0},T^\ast_{1})$ given $V$.
(ii) $T^\ast_{1}\geq T^\ast_{0}$ almost surely.
(iii) $0<P(Z=1\mid V)<1$. 
\end{assumption} 

I define the $V$-conditional total variation distance by 
$$
TV_{X\mid V}=\frac{1}{2}\int |f_{X\mid Z=1,V}(x)-f_{X\mid Z=0,V}(x)|d\mu_X(x).
$$
Note that $TV_{X\mid V}$ is a random variable as a function of $V$. 
Under the conditional exogeneity of $Z$, Theorem \ref{theorem1} becomes as follows. 
\begin{theorem}\label{theorem1conditional}
Suppose that Assumption \ref{assumption1condi} holds, and consider an arbitrary data distribution $P$ of $(Y,T,Z,V)$. 
The identified set $\Theta_I(P)$ for the local average treatment effect is characterized as follows: $\Theta_I(P)=\Theta$ if $E_P[TV_{(Y,T)\mid V}]=0$; otherwise, 
$$
\Theta_I(P)
=
\begin{cases}
\left[E_P[\Delta E_P[Y\mid Z,V]],\frac{E_P[\Delta E_P[Y\mid Z,V]]}{E_P[TV_{(Y,T)\mid V}]}\right]&\mbox{ if }E_P[\Delta E_P[Y\mid Z,V]]>0\\
\{0\}&\mbox{ if }E_P[\Delta E_P[Y\mid Z,V]]=0\\
\left[\frac{E_P[\Delta E_P[Y\mid Z,V]]}{TV_{(Y,T)}},E_P[\Delta E_P[Y\mid Z,V]]\right]&\mbox{ if }E_P[\Delta E_P[Y\mid Z,V]]<0.
\end{cases}
$$
\end{theorem}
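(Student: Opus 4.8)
This result follows from Theorem \ref{theorem1} by applying the latter inside each stratum $\{V=v\}$ and then aggregating over $V$. Under Assumption \ref{assumption1condi}(i)--(ii), the assumptions of Section \ref{sec2} hold for the conditional law of $(Y_0,Y_1,T_0,T_1,T_0^\ast,T_1^\ast,Z)$ given $V=v$ for $P$-almost every $v$, so Lemma \ref{lemma2} and equation (\ref{eqIA}) are both available stratum by stratum.

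\textbf{Outer bound.} Conditioning (\ref{eqIA}) on $V=v$ and writing $\theta(P^\ast)$ as a complier-weighted average of the conditional LATEs gives the aggregated identity
$$
\theta(P^\ast)\,P^\ast(T_0^\ast<T_1^\ast)=E_{P^\ast}\big[\Delta E_{P^\ast}[Y\mid Z,V]\big]=E_P\big[\Delta E_P[Y\mid Z,V]\big],
$$
so $\theta(P^\ast)$ has the sign of $E_P[\Delta E_P[Y\mid Z,V]]$. Next, Lemma \ref{lemma2} applied conditionally gives $TV_{(Y,T)\mid V}\le P^\ast(T_0^\ast<T_1^\ast\mid V)$ almost surely; taking expectations over $V$ and using $P^\ast(T_0^\ast<T_1^\ast\mid V)\le 1$ yields $E_P[TV_{(Y,T)\mid V}]\le P^\ast(T_0^\ast<T_1^\ast)\le 1$. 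Substituting this two-sided bound into $\theta(P^\ast)=E_P[\Delta E_P[Y\mid Z,V]]/P^\ast(T_0^\ast<T_1^\ast)$ reproduces the interval in the statement. If instead $E_P[TV_{(Y,T)\mid V}]=0$, then $TV_{(Y,T)\mid V}=0$ a.s., which forces $\Delta E_P[Y\mid Z,V]=0$ a.s.\ and leaves $\theta(P^\ast)$ unrestricted, matching $\Theta_I(P)=\Theta$ (under the convention of Theorem \ref{theorem1} regarding the absence of compliers).

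\textbf{Sharpness.} For the reverse inclusion, fix $\theta_0$ in the claimed set and choose a measurable $\pi:\mathbf V\to[0,1]$ with $TV_{(Y,T)\mid V=v}\le\pi(v)\le 1$. Inside each stratum $\{V=v\}$ I invoke the sharpness half of Theorem \ref{theorem1} with the conditional data law $P_{(Y,T,Z)\mid V=v}$ and the target conditional LATE $\Delta E_P[Y\mid Z,V=v]/\pi(v)$, which belongs to $\Theta_I(P_{(Y,T,Z)\mid V=v})$ precisely because $\pi(v)\ge TV_{(Y,T)\mid V=v}$; this yields a conditional latent law $P^\ast(\,\cdot\mid V=v)$ that satisfies Assumption \ref{assumption1} conditionally, induces $P_{(Y,T,Z)\mid V=v}$, and has conditional complier probability $\pi(v)$. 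Gluing these conditional laws to the marginal of $V$ produces $P^\ast\in\mathcal P^\ast$ with $P^\ast_{(Y,T,Z,V)}=P$ and, by the aggregated identity, $\theta(P^\ast)=E_P[\Delta E_P[Y\mid Z,V]]/E_P[\pi(V)]$. Choosing $\pi(V)=\lambda\,TV_{(Y,T)\mid V}+(1-\lambda)$ makes $E_P[\pi(V)]$ sweep $[E_P[TV_{(Y,T)\mid V}],1]$ continuously as $\lambda$ ranges over $[0,1]$, so some $\lambda$ delivers $\theta(P^\ast)=\theta_0$; the degenerate subcases ($E_P[\Delta E_P[Y\mid Z,V]]=0$, which forces $\theta_0=0$, and $E_P[TV_{(Y,T)\mid V}]=0$, where a strictly positive $\pi$ with conditional LATE $\theta_0$ in each stratum works) are treated directly.

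\textbf{Main obstacle.} Given Theorem \ref{theorem1}, the argument is conceptually routine and the real work is measure-theoretic. I expect the principal difficulty to be carrying out the stratum-wise construction \emph{measurably} in $v$: one needs a regular conditional distribution $v\mapsto P^\ast(\,\cdot\mid V=v)$ and must patch the pieces into a genuine joint law on $(Y_0,Y_1,T_0,T_1,T_0^\ast,T_1^\ast,Z,V)$, which amounts to checking that the construction used in the proof of Theorem \ref{theorem1} depends measurably on $P_{(Y,T,Z)\mid V=v}$ and on $\pi(v)$. A secondary nuisance is the bookkeeping on degenerate strata (where $TV_{(Y,T)\mid V=v}=0$, hence $\Delta E_P[Y\mid Z,V=v]=0$) and verifying that $\pi$ can be kept strictly positive wherever $\Delta E_P[Y\mid Z,V]\neq 0$, so that every conditional LATE invoked is well defined.
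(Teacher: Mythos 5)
Your argument is correct and is, in substance, the paper's own: the paper proves Theorems \ref{theorem1}, \ref{theorem1conditional} and \ref{theorem4} simultaneously via an integrated result whose outer bound is exactly your aggregation of equation (\ref{eqIA}) and the conditional version of Lemma \ref{lemma2} (its Lemma \ref{lemma4}), and whose sharpness half constructs two explicit latent laws $P_L^{\ast}$ (everyone a complier, LATE $=E_P[\Delta E_P[Y\mid Z,V]]$) and $P_U^{\ast}$ (compliers selected by the sign of $\Delta f_{(Y,T)\mid Z,V}$, LATE equal to the upper endpoint) and sweeps the interval with the mixtures $\lambda P_L^{\ast}+(1-\lambda)P_U^{\ast}$ --- which, conditionally on $V=v$, is precisely your one-parameter family with complier share $\pi(v)=\lambda+(1-\lambda)TV_{(Y,T)\mid V=v}$. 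Two remarks on where your packaging differs. First, your appeal to ``the sharpness half of Theorem \ref{theorem1}'' is not quite enough as stated: the theorem's statement only tells you which LATE values are attainable, whereas you need a conditional latent law with a \emph{prescribed} complier probability $\pi(v)$; this is a property of the construction inside the proof of Theorem \ref{theorem1}, not of its statement, so you must open that proof up rather than cite the theorem as a black box. Second, the measurable-gluing obstacle you flag is real but is dissolved in the paper by defining $P_L^{\ast}$ and $P_U^{\ast}$ globally in terms of the conditional densities $f_{(Y,T)\mid Z,V}$ and the indicator $H=.5\times\mathrm{sgn}(\Delta f_{(Y,T)\mid Z,V})$, which are measurable in $v$ by construction; writing the construction that way from the start is the cleaner route and is what you should do to close your proof.
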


\subsection{Identifying power of repeated measurements}
The identification strategy in the above analysis offers a new use of repeated measurements as additional sources for identification. 
Repeated measurements \citep[for example,][]{hausman/ichimura/newey/powell:1991} is a popular approach in the literature on measurement error, but they cannot be instrumental variables in this framework. 
This is because the true treatment $T^\ast$ is endogenous and it is natural to suspect that a measurement of $T^\ast$ is also endogenous.
The more accurate the measurement is, the more likely it is to be endogenous.  
Nevertheless, the identification strategy incorporates repeated measurements as an additional information to tighten the identified set for the local average treatment effect, when they are coupled with the instrumental variable $Z$.
Unlike the other paper on repeated measurements, I do not need to assume the independence of measurement errors among multiple measurements.
The strategy also benefits from having more than two measurements unlike \cite{hausman/ichimura/newey/powell:1991} who achieve point identification with two measurements.

Consider a repeated measurement $R$ for $T^\ast$.
I do not require that $R$ is binary, so $R$ can be discrete or continuous. 
Like $T=T_{T^\ast}$, I model $R$ using the counterfactual outcome notations. 
$R_1$ is a counterfactual second measurement when the true treatment $T^\ast$ is $1$, and $R_0$ is a counterfactual second measurement when the true treatment $T^\ast$ is $0$. 
Then the data generation of $R$ is 
$$
R=R_{T^\ast}.
$$ 
I strengthen  Assumption \ref{assumption1} by assuming that the instrumental variable $Z$ is independent of $R_{t^\ast}$ conditional on $(Y_{t^\ast},T_{t^\ast},T^\ast_{0},T^\ast_{1})$. 
\begin{assumption}\label{assumption8}
(i) $Z$ is independent of $(R_{t^\ast},T_{t^\ast},Y_{t^\ast},T^\ast_{0},T^\ast_{1})$ for each $t^\ast=0,1$.
(ii) $T^\ast_{1}\geq T^\ast_{0}$ almost surely.
(iii) $0<P(Z=0)<1$. 
\end{assumption} 
Note that I do not assume the independence between $R_{t^\ast}$ and $T_{t^\ast}$, where the independence between the measurement errors is a key assumption when the repeated measurement is an instrumental variable. 
Assumption \ref{assumption8} tightens the identified set for the local average treatment effect as follows.

The requirement on $R$ does not restrict $R$ to have the same support as $T^\ast$. 
In fact, $R$ can be any variable which depends on $T^\ast$. 
For example, $R$ can be another outcome variable than $Y$. 

\begin{theorem}\label{theorem4}
Suppose that Assumption \ref{assumption8} holds, and consider an arbitrary data distribution $P$ of $(R,Y,T,Z)$. 
The identified set $\Theta_I(P)$ for the local average treatment effect is characterized as follows: 
$\Theta_I(P)=\Theta$ if $TV_{(R,Y,T)}=0$; otherwise, 
$$
\Theta_I(P)
=
\begin{cases}
\left[\Delta E_P[Y\mid Z],\frac{\Delta E_P[Y\mid Z]}{TV_{(R,Y,T)}}\right]&\mbox{ if }\Delta E_P[Y\mid Z]>0\\
\{0\}&\mbox{ if }\Delta E_P[Y\mid Z]=0\\
\left[\frac{\Delta E_P[Y\mid Z]}{TV_{(R,Y,T)}},\Delta E_P[Y\mid Z]\right]&\mbox{ if }\Delta E_P[Y\mid Z]<0.
\end{cases}
$$
\end{theorem}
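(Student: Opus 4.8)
The plan is to reuse the two-step template behind Theorem \ref{theorem1}, replacing the pair $(Y,T)$ by the triple $(R,Y,T)$ throughout; the extra coordinate $R$ enters only through the total variation distance and is otherwise inert. Since Assumption \ref{assumption8} implies Assumption \ref{assumption1}, Lemma \ref{lemma1} and the Imbens--Angrist identity (\ref{eqIA}) remain available, and these already give the sign restriction, the lower bound $|\theta(P^{\ast})|\ge|\Delta E_P[Y\mid Z]|$, and the equation $\theta(P^{\ast})\,P^{\ast}(T_0^\ast<T_1^\ast)=\Delta E_P[Y\mid Z]$.

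For the upper bound I would first establish the triple-variable analogue of Lemma \ref{lemma2}, $TV_{(R,Y,T)}\le P^{\ast}(T_0^\ast<T_1^\ast)$. Monotonicity partitions the population into never-takers, always-takers and compliers; conditioning on $Z=z$ and invoking the exclusion restriction in Assumption \ref{assumption8}(i), the density $f_{(R,Y,T)\mid Z=z}$ is a mixture over the three types in which the never-taker and always-taker components are $z$-free and the complier component equals the law of $(R_0,Y_0,T_0)$ when $z=0$ and the law of $(R_1,Y_1,T_1)$ when $z=1$. Differencing yields $f_{(R,Y,T)\mid Z=1}-f_{(R,Y,T)\mid Z=0}=P^{\ast}(T_0^\ast<T_1^\ast)\,(c_1-c_0)$ with $c_0,c_1$ probability densities, and taking $L^1$ norms (a total variation distance is at most one) gives the inequality. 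Combined with (\ref{eqIA}) this delivers $|\theta(P^{\ast})|\le|\Delta E_P[Y\mid Z]|/TV_{(R,Y,T)}$ whenever $TV_{(R,Y,T)}>0$, so $\Theta_I(P)$ is contained in the displayed set; the case $TV_{(R,Y,T)}=0$ is handled exactly as the corresponding case of Theorem \ref{theorem1}.

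For sharpness, fix a target $\theta^\dagger$ in the displayed set and, by the symmetry of the positive and negative cases, suppose $\Delta E_P[Y\mid Z]>0$ (when $\Delta E_P[Y\mid Z]=0$ every admissible $P^{\ast}$ has $\theta(P^{\ast})=\Delta E_P[Y\mid Z]/P^{\ast}(T_0^\ast<T_1^\ast)=0$ because $P^{\ast}(T_0^\ast<T_1^\ast)\ge TV_{(R,Y,T)}>0$, so only a single such $P^{\ast}$ need be exhibited). Put $p_C=\Delta E_P[Y\mid Z]/\theta^\dagger$, which lies in $[\,TV_{(R,Y,T)},1\,]$ precisely because $\theta^\dagger$ lies in the displayed interval. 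Using the Hahn--Jordan decomposition $f_{(R,Y,T)\mid Z=1}-f_{(R,Y,T)\mid Z=0}=\delta^+-\delta^-$ and the overlap $h_0=\min\{f_{(R,Y,T)\mid Z=1},f_{(R,Y,T)\mid Z=0}\}$ (whose mass is $1-TV_{(R,Y,T)}$), I would take $Z$ independent of all latent variables with the observed marginal, assign complier probability $p_C$ with $(R_0,Y_0,T_0)\sim c_0:=\delta^-/p_C+g$ and $(R_1,Y_1,T_1)\sim c_1:=\delta^+/p_C+g$ where $g=\tfrac{p_C-TV_{(R,Y,T)}}{1-TV_{(R,Y,T)}}\cdot\tfrac{h_0}{p_C}$, and split the residual mass $1-p_C$ into always-takers and never-takers whose realized-state densities are both $h_0/(1-TV_{(R,Y,T)})$, filling the unobserved counterfactuals in each type arbitrarily. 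A routine check then shows $g\ge 0$, that $c_0,c_1$ are probability densities, that $p_Cc_0\le f_{(R,Y,T)\mid Z=0}$ and $p_Cc_1\le f_{(R,Y,T)\mid Z=1}$ pointwise, and hence that this $P^{\ast}$ satisfies Assumption \ref{assumption8} and induces $P$; equation (\ref{eqIA}) then forces $\theta(P^{\ast})=\Delta E_P[Y\mid Z]/p_C=\theta^\dagger$, and $\theta^\dagger\in\Theta$ since it is a difference of means of $\mu_Y$-densities.

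The step I expect to be the crux is this last construction: one must verify simultaneously, for every admissible $p_C$, that all mixture components and all mixing weights are nonnegative and correctly normalized. What makes this tractable rather than delicate is the observation that, once $P^{\ast}$ is forced to reproduce the observed law of $(R,Y,T,Z)$, identity (\ref{eqIA}) determines $\theta(P^{\ast})$ through the single scalar $P^{\ast}(T_0^\ast<T_1^\ast)$; the sharpness problem therefore collapses to realizing every value of that scalar in $[TV_{(R,Y,T)},1]$, and the Hahn--Jordan decomposition of $f_{(R,Y,T)\mid Z=1}-f_{(R,Y,T)\mid Z=0}$ is tailor-made for this. A minor loose end is to confirm that the displayed interval lies inside $\Theta$ (immediate when $Y$ is bounded), so that the displayed set and its intersection with $\Theta$ coincide.
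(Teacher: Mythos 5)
Your proposal is correct and follows essentially the same route as the paper: the bounds come from Lemma \ref{lemma1}, identity (\ref{eqIA}), and the triple-variable analogue of Lemma \ref{lemma2} (the paper's Lemma \ref{lemma4}), and the sharpness construction rests on the Hahn--Jordan decomposition of $f_{(R,Y,T)\mid Z=1}-f_{(R,Y,T)\mid Z=0}$, which the paper encodes through the sign variable $H=.5\times\mathrm{sgn}(\Delta f_{(R,Y,T)\mid Z,V})$. The only cosmetic difference is that you parametrize the family of admissible data generating processes directly by the complier probability $p_C\in[TV_{(R,Y,T)},1]$, whereas the paper builds the two extreme processes $P_L^{\ast}$ and $P_U^{\ast}$ and takes convex mixtures $\lambda P_L^{\ast}+(1-\lambda)P_U^{\ast}$; setting $\lambda=(p_C-TV_{(R,Y,T)})/(1-TV_{(R,Y,T)})$ shows the two families induce the same complier marginals, so the constructions coincide.
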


The identified set in Theorem \ref{theorem4} is weakly smaller than the identified set in Theorem \ref{theorem1}. 
The total variation distance $TV_{(R,Y,T)}$ in Theorem \ref{theorem4} is weakly larger than that in Theorem \ref{theorem1}, because, using the triangle inequality, 
\begin{eqnarray*}
TV_{(R,Y,T)}
&=&
\frac{1}{2}\sum_{t=0,1}\iint|(f_{(R,Y,T)\mid Z=1}-f_{(R,Y,T)\mid Z=0})(r,y,t)|d\mu_R(r)d\mu_Y(y)\\
&\geq&
\frac{1}{2}\sum_{t=0,1}\int|\int(f_{(R,Y,T)\mid Z=1}-f_{(R,Y,T)\mid Z=0})(r,y,t)d\mu_R(r)|d\mu_Y(y)\\
&=&
\frac{1}{2}\sum_{t=0,1}\int|(f_{(Y,T)\mid Z=1}-f_{(Y,T)\mid Z=0})(y,t)|d\mu_Y(y)\\
&=&
TV_{(Y,T)}
\end{eqnarray*}
and the strict inequality holds unless the sign of $(f_{(R,Y,T)\mid Z=1}-f_{(R,Y,T)\mid Z=0})(r,y,t)$ is constant in $r$ for every $(y,t)$.
Therefore, it is possible to test whether the repeated measurement $R$ has additional information, by testing whether the sign of $(f_{(R,Y,T)\mid Z=1}-f_{(R,Y,T)\mid Z=0})(r,y,t)$ is constant in $r$.

\subsection{Dependence between measurement error and instrumental variable}\label{sec3Less}
It is still possible to apply the same identification strategy and obtain finite (but less tight) bounds on the local average treatment effect, even without the independence between the instrumental variable and the measurement error. (Assumption \ref{assumption1} (i) implies that $Z$ is independent of $T_{t^\ast}$ for each $t^\ast=0,1$.) 
Instead Assumption \ref{assumption1} is weakened to allow for the measurement error $T_{t^\ast}$ to be correlated with the instrumental variable $Z$.  
\begin{assumption}\label{assumption1-less}
(i) $Z$ is independent of $(Y_{t^\ast},T^\ast_{0},T^\ast_{1})$ for each $t^\ast=0,1$.
(ii) $T^\ast_{1}\geq T^\ast_{0}$ almost surely.
(iii) $0<P(Z=0)<1$. 
\end{assumption} 

Theorem \ref{theorem1-less} shows that the above observations characterize the identified set for the local average treatment effect under Assumption \ref{assumption1-less}.   
\begin{theorem}\label{theorem1-less}
Suppose that Assumption \ref{assumption1-less} holds, and consider an arbitrary data distribution $P$ of $(Y,T,Z)$. 
The identified set $\Theta_I(P)$ for the local average treatment effect is characterized as follows: 
$\Theta_I(P)=\Theta$ if $TV_Y=0$; otherwise, 
$$
\Theta_I(P)
=
\begin{cases}
\left[\Delta E_P[Y\mid Z],\frac{\Delta E_P[Y\mid Z]}{TV_Y}\right]&\mbox{ if }\Delta E_P[Y\mid Z]>0\\
\{0\}&\mbox{ if }\Delta E_P[Y\mid Z]=0\\
\left[\frac{\Delta E_P[Y\mid Z]}{TV_Y},\Delta E_P[Y\mid Z]\right]&\mbox{ if }\Delta E_P[Y\mid Z]<0.
\end{cases}
$$
\end{theorem}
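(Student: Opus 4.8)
The plan is to prove Theorem~\ref{theorem1-less} by the same two-step route used for Theorem~\ref{theorem1}: an ``outer'' step showing that $\Theta_I(P)$ is contained in the asserted set, and an ``inner'' step exhibiting, for every candidate value $\theta^\dagger$ in the asserted set, an admissible $P^\ast$ with $P^\ast_{(Y,T,Z)}=P$ and $\theta(P^\ast)=\theta^\dagger$. The only conceptual change relative to Theorem~\ref{theorem1} is that Assumption~\ref{assumption1-less} no longer restricts the dependence of $T_{t^\ast}$ on $Z$, so the measured treatment carries no identifying content and the distance $TV_{(Y,T)}$ is everywhere replaced by $TV_Y$.

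For the outer step I would first note that equation~(\ref{eqIA}), and hence Lemma~\ref{lemma1}, carry over unchanged, because their derivation uses only $Y=Y_{T^\ast}$, $T^\ast=T^\ast_Z$, monotonicity, and the independence of $Z$ from $(Y_0,Y_1,T^\ast_0,T^\ast_1)$, all of which Assumption~\ref{assumption1-less} retains; this gives the sign restriction and $|\theta(P^\ast)|\ge|\Delta E_P[Y\mid Z]|$. Next I would establish the analogue of Lemma~\ref{lemma2}, namely $TV_Y\le P^\ast(T^\ast_0<T^\ast_1)$: partitioning by complier type and using Assumption~\ref{assumption1-less}(i)--(ii) to remove the conditioning on $Z$ in the type-specific outcome densities yields $f_{Y\mid Z=1}-f_{Y\mid Z=0}=P^\ast(T^\ast_0<T^\ast_1)\,(f_{Y_1\mid c}-f_{Y_0\mid c})$, where $c$ denotes the complier event, so taking $L^1$ norms and bounding the total variation distance between two densities by $1$ gives the claim. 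Combined with~(\ref{eqIA}) this produces $|\theta(P^\ast)|\le|\Delta E_P[Y\mid Z]|/TV_Y$ when $TV_Y>0$, and forces $\theta(P^\ast)=0$ when $\Delta E_P[Y\mid Z]=0$ (since then $P^\ast(T^\ast_0<T^\ast_1)\ge TV_Y>0$); when $TV_Y=0$ there is nothing to show.

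For the inner step, write $p_z$ for the observed density of $Y$ given $Z=z$ and treat the case $\Delta E_P[Y\mid Z]>0$ (the negative case being symmetric, the zero case trivial since then $\theta(P^\ast)=\Delta E_P[Y\mid Z]/\pi_c=0$ for any admissible choice of $\pi_c$). Set the complier probability $\pi_c=\Delta E_P[Y\mid Z]/\theta^\dagger$; membership of $\theta^\dagger$ in the asserted interval is exactly equivalent to $\pi_c\in[TV_Y,1]$. Decompose $p_1-p_0=h_+-h_-$ into positive and negative parts (each of total mass $TV_Y$) and define complier potential-outcome densities $g_1=\pi_c^{-1}h_++s$, $g_0=\pi_c^{-1}h_-+s$, where the nonnegative overlap term $s$ is taken proportional to $\min(p_0,p_1)$ with total mass $1-TV_Y/\pi_c$; the inequality $\pi_c\ge TV_Y$ is precisely what makes $s$ a legitimate sub-density and makes the non-complier residual $m:=p_0-\pi_c g_0=\min(p_0,p_1)-\pi_c s$ pointwise nonnegative with total mass $1-\pi_c$. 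I would then draw the type $(T^\ast_0,T^\ast_1)$ independently of $Z$ with complier probability $\pi_c$ and the remaining mass placed (say) entirely on never-takers with outcome density $m/(1-\pi_c)$, give compliers any coupling of potential outcomes with marginals $(g_0,g_1)$, and generate $T$ by sampling, conditionally on $(Y,Z)=(y,z)$ alone, from $f_{(Y,T)\mid Z=z}(y,\cdot)/p_z(y)$. One then checks directly that $(Y_0,Y_1,T^\ast_0,T^\ast_1)\perp Z$, that monotonicity holds, that the induced law of $(Y,T,Z)$ equals $P$, and that $\theta(P^\ast)=\int y\,(g_1-g_0)\,d\mu_Y=\Delta E_P[Y\mid Z]/\pi_c=\theta^\dagger$.

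The main obstacle is this construction, and the single genuinely delicate point inside it is verifying that the overlap term $s$ (equivalently the non-complier residual $m$) is simultaneously nonnegative at every point and carries exactly mass $1-\pi_c$; this is where the constraint $\pi_c\ge TV_Y$ must be used sharply, and it is what pins down $\Delta E_P[Y\mid Z]/TV_Y$ as the upper endpoint of the identified set. The handling of $T$ is by comparison routine: because Assumption~\ref{assumption1-less} places no restriction on the dependence of $T_{t^\ast}$ on $Z$, drawing $T$ from its observed conditional law given $(Y,Z)$ alone is admissible, which is exactly why the relevant distance here is $TV_Y$ rather than $TV_{(Y,T)}$.
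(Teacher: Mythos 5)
Your argument is correct in substance and follows the same two-step architecture as the paper: the outer containment via Lemma \ref{lemma1} and the analogue $TV_Y\le P^{\ast}(T_0^\ast<T_1^\ast)$ of Lemma \ref{lemma2}, and sharpness via explicit construction of admissible distributions in which $T$ is drawn from its observed conditional law given $(Y,Z)$ --- exactly the device the paper uses (the factor $f_{T\mid Y=y,Z=z}(t)$ in its modified $P_U^{\ast}$), and exactly why $TV_{(Y,T)}$ degrades to $TV_Y$. The one genuine difference is how the interior of the interval is realized: the paper constructs only the two extremal processes $P_L^{\ast}$ (all compliers, attaining $\Delta E_P[Y\mid Z]$) and $P_U^{\ast}$ (complier share $TV_Y$, built from $H=.5\,\mathrm{sgn}(\Delta f_{Y\mid Z}(Y))$, attaining $\Delta E_P[Y\mid Z]/TV_Y$) and then takes mixtures $\lambda P_L^{\ast}+(1-\lambda)P_U^{\ast}$, whereas you parametrize directly by the complier share $\pi_c=\Delta E_P[Y\mid Z]/\theta^\dagger$ and interpolate with the overlap term $s\propto\min(p_0,p_1)$ of mass $1-TV_Y/\pi_c$. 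Your Jordan-decomposition construction checks out (nonnegativity of $s$ is exactly $\pi_c\ge TV_Y$, the residual $m=(1-\pi_c c)\min(p_0,p_1)$ is nonnegative with mass $1-\pi_c$, and both $p_0=\pi_cg_0+m$ and $p_1=\pi_cg_1+m$ hold), and at $\pi_c=TV_Y$ it reduces to the paper's $P_U^{\ast}$; the mixture route avoids re-verifying feasibility for each $\pi_c$, while yours avoids the mixture bookkeeping.

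One corner case is left unproved: when $TV_Y=0$ the theorem asserts $\Theta_I(P)=\Theta$, and your inner step does not deliver this. In that case $p_1=p_0$, so $\Delta E_P[Y\mid Z]=0$ and your construction (any $\pi_c>0$) only produces $\theta=0$, i.e., it shows $\{0\}\subseteq\Theta_I(P)$ rather than $\Theta\subseteq\Theta_I(P)$. You need the paper's degenerate construction (Case 1 of Theorem \ref{intergreated_thereom1}): set $(T_0^\ast,T_1^\ast)=(1,1)$ almost surely so that the complier event is null, and assign the conditional laws of $Y_1$ and $Y_0$ given that null event to be arbitrary densities $f_1$ and $f_0$; this reproduces $P$ while making the local average treatment effect equal to any prescribed $\int y(f_1-f_0)\,d\mu_Y\in\Theta$. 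This is an easy fix but it is a required part of the statement, so it should be stated explicitly.
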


The difference from Theorem \ref{theorem1} is that Theorem \ref{theorem1-less} does not depend on the measured treatment $T$. 
Although it is observed in the dataset,  $T$ does not have any information on the local average treatment effect because Assumption \ref{assumption1-less} does not restrict $T$. 
When $TV_Y>0$, there are nontrivial upper and lower bounds on the local average treatment effect even without using the measured treatment $T$.

\section{Inference}\label{sec4}
Based on the sharp identified set in the presence of covariates (Theorem \ref{theorem1conditional}), this section constructs a confidence interval for the local average treatment effect based on an i.i.d. sample $\{W_i:1\leq i\leq n\}$ of $W=(Y,T,Z,V)$. 
The confidence interval described below controls the asymptotic size uniformly over a class of data generating processes, and rejects all the fixed alternatives. 

The identified set in \ref{theorem1conditional} is characterized by moment inequalities as follows. 
\begin{lemma}\label{lemma3}
Let $P$ be an arbitrary data distribution of $W=(Y,T,Z,V)$. 
Under Assumption \ref{assumption1condi}, 
$\Theta_I(P)$ is the set of $\theta\in\Theta$ in which
\begin{eqnarray}
&&E_P\left[-\frac{Z-\pi(V)}{\pi(V)(1-\pi(V))}\mathrm{sgn}(\theta)Y\right]\leq 0\label{ID_Cond1}\\
&&E_P\left[\frac{Z-\pi(V)}{\pi(V)(1-\pi(V))}\mathrm{sgn}(\theta)Y-|\theta|\right]\leq 0\label{ID_Cond2}\\
&&E_P\left[\frac{Z-\pi(V)}{\pi(V)(1-\pi(V))}\left(|\theta|h(Y,T,V)-\mathrm{sgn}(\theta)Y\right)\right]\leq 0\mbox{ for all }h\in\mathbf{H}\label{ID_Cond3},
\end{eqnarray}
where $\pi(V)=P(Z=1\mid V)$, $\mathbf{H}$ is the set of measurable functions on $\mathbf{Y}\times\mathbf{T}\times\mathbf{V}$ taking a value in $\{-.5,.5\}$ and $\mathrm{sgn}(x)\equiv\mathbbm{1}\{x\geq 0\}-\mathbbm{1}\{x<0\}$. 
\end{lemma}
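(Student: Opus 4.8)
The plan is to reduce the three families of moment inequalities in (\ref{ID_Cond1})--(\ref{ID_Cond3}) to the case-by-case description of $\Theta_I(P)$ already established in Theorem \ref{theorem1conditional}, using a single propensity-score reweighting identity together with a pointwise maximization over $\mathbf{H}$. First I would record the Abadie-type identity: for any $P$-integrable $g(Y,T,V)$ and any $V$ with $0<\pi(V)<1$ (Assumption \ref{assumption1condi}(iii)),
\begin{equation*}
E_P\!\left[\frac{Z-\pi(V)}{\pi(V)(1-\pi(V))}\,g(Y,T,V)\,\Big|\,V\right]=E_P[g(Y,T,V)\mid Z=1,V]-E_P[g(Y,T,V)\mid Z=0,V],
\end{equation*}
which is pure algebra: conditional on $V$, the weight equals $1/\pi(V)$ on $\{Z=1\}$ and $-1/(1-\pi(V))$ on $\{Z=0\}$, and $E_P[\mathbbm{1}\{Z=z\}g\mid V]=P(Z=z\mid V)E_P[g\mid Z=z,V]$. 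Taking $V$-expectations, each moment in (\ref{ID_Cond1})--(\ref{ID_Cond3}) becomes a statement about $E_P[\Delta E_P[\,\cdot\mid Z,V]]$; applying it with $g=Y$ shows that (\ref{ID_Cond1}) reads $\mathrm{sgn}(\theta)E_P[\Delta E_P[Y\mid Z,V]]\ge 0$ (the conditional sign restriction) and (\ref{ID_Cond2}) reads $\mathrm{sgn}(\theta)E_P[\Delta E_P[Y\mid Z,V]]\le|\theta|$, which, combined with (\ref{ID_Cond1}), is exactly $|\theta|\ge|E_P[\Delta E_P[Y\mid Z,V]]|$ with $\mathrm{sgn}(\theta)$ matching the sign of $E_P[\Delta E_P[Y\mid Z,V]]$ when the latter is nonzero.

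Next I would handle (\ref{ID_Cond3}). By the identity, applied with $g=|\theta|h-\mathrm{sgn}(\theta)Y$, it is equivalent to requiring, for every $h\in\mathbf{H}$, that $|\theta|E_P[\Delta E_P[h(Y,T,V)\mid Z,V]]\le\mathrm{sgn}(\theta)E_P[\Delta E_P[Y\mid Z,V]]$; since $|\theta|\ge0$, the whole family is equivalent to the single inequality obtained by replacing the left-hand side by its supremum over $\mathbf{H}$. The key computation is that this supremum equals $E_P[TV_{(Y,T)\mid V}]$: writing $d_v(y,t)=f_{(Y,T)\mid Z=1,V=v}(y,t)-f_{(Y,T)\mid Z=0,V=v}(y,t)$, the choice $h^\ast(y,t,v)=\tfrac12\,\mathrm{sgn}(d_v(y,t))$ lies in $\mathbf{H}$ (measurable and $\{-\tfrac12,\tfrac12\}$-valued) and maximizes $\sum_{t=0,1}\int h(y,t,v)d_v(y,t)\,d\mu_Y(y)$ pointwise in $v$, with value $\tfrac12\sum_{t=0,1}\int|d_v(y,t)|\,d\mu_Y(y)=TV_{(Y,T)\mid V=v}$, while every other $h\in\mathbf{H}$ gives a value no larger for each $v$; integrating over $V$ yields the claim. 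Hence (\ref{ID_Cond3}) collapses to $|\theta|E_P[TV_{(Y,T)\mid V}]\le\mathrm{sgn}(\theta)E_P[\Delta E_P[Y\mid Z,V]]$, and once (\ref{ID_Cond1}) is imposed the right-hand side equals $|E_P[\Delta E_P[Y\mid Z,V]]|$.

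Finally I would assemble the three reductions and verify they reproduce Theorem \ref{theorem1conditional} case by case. When $E_P[\Delta E_P[Y\mid Z,V]]>0$ the reduced system forces $\theta>0$ and gives $E_P[\Delta E_P[Y\mid Z,V]]\le\theta\le E_P[\Delta E_P[Y\mid Z,V]]/E_P[TV_{(Y,T)\mid V}]$ when $E_P[TV_{(Y,T)\mid V}]>0$; the symmetric statement holds when the $\Delta$-term is negative; when it is zero and $E_P[TV_{(Y,T)\mid V}]>0$ the third inequality forces $\theta=0$; and when $E_P[TV_{(Y,T)\mid V}]=0$ one has $TV_{Y\mid V}\le TV_{(Y,T)\mid V}=0$ a.s., hence $\Delta E_P[Y\mid Z,V]=0$ a.s., so all three moment conditions hold for every $\theta\in\Theta$. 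These are precisely the sets in Theorem \ref{theorem1conditional}. I expect the main obstacle to be the measurable-selection step inside the supremum computation: one must pass to regular conditional distributions so that the densities $f_{(Y,T)\mid Z=z,V=v}$, and hence $d_v$, can be taken jointly measurable in $(y,t,v)$, which is what guarantees $h^\ast\in\mathbf{H}$ and that the pointwise-in-$v$ optima integrate up to $E_P[TV_{(Y,T)\mid V}]$; the boundedness of $h$ and integrability of $Y$ under the maintained assumptions make the remaining interchange of supremum and integration routine.
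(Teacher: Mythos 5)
Your proposal is correct and follows essentially the same route as the paper: the Abadie-type reweighting identity $E_P[\Delta E_P[X\mid Z,V]]=E_P\bigl[\tfrac{Z-\pi(V)}{\pi(V)(1-\pi(V))}X\bigr]$, the identification of $\sup_{h\in\mathbf{H}}\Delta E_P[h(Y,T,V)\mid Z,V]$ with $TV_{(Y,T)\mid V}$ via the pointwise maximizer $h^\ast=\tfrac12\,\mathrm{sgn}(\Delta f_{(Y,T)\mid Z,V})$, and reduction to the previously established characterization of $\Theta_I(P)$. The only cosmetic difference is that the paper cites the three-inequality form of Theorem \ref{intergreated_thereom1} directly, while you re-derive the equivalence case by case from Theorem \ref{theorem1conditional}; these are the same argument.
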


I construct a $(1-\alpha)$-confidence interval for the local average treatment effect $\theta$ with treating $\pi$ as a nuisance parameter for given $\alpha\in(0,0.5)$. 
I assume that a $(1-\delta)$-confidence interval $\mathcal{C}_{\pi,n}(\delta)$ for $\pi$ is available for researchers for given $\delta\in(0,\alpha)$.
Given $\mathcal{C}_{\pi,n}(\delta)$, I construct the $(1-\alpha-\delta)$-confidence interval $\mathcal{C}_{\theta,n}(\alpha+\delta)$ for the local average treatment effect as
$$
\mathcal{C}_{\theta,n}(\alpha+\delta)=\bigcup_{\pi\in\mathcal{C}_{\pi,n}(\delta)}\{\theta\in\Theta: T(\theta,\pi)\leq c(\alpha,\theta,\pi)\},
$$
where $T(\theta,\pi)$ and $c(\alpha,\theta,\pi)$ are defined below using the bootstrap-based testing \citep{chernozhukov/chetverikov/kato:2014}. 

The number of the moment inequalities in Lemma \ref{lemma3} can be finite or infinite, which determines whether some of the existing methods can be applied directly to the inference on the local average treatment effect. 
When $(Y,V)$ has finite supports and therefore $\mathbf{H}$ is finite, the sharp identified set is characterized by a finite number of inequalities, and therefore I can apply inference methods based on unconditional moment inequalities.\footnote{The literature on conditional and unconditional moment inequality models is broad and growing. See \cite{canay/shaikh:2016} for a recent survey on this literature.}
To the best of my knowledge, however, inference for the local average treatment effect in my framework does not fall directly into the existing moment inequality models when either $Y$ or $V$ is continuous. When either $Y$ or $V$ is continuous, the sharp identified set is characterized by an uncountably infinite number of inequalities. In the current literature on the partially identified parameters, an infinite number of moment inequalities are mainly considered in the context of conditional moment inequalities. The identified set in this paper is not characterized by conditional moment inequalities.\footnote{\cite{chernozhukov/lee/rosen:2013} also considers an infinite number of unconditional moment inequalities in which the moment functions are continuously indexed by a compact subset in a finite dimensional space. It is not straightforward to verify the continuity condition (Condition C.1 in their paper) for the moment inequalities in Lemma \ref{lemma3}, in which the moment functions need to be continuously indexed by a compact subset of the finite dimensional space.}\footnote{\cite{andrews/shi:2016} considers an infinite number of unconditional moment inequalities in which the moment functions satisfies manageability condition. I cannot apply their approach here because $h(Y,T,V)$ takes discrete values in $\{-.5,.5\}$ and then the packing numbers depends on the sample size.} 

I considers a sequence of finite sets $\mathbf{H}_n$ which converges to $\mathbf{H}$ as a sample size increases. (The convergence is formally defined in Assumption \ref{H_conditions}, and an example for $\mathbf{H}_n$ appears after Assumption \ref{H_conditions}.)   
Note that, when $\mathbf{H}$ is finite, $\mathbf{H}_n$ can be equal to $\mathbf{H}$.
If $\mathbf{H}$ is replaced with $\mathbf{H}_n$ in Lemma \ref{lemma3}, the number of the moment inequalities becomes finite. 
At the same time, as $\mathbf{H}_n$ approaches to $\mathbf{H}$, the approximation error from using $\mathbf{H}_n$ converges to zero, and the number of the inequalities can be increasing, particularly diverging to the infinity when $\mathbf{H}$ includes infinite elements.
The approximated identified set is characterized by a finite number of the following moment inequalities: 
\begin{eqnarray}
&&E_P\left[-\frac{Z-\pi(V)}{\pi(V)(1-\pi(V))}\mathrm{sgn}(\theta)Y\right]\leq 0\label{Approx1}\\
&&E_P\left[\frac{Z-\pi(V)}{\pi(V)(1-\pi(V))}\mathrm{sgn}(\theta)Y-|\theta|\right]\leq 0\label{Approx2}\\
&&E_P\left[\frac{Z-\pi(V)}{\pi(V)(1-\pi(V))}\left(|\theta|h(Y,T,V)-\mathrm{sgn}(\theta)Y\right)\right]\leq 0\mbox{ for all }h\in\mathbf{H}_n.\label{Approx3}
\end{eqnarray}
Denote by $p_n$ the resulting number of moment inequalities, that is, the number of elements in $\mathbf{H}_n$ plus $2$.  
Note that, when $K_n=1$, the moment inequalities in (\ref{Approx3}) is equivalent to using the Wald estimand as the upper bound for $|\theta|$.

For the size $\alpha\in(0,.5)$, I construct a test statistic $T(\theta,\pi)$ and a critical value $c(\alpha,\theta,\pi)$ via the multiplier bootstrap \citep{chernozhukov/chetverikov/kato:2014} for many moment inequality models (described in Section \ref{CCKbootstrap}).\footnote{In this paper I focus on the one-step multiplier bootstrap in \citep{chernozhukov/chetverikov/kato:2014}. It is also possible to use the two- or three-step empirical/multiplier bootstrap in this paper, but I do not compare them because the comparison of these methods is above the scope of this paper. }
\cite{chernozhukov/chetverikov/kato:2014} studies the testing problem for moment inequality models in which the number of the moment inequalities is finite but growing. 
Since the number of the moment inequalities in (\ref{Approx1})-(\ref{Approx3}) is finite but growing, their results are applicable to construct a confidence interval based on (\ref{Approx1})-(\ref{Approx3}). 

\begin{assumption}\label{asympt_assumption}
Given positive constants $C_2$ and $\eta$, the class of data generating processes, denoted by $\mathcal{P}_0$, and the parameter spaces $\Theta\times\Pi$ satisfy  
\begin{itemize}
\item[(i)] $\max\{E_P[Y^3]^{2/3},E_P[Y^4]^{1/2}\}<C_2$,
\item[(ii)] $\Theta\subset\mathbb{R}$ is bounded, 
\item[(iii)] The random variable inside $E_P$ in (\ref{Approx1})-(\ref{Approx3}) has a non-zero variance for every $j=1,\ldots,p_n$ and every $\theta\in\Theta$,
\item[(iv)] $\liminf_{n\rightarrow\infty}\inf_{P\in\mathcal{P}_0}P(\pi(P)\in \mathcal{C}_{\pi,n}(\delta))\geq 1-\delta$,
\item[(v)] $\eta<\pi(V)<1-\eta$ for every $\pi\in\Pi$.
\end{itemize}
\end{assumption}
The first assumption (i) is a regularity condition. 
The second assumption (ii) requires researchers to know ex ante upper and lower bounds on the parameter. 
The third assumption (iii) guarantees that the test statistic is well-defined.
The fourth assumption (iv) is that the confidence interval for $\pi$ controls the size uniformly over $\mathcal{P}_0$.
The last assumption (v) is that the propensity score $\pi(v)=P(Z=1\mid V=v)$ is bounded away from zero and one.

In this paper I assume that $\{\mathbf{H}_n\}$ satisfies the following conditions. 
\begin{assumption}\label{H_conditions}  
(i) $\mathbf{H}_n\subset\mathbf{H}_{n+1}$. (ii) The convergence  
\begin{equation}\label{approx_vanish}
\sup_{h\in\mathbf{H}}E_P\left[\frac{Z-\pi(V)}{\pi(V)(1-\pi(V))}h(Y,T,V)\right]-\max_{h\in\mathbf{H}_n}E_P\left[\frac{Z-\pi(V)}{\pi(V)(1-\pi(V))}h(Y,T,V)\right]\rightarrow 0
\end{equation}
holds uniformly over $\pi\in\Pi$ and $P\in\mathcal{P}_0$.
(iii) The number of elements in $\mathbf{H}_n$ satisfies 
\begin{equation}\label{number_moments}
\log^{7/2}(p_nn)\leq C_1n^{1/2-c_1}\mbox{ and }\log^{1/2}p_n\leq C_1n^{1/2-c_1}
\end{equation}
for some $c_1\in (0,1/2)$ and $C_1>0$.
\end{assumption}

An example of $\mathbf{H}_n$ is obtained by discretizing $Y$. 
Consider a partition $I_{n,1},\ldots,I_{n,K_n}$ over $\mathbf{Y}\times\mathbf{T}\times\mathbf{V}$, in which the intervals $I_{n,k}$ and the grid size $K_n$ depend on the sample size $n$.
Let $h_{n,j}$ be a generic function of $\mathbf{Y}\times\mathbf{T}\times\mathbf{V}$ into $\{-.5,.5\}$ that is constant over $I_{n,k}$ for every $1\leq k\leq K_n$.
Let $\mathbf{H}_n=\{h_{n,1},\ldots,h_{n,2^{K_n}}\}$ be the set of all such functions.
Lemma \ref{H_example} shows that this construction of $\mathbf{H}_n$ satisfies Eq. (\ref{approx_vanish}) under conditions on $I_{n,k}$ and $f_{(Y,T)\mid Z=z}$.
The conditions in Lemma \ref{H_example} guarantee that the approximation error from the discretization vanishes as the sample size $n$ increases.

It is worthwhile to mention that, when $K_n=1$, the implied upper bound in (\ref{Approx3}) is equal to the Wald estimand. 
It can be smaller than the Wald estimand as long as $K_n\geq 2$, 

\begin{lemma}\label{H_example}
Assumption \ref{H_conditions} holds if 
\begin{itemize}
\item[(i)] the partition $I_{n+1,1},\ldots,I_{n+1,K_{n+1}}$ is a refinement of the partition $I_{n,1},\ldots,I_{n,K_n}$;
\item[(ii)] $p_n=2^{K_n}+2$ satisfies (\ref{number_moments});
\item[(iii)] there is a positive constant $D_1$ such that $I_{n,k}$ is a subset of some open ball with radius $D_1/K_n$ in $\mathbf{Y}\times\mathbf{T}\times\mathbf{V}$;  and 
\item[(iv)] the density function $f_{(Y,T)\mid Z=z,V}$ is H\"older continuous in $(y,t,v)$ with the H\"older constant $D_0$ and exponent $d$.
\end{itemize}
\end{lemma}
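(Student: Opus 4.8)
The plan is to verify the three clauses of Assumption~\ref{H_conditions} one at a time. Clause~(i) is immediate: if the partition underlying $\mathbf{H}_{n+1}$ refines the one underlying $\mathbf{H}_n$ (Lemma condition~(i)), then any function constant on $I_{n,1},\dots,I_{n,K_n}$ is constant on the finer cells as well, so $\mathbf{H}_n\subseteq\mathbf{H}_{n+1}$. Clause~(iii) is Lemma condition~(ii) verbatim, since $p_n=2^{K_n}+2$. All the content is in clause~(ii), the uniform vanishing in~\eqref{approx_vanish}.

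The first step is to rewrite the relevant functional. Fix $P\in\mathcal{P}_0$ and $\pi\in\Pi$, and write $\pi_0=\pi(P)$. Splitting on $Z\in\{0,1\}$ and using a change of measure,
$$
E_P\!\left[\tfrac{Z-\pi(V)}{\pi(V)(1-\pi(V))}\,h(Y,T,V)\right]=\int h\,d\lambda_{P,\pi},
$$
where $\lambda_{P,\pi}$ is the finite signed measure on $\mathbf{Y}\times\mathbf{T}\times\mathbf{V}$ with density $g_{P,\pi}(y,t,v)=w_1(v)f_{(Y,T)\mid Z=1,V=v}(y,t)-w_0(v)f_{(Y,T)\mid Z=0,V=v}(y,t)$ relative to $\nu=\mu_Y\otimes(\text{counting on }\mathbf{T})\otimes P_V$, with $w_1(v)=\pi_0(v)/\pi(v)$ and $w_0(v)=(1-\pi_0(v))/(1-\pi(v))$. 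Since $\pi,\pi_0\in(\eta,1-\eta)$, the weights $w_0,w_1$ are bounded above and below by constants depending only on $\eta$, hence $\|g_{P,\pi}\|_{L^1(\nu)}$ is bounded uniformly over $\mathcal{P}_0\times\Pi$. Because every $h\in\mathbf{H}$ takes values in $\{-\tfrac12,\tfrac12\}$, the optimal choice $h=\tfrac12\,\mathrm{sgn}(g_{P,\pi})$ gives $\sup_{h\in\mathbf{H}}\int h\,d\lambda_{P,\pi}=\tfrac12\|g_{P,\pi}\|_{L^1(\nu)}$; and since $\mathbf{H}_n$ consists of all $\{-\tfrac12,\tfrac12\}$-valued functions constant on $I_{n,1},\dots,I_{n,K_n}$, taking the optimal sign on each cell gives $\max_{h\in\mathbf{H}_n}\int h\,d\lambda_{P,\pi}=\tfrac12\sum_{k=1}^{K_n}|\lambda_{P,\pi}(I_{n,k})|$. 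Thus the left side of~\eqref{approx_vanish} equals $\tfrac12\sum_{k=1}^{K_n}\big(\int_{I_{n,k}}|g_{P,\pi}|\,d\nu-|\lambda_{P,\pi}(I_{n,k})|\big)$, the error made in approximating $\|g_{P,\pi}\|_{L^1(\nu)}$ by a partition-coarsened version.

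The second step bounds this coarsening error. For each cell let $\bar g_k=\lambda_{P,\pi}(I_{n,k})/\nu(I_{n,k})$; then $|\lambda_{P,\pi}(I_{n,k})|=\int_{I_{n,k}}|\bar g_k|\,d\nu$, so by the reverse triangle inequality the $k$-th summand is at most $\int_{I_{n,k}}|g_{P,\pi}-\bar g_k|\,d\nu\le\nu(I_{n,k})\cdot\operatorname{osc}_{I_{n,k}}(g_{P,\pi})$. By Lemma conditions~(iii) and~(iv), each $I_{n,k}$ lies in a ball of radius $D_1/K_n$, which for $n$ large meets only a single $t$-slice, so the H\"older bound applies on it and $\operatorname{osc}_{I_{n,k}}(g_{P,\pi})=O(K_n^{-d})$ uniformly in $P,\pi$. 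Summing over cells inside a large bounded block $B$ gives a contribution $O(K_n^{-d})\,\nu(B)$ with $\nu(B)<\infty$ uniformly, because $P_V$ is a probability measure; summing over cells meeting $B^c$, each summand is at most $\int_{I_{n,k}}|g_{P,\pi}|\,d\nu$, whose total is bounded by $\|g_{P,\pi}\|_{L^1(\nu)}$ restricted to a slight enlargement of $B^c$, which the moment restriction in Assumption~\ref{asympt_assumption} (controlling the tails of the conditional densities, up to the $\eta$-bounded weights) makes uniformly small once $B$ is large. Sending $B\uparrow$ and then $n\to\infty$ gives~\eqref{approx_vanish} uniformly over $\mathcal{P}_0$ and $\Pi$.

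The main obstacle is precisely the uniformity over $P$ and $\pi$, not the pointwise statement: since the partitions are nested and their mesh tends to zero, $g_{P,\pi}$ is the $\nu$-conditional expectation of itself given the cell $\sigma$-field in the limit, and pointwise convergence in~\eqref{approx_vanish} would follow from $L^1$ martingale convergence alone. For uniformity two points need care. First, the weights $w_0,w_1$ are only assumed bounded, not continuous, so one must ensure that the oscillation estimate for $g_{P,\pi}$ is driven by the H\"older continuity of the conditional densities and not by $w_0,w_1$; this is where one invokes regularity of the class $\Pi$ (or argues within each fixed $v$, where $w_0(v),w_1(v)$ are constants). Second, because $\mu_Y$ need not be a finite measure, controlling the $B^c$-tail of $\|g_{P,\pi}\|_{L^1(\nu)}$ uniformly is a uniform-integrability requirement on the family of conditional densities, which is exactly what the moment bounds in Assumption~\ref{asympt_assumption} supply (together, if $\mathbf{V}$ is unbounded, with a tail condition on $V$). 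The remaining steps are routine bookkeeping.
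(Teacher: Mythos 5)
Your proof is correct and, at its core, takes the same route as the paper's: clauses (i) and (iii) of Assumption \ref{H_conditions} are dispatched exactly as you do, and the substance of clause (ii) is in both cases an $O(K_n^{-d})$ bound on the discretization error obtained from conditions (iii) and (iv) via the H\"older continuity of the conditional densities. The packaging differs. The paper works with the explicit optimizers $h_P^{\ast}=.5\,\mathrm{sgn}(\Delta f_{(Y,T)\mid Z,V})$ over $\mathbf{H}$ and $h_{P,n}^{\ast}$ over $\mathbf{H}_n$ and argues cell by cell that either the sign of $\Delta f_{(Y,T)\mid Z,V}$ is constant on $I_{n,k}$ (so the two optimizers agree there) or $|\Delta f_{(Y,T)\mid Z,V}|\leq D_n=2D_0(2D_1/K_n)^d$ throughout $I_{n,k}$ (so the wrong sign costs at most $D_n$ pointwise); you instead bound the per-cell loss by $\nu(I_{n,k})$ times the oscillation of the signed density via the cell average $\bar g_k$. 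These are equivalent estimates, and in both cases the sum over cells is controlled because the reference measure is finite ($\mu_Y$ is assumed finite and the $V$-marginal is a probability measure), so your truncation-by-$B$ and tail step is actually unnecessary here. Where you genuinely go beyond the paper is in treating general $\pi\in\Pi$: you correctly observe that for $\pi\neq P(Z=1\mid V=\cdot)$ the relevant signed density is $w_1f_{(Y,T)\mid Z=1,V}-w_0f_{(Y,T)\mid Z=0,V}$ with weights $w_1=\pi_0/\pi$ and $w_0=(1-\pi_0)/(1-\pi)$, whose oscillation is not controlled by condition (iv) alone, so the uniformity over $\pi\in\Pi$ demanded in (\ref{approx_vanish}) requires some smoothness of the class $\Pi$. (Your parenthetical fallback of arguing ``within each fixed $v$'' does not rescue this, since the cells $I_{n,k}$ aggregate over $v$ and $\mathbf{H}_n$ cannot re-optimize the sign separately at each $v$.) The paper's own proof silently sets $\pi=\pi_0$ — it identifies the functional with $E_P[\Delta E_P[h\mid Z,V]]$, which is valid only at the true propensity score — so the loose end you flag is a genuine one in the stated result, shared by (indeed ignored in) the paper's argument, rather than a defect specific to your proof.
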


Theorem \ref{fixed_alternative} shows asymptotic properties of the confidence interval $\mathcal{C}_{\theta,n}(\alpha+\delta)$. The first result (i) is the uniform asymptotic size control and the second result (ii) is the consistency against all the fixed alternatives.
\begin{theorem}\label{fixed_alternative}
Suppose that Assumptions \ref{asympt_assumption} and \ref{H_conditions} hold.
(i) The confidence interval controls the asymptotic size uniformly: 
$$
\liminf_{n\rightarrow\infty}\inf_{P\in\mathcal{P}_0, \theta\in\Theta_I(P)}P(\theta\in \mathcal{C}_{\theta,n}(\alpha+\delta))\geq 1-\alpha-\delta,
$$ 
(ii) If Eq. (\ref{approx_vanish}) holds, the confidence interval excludes all the fixed alternatives: 
$$
\lim_{n\rightarrow\infty}P(\theta\in \mathcal{C}_{\theta,n}(\alpha+\delta))=0\mbox{ for every }(\theta,P)\in\Theta\times\mathcal{P}_0\mbox{ with }\theta\not\in\Theta_I(P).
$$
\end{theorem}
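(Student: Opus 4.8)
The plan is to reduce both claims to the results of \cite{chernozhukov/chetverikov/kato:2014} on moment inequality models with a growing number of inequalities, applied at the \emph{true} propensity score $\pi_0=\pi(P)$, and then to absorb the unknown $\pi$ through the union over $\mathcal{C}_{\pi,n}(\delta)$ by a Bonferroni argument. Recall that $T(\theta,\pi)$ and $c(\alpha,\theta,\pi)$ are the studentized one-sided maximum and its multiplier-bootstrap quantile for the $p_n$ sample analogues of \eqref{Approx1}--\eqref{Approx3} (Section~\ref{CCKbootstrap}), so the first step is to check that the regularity conditions of \cite{chernozhukov/chetverikov/kato:2014} hold \emph{uniformly} over $P\in\mathcal{P}_0$, $\theta\in\Theta$, $\pi\in\Pi$. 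Every moment function is of the form $\frac{Z-\pi(V)}{\pi(V)(1-\pi(V))}\,g(Y,T,V;\theta,h)$ with $g$ affine in $Y$, with $|\theta|$ bounded over $\Theta$ (Assumption~\ref{asympt_assumption}(ii)) and $h\in\{-.5,.5\}$; by Assumption~\ref{asympt_assumption}(v) the weight $\frac{Z-\pi(V)}{\pi(V)(1-\pi(V))}$ is bounded by $1/\eta$, so the third and fourth moments of every moment function are dominated by $E_P[|Y|^3]$ and $E_P[|Y|^4]$, which are uniformly bounded by Assumption~\ref{asympt_assumption}(i); the self-normalized statistic is well-defined by Assumption~\ref{asympt_assumption}(iii); and the growth restriction \eqref{number_moments} in Assumption~\ref{H_conditions}(iii) is exactly what controls the bootstrap coupling error along the diverging sequence $p_n$.

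For part (i), I would fix $P\in\mathcal{P}_0$ and $\theta\in\Theta_I(P)$. By Lemma~\ref{lemma3} the system \eqref{ID_Cond1}--\eqref{ID_Cond3} holds at $(\theta,\pi_0)$, and since $\mathbf{H}_n\subseteq\mathbf{H}$ (Assumption~\ref{H_conditions}(i)) the finite subsystem \eqref{Approx1}--\eqref{Approx3} also holds, i.e.\ all $p_n$ population moments are nonpositive at $(\theta,\pi_0)$. Uniform validity of the multiplier bootstrap then gives $\limsup_n\,\sup_{P\in\mathcal{P}_0,\,\theta\in\Theta_I(P)}P\big(T(\theta,\pi_0)>c(\alpha,\theta,\pi_0)\big)\le\alpha$. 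On the event $\{\pi_0\in\mathcal{C}_{\pi,n}(\delta)\}$ the inclusion $\{T(\theta,\pi_0)\le c(\alpha,\theta,\pi_0)\}\subseteq\{\theta\in\mathcal{C}_{\theta,n}(\alpha+\delta)\}$ holds by construction of the union, so $P(\theta\notin\mathcal{C}_{\theta,n}(\alpha+\delta))\le P(\pi_0\notin\mathcal{C}_{\pi,n}(\delta))+P(T(\theta,\pi_0)>c(\alpha,\theta,\pi_0))$, and combining this with Assumption~\ref{asympt_assumption}(iv) gives $\liminf_n\inf_{P,\,\theta\in\Theta_I(P)}P(\theta\in\mathcal{C}_{\theta,n}(\alpha+\delta))\ge 1-\alpha-\delta$.

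For part (ii), I would fix $(\theta,P)$ with $\theta\notin\Theta_I(P)$. By Lemma~\ref{lemma3}, at $(\theta,\pi_0)$ at least one of \eqref{ID_Cond1}--\eqref{ID_Cond3} is violated by a strictly positive margin $v_0>0$. If the violated inequality is \eqref{ID_Cond1} or \eqref{ID_Cond2} it is already violated among \eqref{Approx1}--\eqref{Approx3} for every $n$; if it is \eqref{ID_Cond3}, then, rewriting it as $|\theta|\sup_{h\in\mathbf{H}}E_P[\,\cdot\,]$ minus a term that does not depend on $h$ and invoking the uniform approximation \eqref{approx_vanish} (Assumption~\ref{H_conditions}(ii)) together with $|\theta|<\infty$, the maximum over $\mathbf{H}_n$ of that moment exceeds $v_0/2$ for all $n$ large. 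Hence, for large $n$, some population moment among the $p_n$ inequalities is at least $v_0/2$ at $(\theta,\pi_0)$, so the corresponding sample moment is at least $v_0/4$ with probability tending to one, which forces $T(\theta,\pi_0)$ to diverge at rate $\sqrt n$; since $c(\alpha,\theta,\pi_0)=O_P(\sqrt{\log p_n})$ and $\log p_n=o(n)$ by \eqref{number_moments}, the test rejects at $\pi_0$ with probability tending to one. Finally, since the moment functions are continuous in $\pi$ uniformly on $\{\eta<\pi<1-\eta\}$, the population violation persists on a neighborhood of $\pi_0$, and since (for fixed $P$) $\mathcal{C}_{\pi,n}(\delta)$ shrinks to $\{\pi_0\}$, the rejection holds simultaneously for every $\pi\in\mathcal{C}_{\pi,n}(\delta)$, so $\theta\notin\mathcal{C}_{\theta,n}(\alpha+\delta)$ with probability tending to one.

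I expect the main obstacle to be the first step: making the conditions of \cite{chernozhukov/chetverikov/kato:2014} hold \emph{uniformly} over $\mathcal{P}_0\times\Theta\times\Pi$ and along the diverging $p_n$, i.e.\ controlling the self-normalized moments and the bootstrap coupling error uniformly in $\theta$ and in the nuisance $\pi$. A secondary difficulty, specific to part (ii), is reconciling the union over $\mathcal{C}_{\pi,n}(\delta)$ with pointwise rejection at $\pi_0$: this is where uniform continuity of the moments in $\pi$ (Assumption~\ref{asympt_assumption}(v)) and concentration of the nuisance confidence set are needed, and the argument would break if $\mathcal{C}_{\pi,n}(\delta)$ could contain propensity values at which the false $\theta$ spuriously satisfies the moment inequalities.
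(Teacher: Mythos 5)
Your proposal follows the paper's own proof essentially step for step: the paper also verifies the \cite{chernozhukov/chetverikov/kato:2014} regularity conditions via Assumptions \ref{asympt_assumption} and \ref{H_conditions}, proves part (i) by the identical Bonferroni decomposition $P(\theta\not\in\mathcal{C}_{\theta,n})\leq P(T(\theta,\pi)>c(\alpha,\theta,\pi))+P(\pi\not\in\mathcal{C}_{\pi,n}(\delta))$ at the true propensity score, and proves part (ii) by showing that a violation of (\ref{ID_Cond1})--(\ref{ID_Cond3}) of size $\kappa>0$ propagates, via (\ref{approx_vanish}) and the bound $\sigma_j<D_2$, into the local-alternative condition $\max_j m_j(\theta)/\sigma_j(\theta)\geq(1+\varepsilon_n)\sqrt{2\log(p_n/\alpha)/n}$ required by their power result. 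The one place you go beyond the paper is the last step of part (ii): the paper establishes rejection only at the true $\pi_0$ and stops there, whereas you correctly observe that $\mathcal{C}_{\theta,n}(\alpha+\delta)$ is a union over $\pi\in\mathcal{C}_{\pi,n}(\delta)$, so that excluding $\theta$ requires rejection at \emph{every} $\pi$ in the nuisance confidence set; your proposed fix (uniform continuity in $\pi$ plus shrinkage of $\mathcal{C}_{\pi,n}(\delta)$ to $\{\pi_0\}$) is reasonable but rests on a concentration property of $\mathcal{C}_{\pi,n}(\delta)$ that Assumption \ref{asympt_assumption}(iv) does not actually guarantee, so this step is a genuine addition rather than something you can take for granted from the stated hypotheses.
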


\section{Empirical illustrations}\label{sec5.1}
This section studies the effects of 401(k) participation on financial savings using the inference method in Section \ref{sec4}. 
I introduce a measurement error problem to the analysis of \cite{abadie:2003}, which investigates the local average treatment effect using the eligibility for 401(k) program. 
The robustness to misclassification is empirically relevant, because the retirement pension plan type is subject to a measurement error in survey datasets.
Using the Health and Retirement Study, for example, \cite{gustman/steinmeier/tabatabai:2007} estimate that around one fourth of the survey respondents misclassified their pension plan type. 

The dataset in my analysis is from the Survey of Income and Program Participation (SIPP) of 1991. It has been used in various analyses, e.g., \cite{poterba/venti/wise:1995} and \cite{abadie:2003}.
I follow the data construction in \cite{abadie:2003}. 
The sample consists of households in which at least one person is employed, which has no income from self-employment, and whose annual family income is between \$10,000 and \$200,000.
The resulting sample size is 9,275.

The outcome variable $Y$ is the net financial assets, the measured treatment variable $T$ is the self-reported participation in 401(k), $Z$ is the eligibility for 401(k) and $R$ is the participation in an individual retirement account (IRA). The control variables $V$ includes constant, family income, age and its square, marital status, and family size. I compute the summary statistics for these variables in Table \ref{table_summary}. 
The 401(k) participation can be endogenous, because participants in 401(k) might be more informed or plan more about retirement savings than non-participants. 
To control for the endoeneity problem, this paper uses 401(k) eligibility as an instrumental variable.

I use the linear probability model for the regression of the instrumental variable $Z$ on the control variables $V$, that is, 
$E[Z\mid V]=\pi(V)=V'\beta_{\pi}$. 
For a comparison purpose, I compute the Wald estimator, $\$16,290$, with a 95\% bootstrapped confidence interval $[5,976,\  27,611]$.\footnote{The Wald estimator here is the sample analogue of $E\left[\frac{Z-\pi(V)}{\pi(V)(1-\pi(V))}Y\right]/E\left[\frac{Z-\pi(V)}{\pi(V)(1-\pi(V))}T\right]$ with $\pi$ being estimated in the linear probability model.}
The intent-to-treat effect $\Delta E[Y\mid Z]$ is estimated as  $\$10,981$ with  a 95\% bootstrapped confidence interval $[4,169,\  18,558]$

Table \ref{table_empri1}-\ref{table_empri3} shows that the confidence intervals for the local average treatment effect, under different assumptions (Theorem \ref{theorem1}, \ref{theorem4}, \ref{theorem1-less}, respectively).\footnote{I use 2000 draws for the bootstrap and set $\beta=0.1\%$ for the moment selection and $\delta=1\%$ for the estimation of $\beta_{\pi}$. See Appendix A for the multiplier bootstrap critical value.}
The confidence intervals in these tables are robust to a misclassification of the treatment variable. 
They are wider than the 95\% confidence interval $[5,976,\  27,611]$ for the Wald estimator, but are in general comparable to the confidence interval for the Wald estimator.
The confidence intervals in this exercise do not shrink as $K_n$ increases from $1$ to $5$. (Note that, when $K_n=1$, the moment inequalities in (\ref{Approx3}) is equivalent to using the Wald estimand as the upper bound for $|\theta|$.)
This is possibly because the data generation process does not violate the conditions in (\ref{TESTABLEIMPLICAT}) to a large extent, and therefore the Wald estimand is close (even if not equal) to the sharp upper bound for the local average treatment effect. 

Table \ref{table_empri2} summarizes the confidence intervals with the IRA participation $R$ as an additional measurement, as discussed in Theorem \ref{theorem4}. 
It shows similar values to Table \ref{table_empri1} and it can be interpreted as a result that the IRA participation $R$ has only little identifying power on the local average treatment effect in this empirical exercise. 

Table \ref{table_empri3} summarizes the confidence intervals without using the measured treatment $T$, as in Theorem \ref{theorem1-less}. 
The lower bound of the confidence intervals does not change from those in Table \ref{table_empri1}, because the lower bound of the identified set does not change without the information from the measured treatment $T$. 
The upper bound is 3-4 times larger than those in Table \ref{table_empri1}, which is the cost of not using $T$.\footnote{When $K_n=1$, $TV_Y$ becomes zero and then there is no finite upper bound on the local average treatment effect.}

\begin{table}
\centering
\begin{tabular}{l|cc}
\hline 
\hline 
&mean&standard deviation\\
\hline
$Y$: family net financial assets & 19,071 &  63,963\\
$T$: 401(k) participation &     0.2762  &  0.4472\\
$Z$: IRA participation &     0.3921  &  0.4883\\
$R$: 401(k) eligibility &     0.2543  &  0.4355\\
    \hline
\end{tabular}
\caption{Summary statistics for $Y$, $T$, and $Z$}
\label{table_summary}
\bigskip\bigskip
\begin{tabular}{c|rlrl}
\hline 
$K_n$ & \multicolumn{2}{c}{90\% CI}  & \multicolumn{2}{c}{95\% CI} \\
\hline
1 & [4,899 &   26,741]  &  [3,626&   28,945]\\
2 & [4,883 &   26,994]  &  [3,619&   29,077]\\
3 & [4,860 &  27,002]   & [3,596&   29,124]\\
4 & [4,851 &   27,171]  &  [3,595&   29,302]\\
    \hline
\end{tabular}
\caption{90\% and 95\% confidence intervals for the local average treatment effect for different $K_n$. (Based on Theorem \ref{theorem1})}
\label{table_empri1}
\bigskip\bigskip
\begin{tabular}{c|rlrl}
\hline 
$K_n$ & \multicolumn{2}{c}{90\% CI}  & \multicolumn{2}{c}{95\% CI} \\
\hline
1 & [4,899 &  26,994]  &  [3,619&    29,241]\\
2 & [4,859 &   27,148]  &  [3,602&   29,273]\\
3 & [4,830 &  27,396]   & [3,590&   29,331]\\
4 & [4,832 &   27,581]  &  [3,582&   29,488]\\
    \hline
\end{tabular}
\caption{90\% and 95\% confidence intervals with using $R$ as in Theorem \ref{theorem4}}
\label{table_empri2}
\bigskip\bigskip
\begin{tabular}{c|rlrl}
\hline 
$K_n$ & \multicolumn{2}{c}{90\% CI}  & \multicolumn{2}{c}{95\% CI} \\
\hline
1 & [4,912 &   Inf]  &  [3,647&   Inf]\\
2 & [4,889 &  79,281 ]  &  [3,625&   85,330]\\
3 & [4,876 &  102,242]   & [3,614&   109,829]\\
4 & [4,878 &   123,440]  &  [3,618&   132,642]\\
    \hline
\end{tabular}
\caption{90\% and 95\% confidence intervals without $T$ as in Theorem \ref{theorem1-less}}
\label{table_empri3}
\end{table}

\newpage
\section{Numerical example and Monte Carlo simulations}\label{sec5}
This section considers a numerical example to illustrates the theoretical properties in the previous section. 
I consider the following data generating process: 
\begin{eqnarray*}
Z&\sim&Bernoulli(0.5)\\
T^\ast&=&1\{-3/4+1/2Z+U_1\geq 0\}\\
Y&=&2T^\ast+\Phi(U_2)\\
T&=&T^\ast+(1-2T^\ast)1\{U_3\leq\gamma\}
\end{eqnarray*}
where $\Phi$ is the standard normal cdf and, conditional on $Z$, $(U_1,U_2,U_3)$ is drawn from the Gaussian copula with the correlation matrix $$\left(\begin{array}{ccc}1&0.25&0.25\\ 0.25&1&0.25\\ 0.25&0.25&1\end{array}\right).$$ 
I set $\gamma=0, 0.2, 0.4$, which captures the degree of the misclassification. 
In this design, the treatment variable is endogenous since $U_1$ and $U_2$ are correlated.  
In addition, the misclassification is endogenous in that $U_2$ and $U_3$ are correlated. 

Table \ref{table_poppara}  lists the three population objects: the local average treatment effect, the Wald estimand, and the identified set for the local average treatment effect. 
Note that, unless $\gamma=0$, the distribution for $(Y,T,Z)$ violates the conditions in (\ref{TESTABLEIMPLICAT}).
When there is no measurement error, the sharp upper bound is equal to the Wald estimand, which is the case for $\gamma=0$.  
When there is a measurement error, the sharp upper bound for the local average treatment effect can be smaller than the Wald estimand. 

In order to focus on the finite sample properties of the test $\mathbbm{1}\{T(\theta,\pi)>c(\alpha,\theta,\pi)\}$, I only evaluate coverage probabilities given $\pi=0.5$ for various value of $\theta$.   
The partition of grids is equally spaced over $\mathbf{Y}$ with the number the partitions $K_n=1,\ldots,4$.   
Coverage probabilities are calculated as how often the $95\%$ confidence interval includes a given parameter value out of $1000$ simulations.
The sample size is $n=500$ for Monte Carlo simulations. I use $1000$ bootstrap repetitions to construct critical values.
I set $\beta=0.1\%$ for the moment selection. 

Figures \ref{fig1}-\ref{fig3} describe the coverage probabilities of the confidence intervals for each parameter value. 
When the degree of measurement error is zero ($\gamma=0$), the power for the confidence interval with $K_n=1$ has a slightly better performance than those with $K_n\geq 2$. It can be because the number of moment inequalities are larger for $K_n\geq 2$ and then the critical value is bigger. 
As the degree of measurement error becomes larger, the power for the confidence intervals with $K_n\geq 2$ becomes better than that with $K_n=1$. It is a result of the fact that the sharp upper bound for the local average treatment effect is smaller than the Wald estimand. 

Next, I investigate the identifying power of an additional measurement. 
$$
R=T^\ast+(1-2T^\ast)1\{U_4\leq\gamma\}
$$
where $(U_1,U_2,U_3,U_4)$ is drawn from the Gaussian copula with the correlation matrix 
$$\left(\begin{array}{cccc}1&0.25&0.25&0.25\\ 0.25&1&0.25&0.25\\ 0.25&0.25&1&0.25\\ 0.25&0.25&0.25&1\end{array}\right).$$ 
Table \ref{table_poppara2}  lists the three population objects: the local average treatment effect, the Wald estimand, and the identified set for the local average treatment effect. 
Figures \ref{fig4}-\ref{fig6} describe the coverage probabilities of the confidence intervals for each parameter value. 
The comparison among different $K_n$'s are similar to the previous figures.  

Last, I consider the dependence between measurement error and instrumental variable, as in Section \ref{sec3Less}.
Table \ref{table_poppara3} lists the three population objects and Figures \ref{fig7}-\ref{fig9} describe the coverage probabilities of the confidence intervals.
Since they do not use any information from the measured treatment $T$, the identified sets and the confidence intervals show that the upper bounds on the local average treatment effect is larger than those under the independence between measurement error and instrumental variable. 
The difference becomes smaller when the degree of the measurement error is larger.  
It can be considered as the result that, when the misclassification happens too often, the measured treatment $T$ has only little information about the true treatment and therefore there is a small difference between the identified sets.

\begin{table}
\centering
\begin{tabular}{l|ccc}
\hline 
\hline 
$\gamma$ & LATE & Identified set & Wald estimand \\
\hline
$0$ 		& 2.00 & [1,\ 2.00] & 2.00 \\
$0.2$ 	& 2.00 & [1,\ 2.41] & 3.01 \\
$0.4$ 	& 2.00 & [1,\ 2.64] & 8.72\\
    \hline
\end{tabular}
\caption{Population parameters (Theorem \ref{theorem1})}
\label{table_poppara}

\bigskip\bigskip
\begin{tabular}{l|ccc}
\hline 
\hline 
$\gamma$ & LATE & Identified set & Wald estimand \\
\hline
$0$ 		& 2.00 & [1,\ 2.00] & 2.00 \\
$0.2$ 	& 2.00 & [1,\ 2.26] & 3.01 \\
$0.4$ 	& 2.00 & [1,\ 2.62] & 8.72\\
    \hline
\end{tabular}
\caption{Population parameters  with using $R$ (Theorem \ref{theorem4})}
\label{table_poppara2}
\bigskip\bigskip
\begin{tabular}{l|ccc}
\hline 
\hline 
$\gamma$ & LATE & Identified set & Wald estimand \\
\hline
$0$ 		& 2.00 & [1,\ 2.67] & 2.00 \\
$0.2$ 	& 2.00 & [1,\ 2.67] & 3.01 \\
$0.4$ 	& 2.00 & [1,\ 2.68] & 8.72\\
    \hline
\end{tabular}
\caption{Population parameters without $T$ (Theorem \ref{theorem1-less}) }
\label{table_poppara3}
\end{table}
\begin{figure}
\centering
\includegraphics[width=.8\textwidth,keepaspectratio]{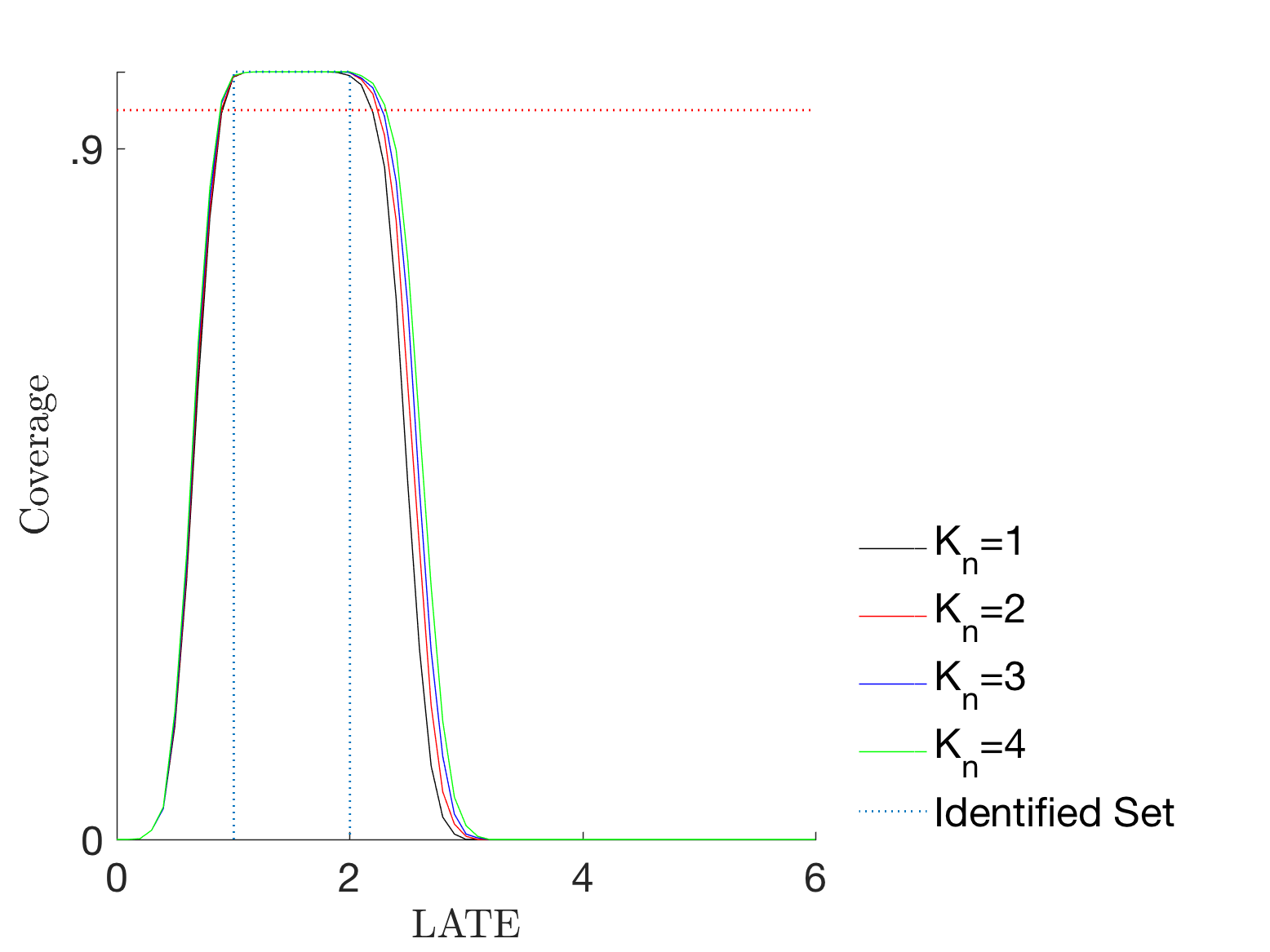}
\caption{Coverage of the confidence interval (Theorem \ref{theorem1}) for $\gamma=0$.}
\label{fig1}
\end{figure}
\begin{figure}
\centering
\includegraphics[width=.8\textwidth,keepaspectratio]{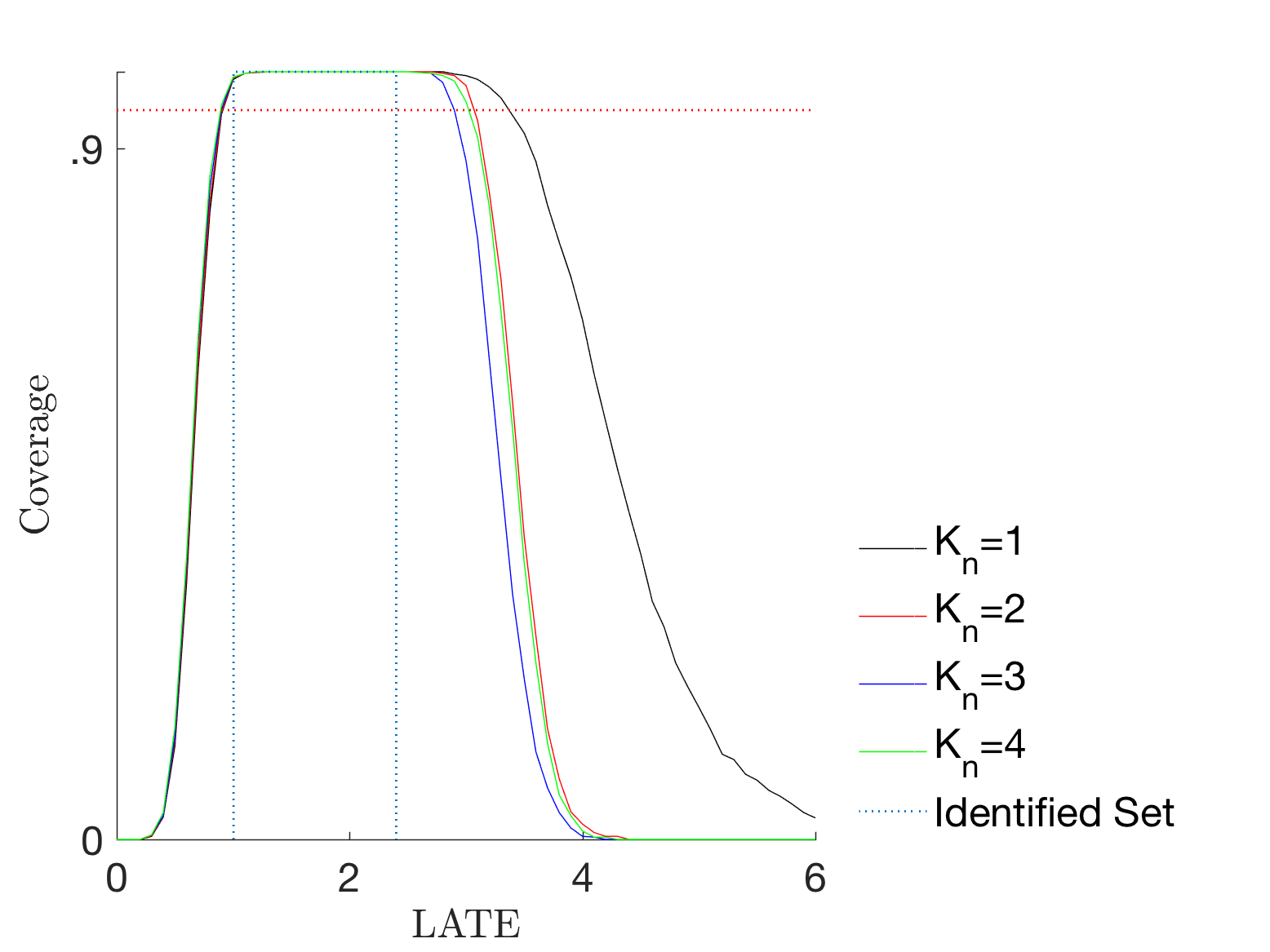}
\caption{Coverage of the confidence interval (Theorem \ref{theorem1}) for $\gamma=0.2$.}
\label{fig2}
\end{figure}
\begin{figure}
\centering
\includegraphics[width=.8\textwidth,keepaspectratio]{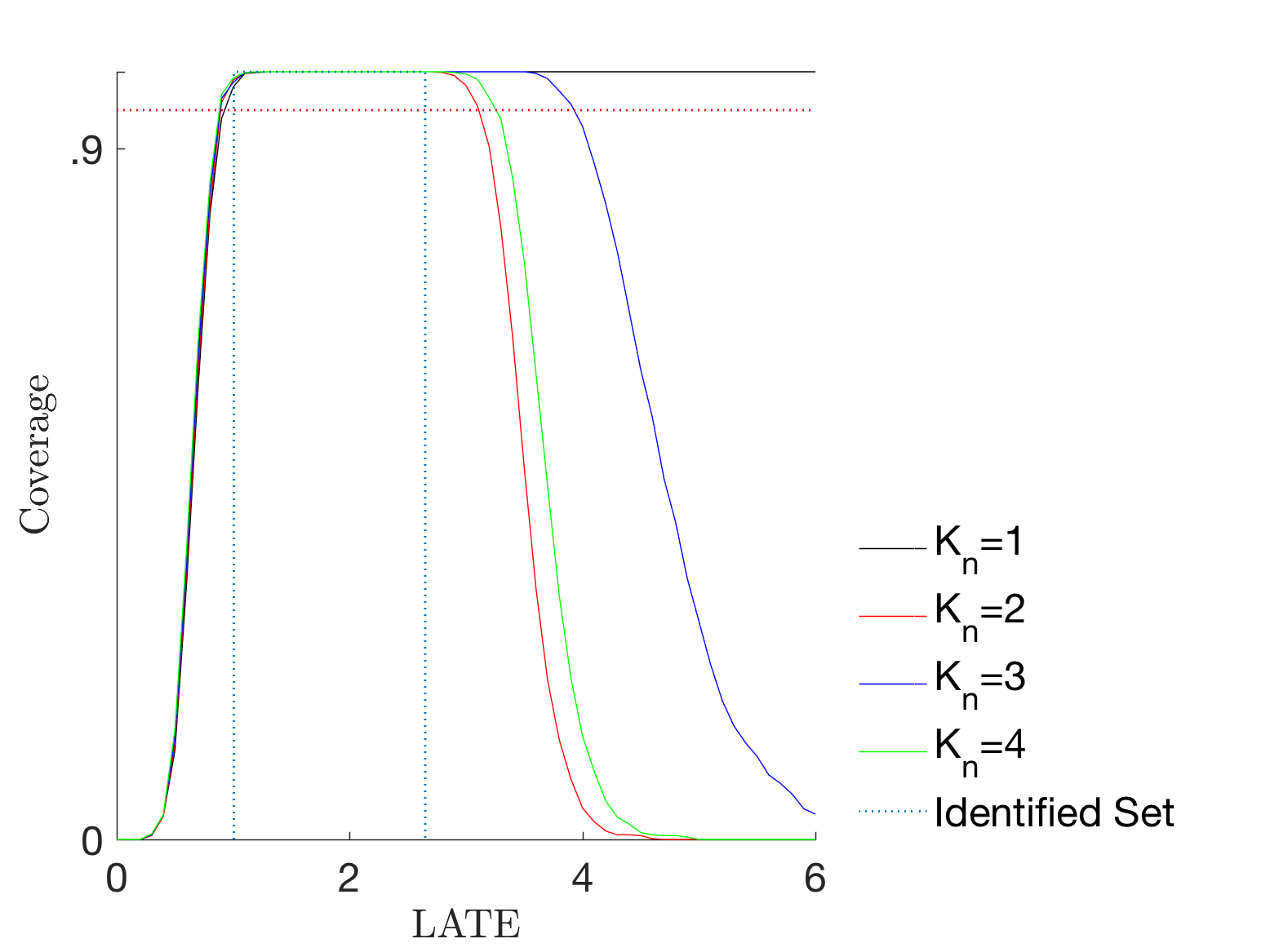}
\caption{Coverage of the confidence interval (Theorem \ref{theorem1}) for $\gamma=0.4$.}
\label{fig3}
\end{figure}
\begin{figure}
\centering
\includegraphics[width=.8\textwidth,keepaspectratio]{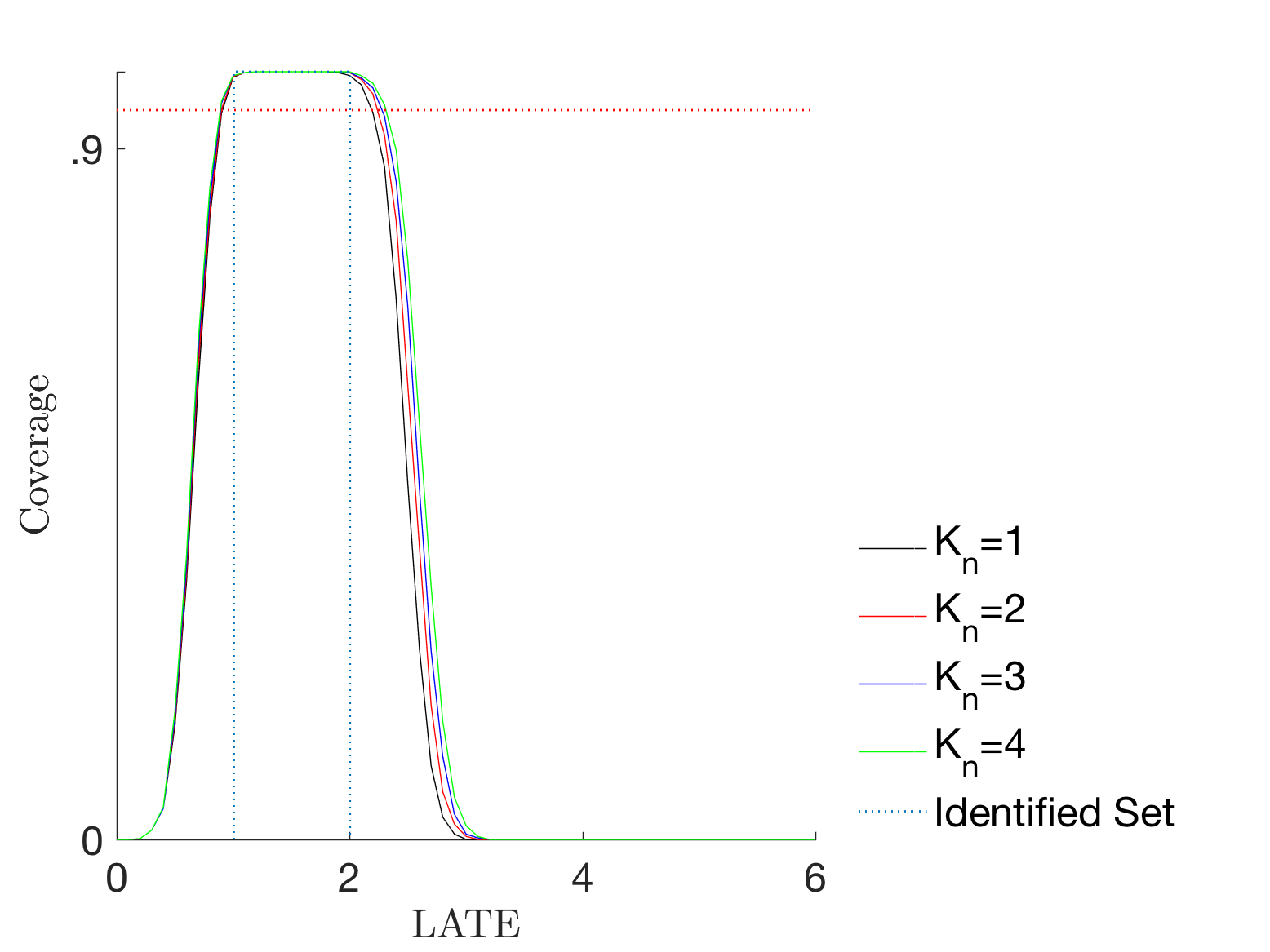}
\caption{Coverage of the confidence interval with using $R$ (Theorem \ref{theorem4}) for $\gamma=0$.}
\label{fig4}
\end{figure}
\begin{figure}
\centering
\includegraphics[width=.8\textwidth,keepaspectratio]{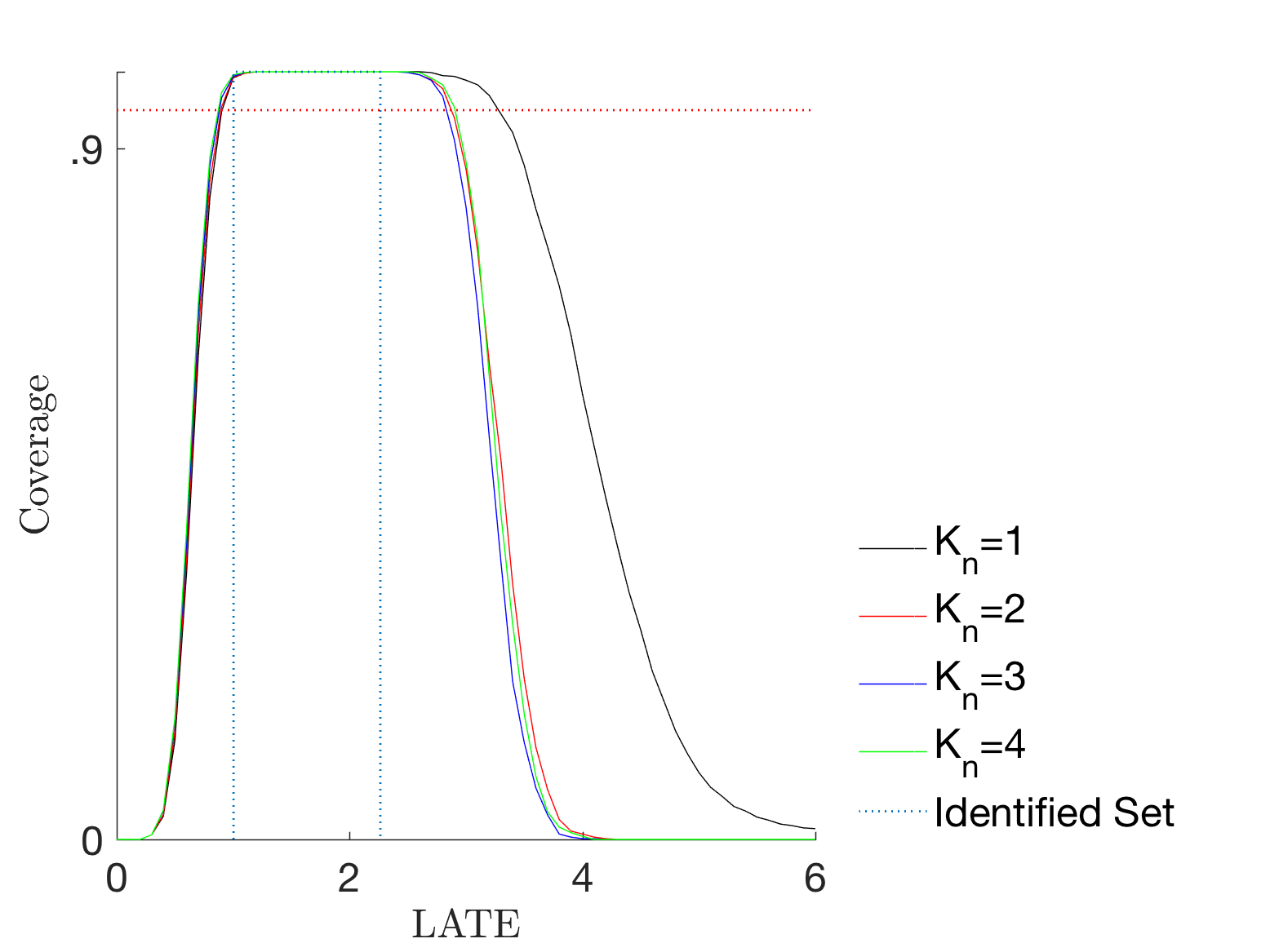}
\caption{Coverage of the confidence interval  with using $R$ (Theorem \ref{theorem4}) for $\gamma=0.2$.}
\label{fig5}
\end{figure}
\begin{figure}
\centering
\includegraphics[width=.8\textwidth,keepaspectratio]{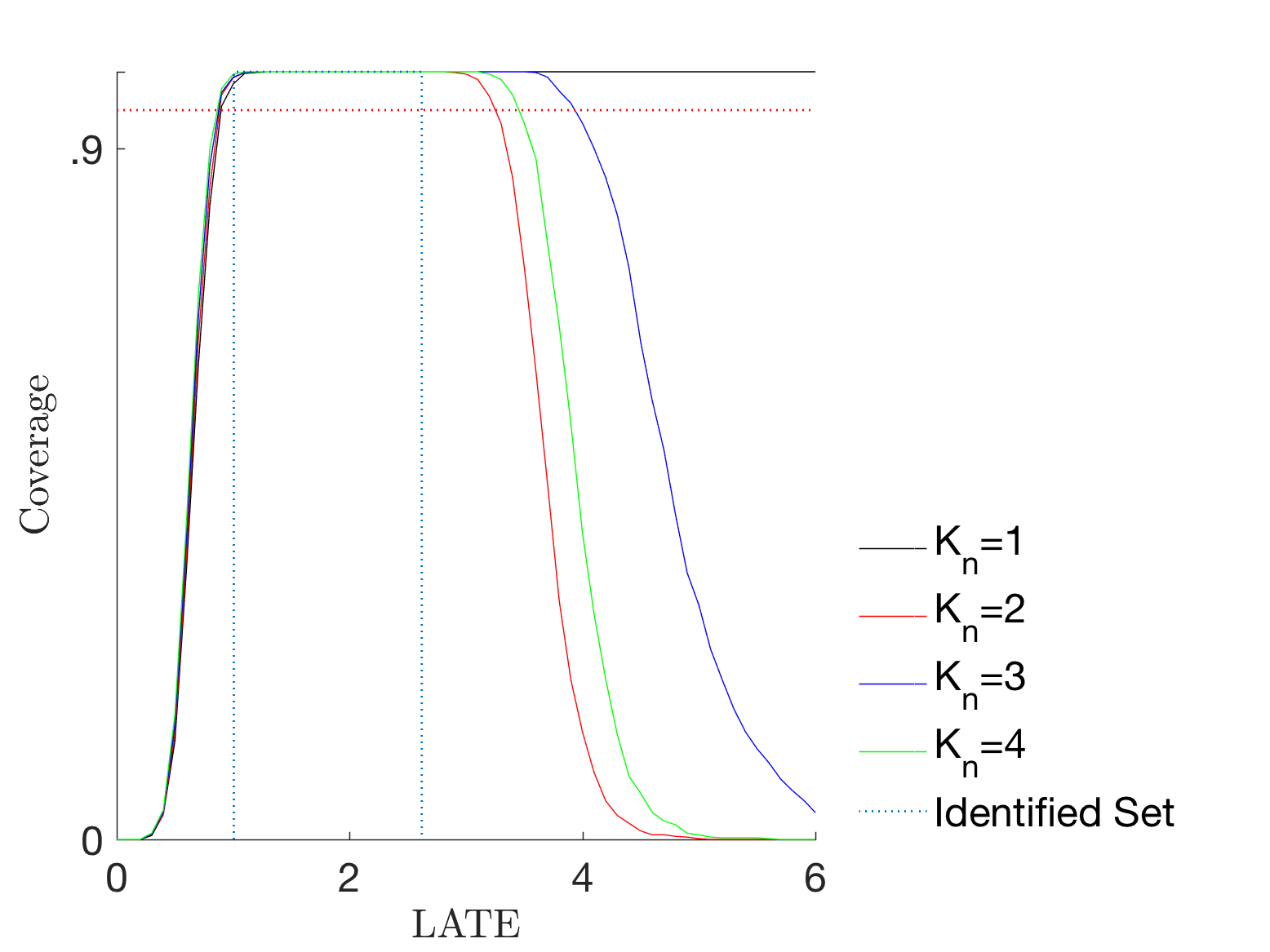}
\caption{Coverage of the confidence interval  with using $R$ (Theorem \ref{theorem4}) for $\gamma=0.4$.}
\label{fig6}
\end{figure}
\begin{figure}
\centering
\includegraphics[width=.8\textwidth,keepaspectratio]{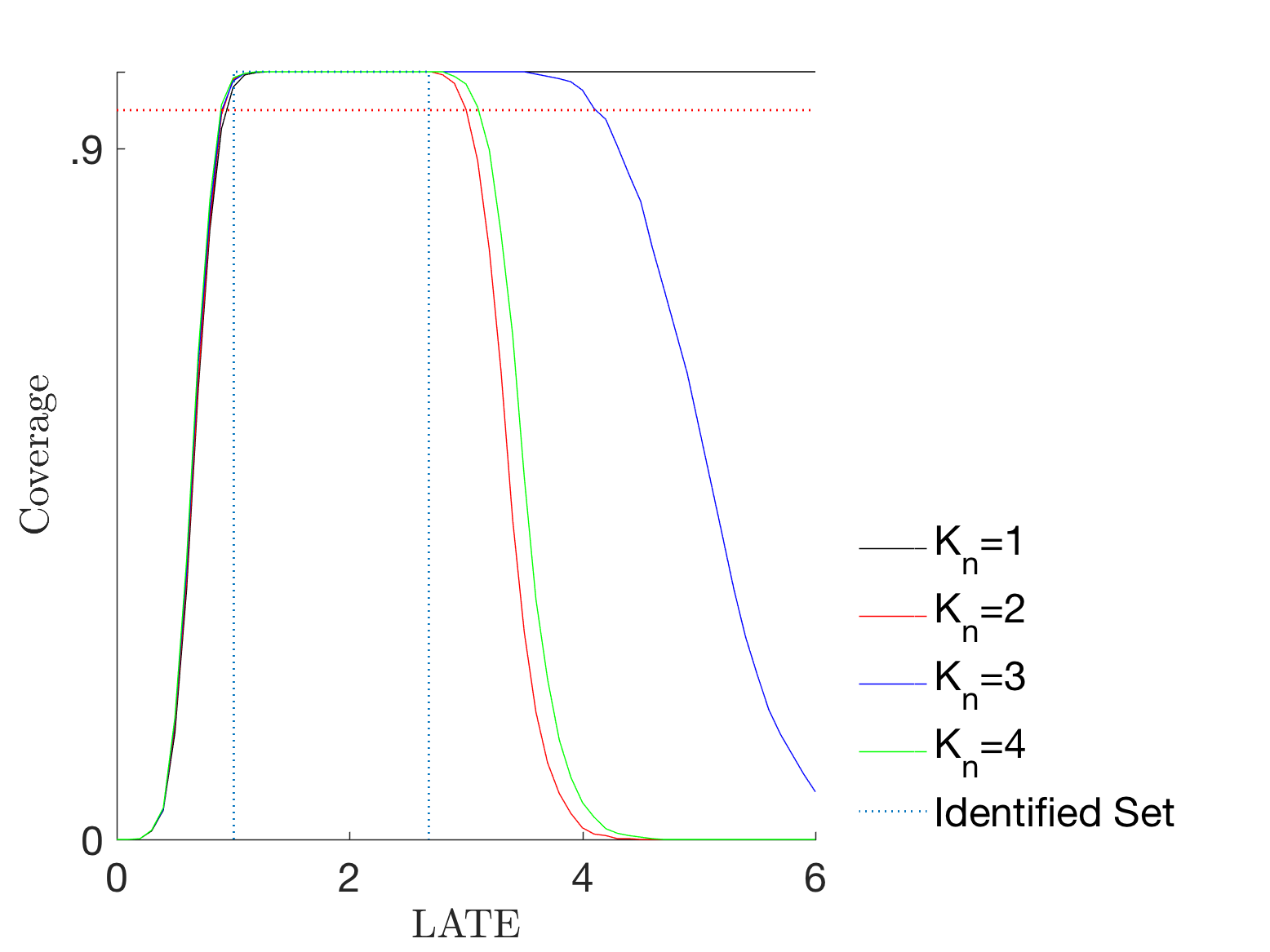}
\caption{Coverage of the confidence interval  without $T$ (Theorem \ref{theorem1-less}) for $\gamma=0$.}
\label{fig7}
\end{figure}
\begin{figure}
\centering
\includegraphics[width=.8\textwidth,keepaspectratio]{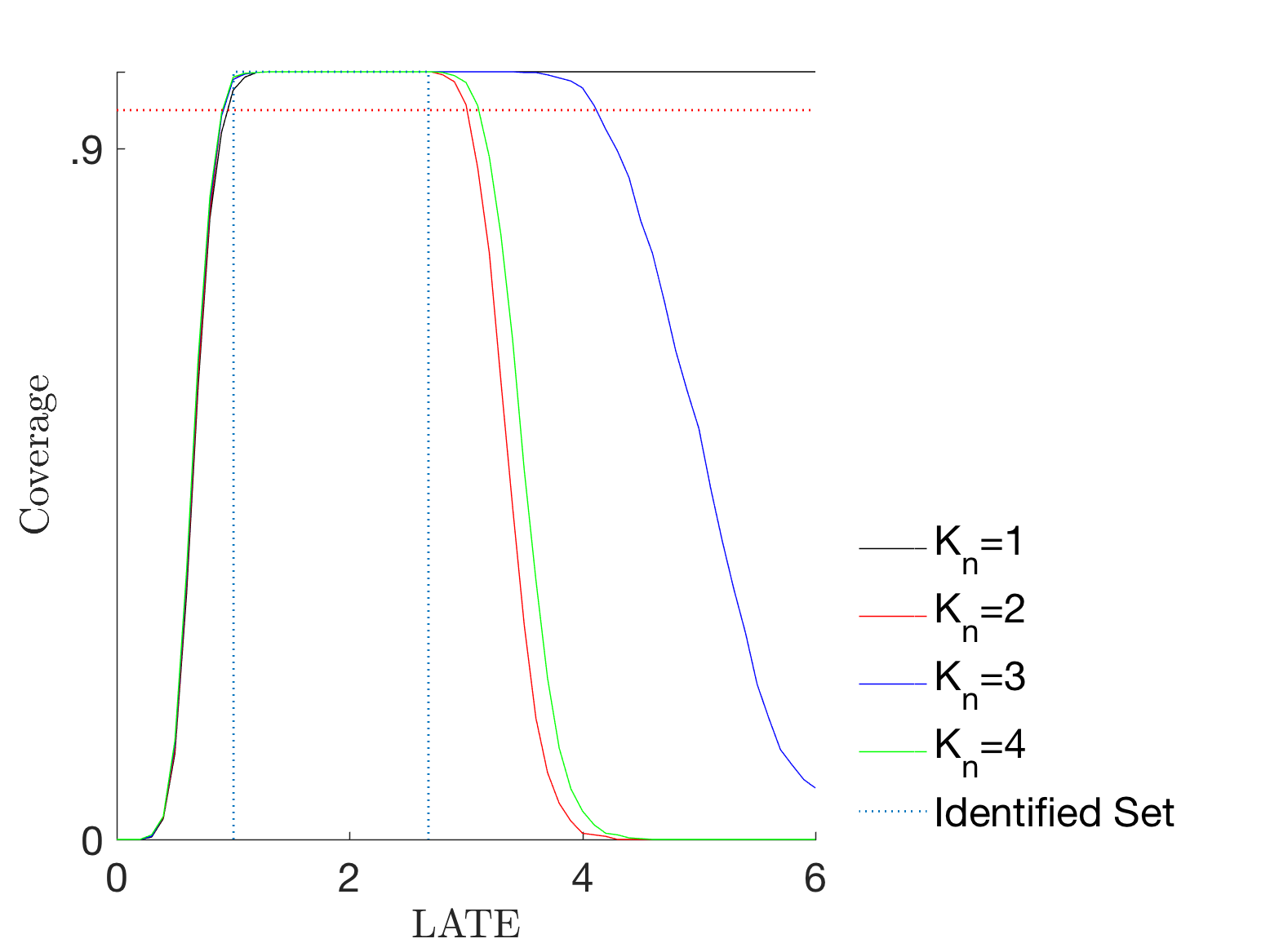}
\caption{Coverage of the confidence interval  without $T$ (Theorem \ref{theorem1-less}) for $\gamma=0.2$.}
\label{fig8}
\end{figure}
\begin{figure}
\centering
\includegraphics[width=.8\textwidth,keepaspectratio]{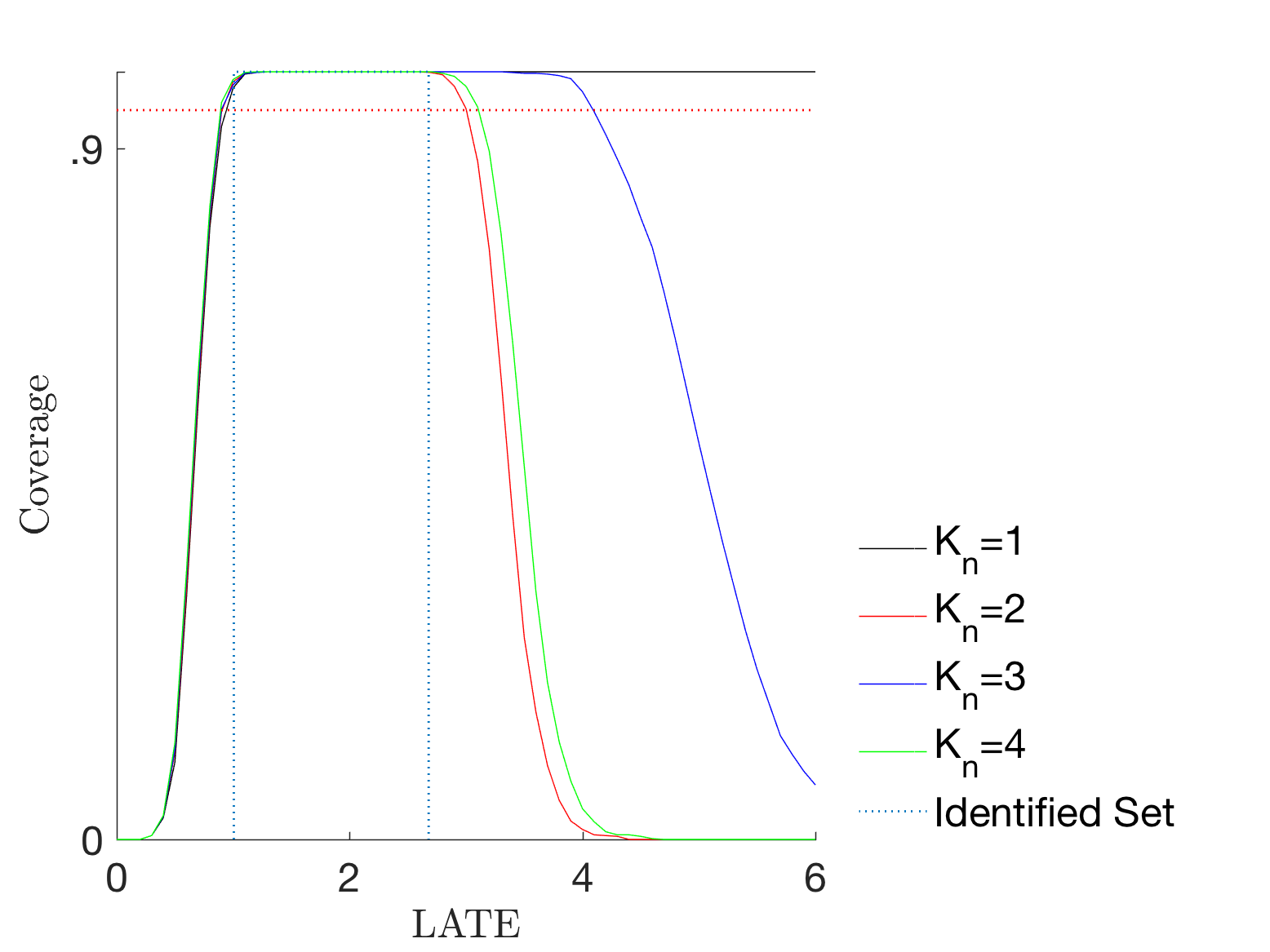}
\caption{Coverage of the confidence interval without $T$ (Theorem \ref{theorem1-less}) for $\gamma=0.4$.}
\label{fig9}
\end{figure}

\newpage
\section{Conclusion}\label{sec6}
This paper studies the identifying power of an instrumental variable in the heterogeneous treatment effect framework when a binary treatment is mismeasured and endogenous. 
The assumptions in this framework are the monotonicity of the instrumental variable $Z$ on the true treatment $T^\ast$ and the exogeneity of $Z$.  
I use the total variation distance to characterize the identified set for the local average treatment effect $E[Y_1-Y_0\mid T_0^\ast<T_1^\ast]$. 
I also provide an inference procedure for the local average treatment effect.
Unlike the existing literature on measurement error, the identification strategy does not reply on a specific structure of the measurement error; the only assumption on the measurement error is its independence of the instrumental variable.

There are several directions for future research. 
First, the choice of the partition $\mathbf{I}_n$ in Section \ref{sec4},  particularly the choice of $K_n$, is an interesting direction. 
To the best of my knowledge, the literature on many moment inequalities has not investigated how econometricians choose the numbers of the many moment inequalities, e.g., \cite{andrews/shi:2016}.  
Second, it may be worthwhile to investigate the other parameter for the treatment effect. 
This paper has focused on the local average treatment effect, but the literature on heterogeneous treatment effect has emphasized the importance of choosing a adequate treatment effect parameter in order to answer relevant policy questions. 
Third, it is also interesting to investigate various assumptions on the measurement errors. 
In some empirical settings, for example, it may be reasonable to assume that the measurement error is one-directional (e.g., misclassification happens only when $T^\ast=1$). 
Fourth, it is not trivial how the analysis of this paper can be extended to an instrumental variable taking more than two values. For a general instrumental variable, it is always possible to focus on two values of the instrumental variable and apply the analysis of this paper to the subpopulation with the instrumental variable taking these two values.  
However, different pairs of the values can have different compliers, so that the parameter of interest is not common across different pairs, as in \cite{heckman/vytlacil:2005}.  

\appendix
\section*{Appendix A: Multiplier bootstrap}\label{CCKbootstrap}
This section describes the multiplier bootstrap in \cite{chernozhukov/chetverikov/kato:2014}. For the sake of simplicity, I do not talk about their two- and three-step versions of the multiplier bootstrap.

Define the moment functions based on (\ref{Approx1})-(\ref{Approx3}). Define $\mathbf{H}_n=\{h_1,\cdots,h_{p_n-2}\}$ and 
\begin{eqnarray*}
g_1(W,\theta,\pi)&=&-\frac{Z-\pi(V)}{\pi(V)(1-\pi(V))}\mathrm{sgn}(\theta)Y\\
g_2(W,\theta,\pi)&=&\frac{Z-\pi(V)}{\pi(V)(1-\pi(V))}\mathrm{sgn}(\theta)Y-|\theta|\\
g_{2+j}(W,\theta,\pi)&=&\frac{Z-\pi(V)}{\pi(V)(1-\pi(V))}\left(|\theta|h_j(Y,T,V)-\mathrm{sgn}(\theta)Y\right).
\end{eqnarray*}
Then the approximated identified set is characterized by 
$$
\{(\theta,\pi)\in\Theta\times[0,1]: E_P[g_j(W,\theta,\pi)]\mbox{ for every }j=1,\ldots,p_n\}.
$$

I construct a confidence interval $\mathcal{C}_{\theta,n}(\alpha)$ via the multiplier bootstrap in \cite{chernozhukov/chetverikov/kato:2014}.
The test statistic for the true parameter values being $\psi$ is defined by 
$$
T(\theta,\pi)=\max_{1\leq j\leq p_n}\frac{\sqrt{n}\hat{m}_j(\theta,\pi)}{\hat\sigma_j(\theta,\pi)}, 
$$
where $\hat{m}_j(\theta,\pi)$ estimates $m_j(\theta,\pi)= E_P[g_j(W,\theta,\pi)]$, and $\hat\sigma_j^2(\theta,\pi)$ estimates the variance $\sigma_j^2(\theta,\pi)$ of $\sqrt{n}\hat{m}_j(\theta,\pi)$:
\begin{eqnarray*}
\hat{m}_j(\theta,\pi)&=& n^{-1}\sum_{i=1}^ng_j(W_i,\theta,\pi)\\
\hat\sigma_j^2(\theta,\pi)&=& n^{-1}\sum_{i=1}^n(g_j(W_i,\theta,\pi)-\hat{m}_j(\theta,\pi))^2.
\end{eqnarray*}
To conduct a multiplier bootstrap, generate $n$ independent standard normal random variables $\epsilon_1,\ldots,\epsilon_n$. 
The centered bootstrap moments are 
$$
\hat{m}_j^{B}(\theta,\pi)= n^{-1}\sum_{i=1}^n\epsilon_i\left(g_j(W_i,\theta,\pi)-\hat{m}_j(\theta,\pi)\right).
$$
The bootstrapped test statistic is defined by  
$$
T^B(\theta,\pi)=\max_{1\leq j\leq p_n}\sqrt{n}\frac{\hat{m}_j^{B}(\psi)}{\hat\sigma_j(\psi)}.
$$
The critical value $c(\alpha,\theta,\pi)$ is defined as the conditional $(1-\alpha)$-quantile of $T^B(\theta,\pi)$ given $\{W_i\}$: 
$$
Pr\left(T^B(\theta,\pi)\leq c(\alpha,\theta,\pi)\mid \{W_i\}\right)=1-\alpha.
$$

\section*{Appendix B: Proofs}
\subsection*{Proof of Lemma \ref{lemma1}}
By Equation (\ref{eqIA}), 
$\theta(P^{\ast})\Delta E_{P^{\ast}}[Y\mid Z]=\theta(P^{\ast})^2P^{\ast}(T_0^\ast<T_1^\ast)\geq 0$, and $|\Delta E_{P^{\ast}}[Y\mid Z]|=|\theta(P^{\ast})P^{\ast}(T_0^\ast<T_1^\ast)|\leq|\theta(P^{\ast})|$.

\subsection*{Proof of Lemma \ref{lemma2}}
I obtain $f_{(Y,T)\mid Z=1}-f_{(Y,T)\mid Z=0}=P^{\ast}(T_0^\ast<T_1^\ast)(f_{(Y_1,T_1)\mid T_0^\ast<T_1^\ast}-f_{(Y_0,T_0)\mid T_0^\ast<T_1^\ast})$ by the same logic as Theorem 1 in \cite{imbens/angrist:1994}: 
\begin{eqnarray*}
f_{(Y,T)\mid Z=0}
&=&
P^{\ast}(T_0^\ast=T_1^\ast=1\mid Z=0)f_{(Y,T)\mid Z=0,T_0^\ast=T_1^\ast=1}
\\&&\quad+P^{\ast}(T_0^\ast<T_1^\ast\mid Z=0)f_{(Y,T)\mid Z=0,T_0^\ast<T_1^\ast}
\\&&\quad+P^{\ast}(T_0^\ast=T_1^\ast=0\mid Z=0)f_{(Y,T)\mid Z=0,T_0^\ast=T_1^\ast=0}\\
&=&
P^{\ast}(T_0^\ast=T_1^\ast=1)f_{(Y_1,T_1)\mid T_0^\ast=T_1^\ast=1}
\\&&\quad+P^{\ast}(T_0^\ast<T_1^\ast)f_{(Y_0,T_0)\mid T_0^\ast<T_1^\ast}
\\&&\quad+P^{\ast}(T_0^\ast=T_1^\ast=0)f_{(Y_0,T_0)\mid T_0^\ast=T_1^\ast=0}\\
f_{(Y,T)\mid Z=1}
&=&
P^{\ast}(T_0^\ast=T_1^\ast=1)f_{(Y_1,T_1)\mid T_0^\ast=T_1^\ast=1}
\\&&\quad+P^{\ast}(T_0^\ast<T_1^\ast)f_{(Y_1,T_1)\mid T_0^\ast<T_1^\ast}
\\&&\quad+P^{\ast}(T_0^\ast=T_1^\ast=0)f_{(Y_0,T_0)\mid T_0^\ast=T_1^\ast=0}.
\end{eqnarray*}
By the triangle inequality, 
\begin{eqnarray*}
TV_{(Y,T)}
&=&
\frac{1}{2}\sum_{t=0,1}\int |f_{(Y,T)\mid Z=1}(y,t)-f_{(Y,T)\mid Z=0}(y,t)|d\mu_Y(y)\\
&=&
\frac{1}{2}\sum_{t=0,1}\int |P^{\ast}(T_0^\ast<T_1^\ast)(f_{(Y_1,T_1)\mid T_0^\ast<T_1^\ast}(y,t)-f_{(Y_0,T_0)\mid T_0^\ast<T_1^\ast}(y,t))|d\mu_Y(y)\\
&=&
P^{\ast} (T_0^\ast<T_1^\ast)\frac{1}{2}\sum_{t=0,1}\int |f_{(Y_1,T_1)\mid T_0^\ast<T_1^\ast}(y,t)-f_{(Y_0,T_0)\mid T_0^\ast<T_1^\ast}(y,t)|d\mu_Y(y)\\
&\leq&
P^{\ast} (T_0^\ast<T_1^\ast)\frac{1}{2}\sum_{t=0,1}\int (f_{(Y_1,T_1)\mid T_0^\ast<T_1^\ast}(y,t)+f_{(Y_0,T_0)\mid T_0^\ast<T_1^\ast}(y,t))d\mu_Y(y)\\
&=&
P^{\ast} (T_0^\ast<T_1^\ast).
\end{eqnarray*}
Moreover, since $T_0^\ast\leq T_1^\ast$ almost surely, 
\begin{eqnarray*}
P^{\ast} (T_0^\ast<T_1^\ast),
&=&
\frac{1}{2}|P^{\ast} (T_0^\ast=1)-P^{\ast} (T_1^\ast=1)|+\frac{1}{2}|P^{\ast} (T_0^\ast=0)-P^{\ast} (T_1^\ast=0)|\\
&=&
\frac{1}{2}\sum_{t^\ast=0,1}|f_{T^{\ast}\mid Z=1}(t^\ast)-f_{T^{\ast}\mid Z=0}(t^\ast)|\\
&=&
TV_{T^\ast}.
\end{eqnarray*}

\subsection*{Proof of Lemma \ref{Wald_lemma}}
Based on the triangle inequality, 
\begin{eqnarray*}
TV_{(Y,T)}
&=&
\frac{1}{2}\sum_{t=0,1}\int |f_{(Y,T)\mid Z=1}(y,t)-f_{(Y,T)\mid Z=0}(y,t)|d\mu_Y(y)\\
&\geq&
\frac{1}{2}\sum_{t=0,1}|\int(f_{(Y,T)\mid Z=1}(y,t)-f_{(Y,T)\mid Z=0}(y,t))d\mu_Y(y)|\\
&=&
\frac{1}{2}\sum_{t=0,1}|f_{T\mid Z=1}(t)-f_{T\mid Z=0}(t)|\\
&=&
|f_{T\mid Z=1}(1)-f_{T\mid Z=0}(1)|\\
&=&
|\Delta E_P[T\mid Z]|.
\end{eqnarray*}
The equality holds if and only if the sign of $f_{(Y,T)\mid Z=1}(y,t)-f_{(Y,T)\mid Z=0}(y,t)$ is constant in $y$ for every $t$. 
Since $|\Delta E_P[T\mid Z]|=\Delta E_P[T\mid Z]$ if and only if $f_{T\mid Z=1}(1)-f_{T\mid Z=0}(1)$ is positive, the condition in (\ref{TESTABLEIMPLICAT}) is a necessary and sufficient condition for $TV_{(Y,T)}=\Delta E_P[T\mid Z]$, which is equivalent for the Wald estimand to belong to the identified set. 

\subsection*{Proof of Theorems \ref{theorem1}, \ref{theorem1conditional} and \ref{theorem4}}
Theorems \ref{theorem1}, \ref{theorem1conditional} and \ref{theorem4} are special cases of the following theorem. 
\begin{theorem}\label{intergreated_thereom1}
There is some variable $V$ taking values in a set $\mathbf{V}$ satisfying the following properties. 
\begin{itemize}
\item[(i)] For each $t^\ast=0,1$, $Z$ is conditionally independent of $(R_{t^\ast},T_{t^\ast},Y_{t^\ast},T^\ast_{0},T^\ast_{1})$ given $V$.
\item[(ii)] $T^\ast_{1}\geq T^\ast_{0}$ almost surely.
\item[(iii)] $0<P(Z=1\mid V)<1$. 
\end{itemize} 
Consider an arbitrary data distribution $P$ of $(R,Y,T,Z,V)$. 
The identified set $\Theta_I(P)$ for the local average treatment effect is the set of $\theta\in\Theta$ satisfying the following three inequalities. 
\begin{eqnarray*}
&&|\theta|E_P[TV_{(R,Y,T)\mid V}]\leq |E_P[\Delta E_P[Y\mid Z,V]]|\\
&&\theta E_P[\Delta E_P[Y\mid Z,V]]\geq 0\\
&&|\theta|\geq |E_P[\Delta E_P[Y\mid Z,V]]|.
\end{eqnarray*} 
\end{theorem}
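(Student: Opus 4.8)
The plan is to prove the two inclusions separately: first that every $\theta\in\Theta_I(P)$ satisfies the three displayed inequalities (the outer bound), and then that every $\theta\in\Theta$ satisfying them is attained by some $P^{\ast}\in\mathcal{P}^{\ast}$ inducing $P$ (sharpness). Throughout I would work conditionally on $V=v$ and then integrate, and I would obtain Theorems (\ref{theorem1}), (\ref{theorem1conditional}) and (\ref{theorem4}) at the end by letting $V$ and/or $R$ be degenerate (a constant $R$ leaves $TV_{(R,Y,T)\mid V}$ equal to $TV_{(Y,T)\mid V}$).

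For the outer bound I would first record the conditional-on-$V$ analogues of Lemmas (\ref{lemma1}) and (\ref{lemma2}): rerunning the proof of Lemma (\ref{lemma2}) inside the event $\{V=v\}$ gives $TV_{(R,Y,T)\mid V=v}\le P^{\ast}(T_0^\ast<T_1^\ast\mid V=v)$ for a.e.\ $v$, and the conditional version of \cite{imbens/angrist:1994}'s identity (\ref{eqIA}) gives $\Delta E_{P^{\ast}}[Y\mid Z,V=v]=\theta(P^{\ast},v)\,P^{\ast}(T_0^\ast<T_1^\ast\mid V=v)$, where $\theta(P^{\ast},v)$ is the conditional LATE. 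Taking expectations over $V$ and using the law of iterated expectations, $\theta(P^{\ast})\,P^{\ast}(T_0^\ast<T_1^\ast)=E_{P}[\Delta E_{P}[Y\mid Z,V]]$. The sign inequality then follows from multiplying through by $\theta(P^{\ast})$; the bound $|\theta|\ge|E_P[\Delta E_P[Y\mid Z,V]]|$ from $P^{\ast}(T_0^\ast<T_1^\ast)\le 1$; and $|\theta|\,E_P[TV_{(R,Y,T)\mid V}]\le|E_P[\Delta E_P[Y\mid Z,V]]|$ from $E_P[TV_{(R,Y,T)\mid V}]\le E_{P_V}[P^{\ast}(T_0^\ast<T_1^\ast\mid V)]=P^{\ast}(T_0^\ast<T_1^\ast)$.

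The substantive part is sharpness, which I would reduce to a mixture-decomposition problem at each $v$. Writing $f_z=f_{(R,Y,T)\mid Z=z,V=v}$, $d=f_1-f_0$, and letting $d_+,d_-$ be the positive and negative parts of $d$ (so $\int d_+=\int d_-=TV_{(R,Y,T)\mid V=v}$), I would show that for any prescribed $\phi\in[TV_{(R,Y,T)\mid V=v},1]$ there are nonnegative $g_0,g_1$ with $\int g_0=\int g_1=1$ and a nonnegative $c$ with $\int c=1-\phi$ such that $f_0=c+\phi g_0$ and $f_1=c+\phi g_1$: take any density $g_0$ with $d_-/\phi\le g_0\le f_0/\phi$ pointwise, which is feasible exactly because $\int d_-/\phi\le 1\le\int f_0/\phi$, i.e.\ because $\phi\ge TV_{(R,Y,T)\mid V=v}$, and set $g_1=g_0+d/\phi$ and $c=f_0-\phi g_0$. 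Interpreting $g_0,g_1$ as the complier distributions of $(R_0,Y_0,T_0)$ and $(R_1,Y_1,T_1)$, splitting $c$ into an always-taker part and a never-taker part with arbitrary masses summing to $1-\phi$, coupling potential outcomes independently within each latent type, assigning $(T_0^\ast,T_1^\ast)$ equal to $(1,1)$, $(0,0)$, $(0,1)$ on always-takers, never-takers, compliers, and keeping $Z\mid V$ as in $P$, produces $P^{\ast}$; a direct check of the induced mixture confirms $P^{\ast}_{(R,Y,T,Z,V)}=P$ and that Assumption (\ref{assumption1condi}) holds. The key computation is that $g_1^Y-g_0^Y=d^Y/\phi$ is forced, so the conditional LATE equals $\Delta E_P[Y\mid Z,V=v]/\phi(v)$, giving $\theta(P^{\ast})=E_P[\Delta E_P[Y\mid Z,V]]/E_{P_V}[\phi(V)]$; choosing $\phi(v)=(1-\lambda)TV_{(R,Y,T)\mid V=v}+\lambda$ with $\lambda\in[0,1]$ sweeps $E_{P_V}[\phi(V)]$ over $[E_P[TV_{(R,Y,T)\mid V}],1]$, which by the three inequalities is precisely the range needed to hit any admissible $\theta$.

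I expect the main obstacle to be this sharpness construction. Two points need care: verifying that the pointwise box $d_-/\phi\le g_0\le f_0/\phi$ admits a selection that is jointly measurable in $(v,\cdot)$ and a density in its own argument (this is exactly where the hypothesis $TV\le\phi$ is used, and is the reason the total variation distance is the right summary of the observed $Z$-variation), and disposing of the degenerate cases — $E_P[TV_{(R,Y,T)\mid V}]=0$, handled by taking $\phi\equiv 0$ so that there are no compliers, $\theta(P^{\ast})$ is unrestricted, and $\Theta_I(P)=\Theta$; the case $\theta=0$; and the integrability of $Y$ needed for the conditional means to be well defined.
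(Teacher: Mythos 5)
Your proof is correct and follows the same two-step architecture as the paper's: the outer bound is obtained exactly as in the paper (the conditional-on-$V$ version of Lemma \ref{lemma2}, the conditional Imbens--Angrist identity (\ref{eqIA}), and iterated expectations), and sharpness is obtained by exhibiting, for each admissible $\theta$, a data-generating process with complier share $\phi(v)\in[TV_{(R,Y,T)\mid V=v},1]$ whose complier contrast in $(R,Y,T)$ is $(f_1-f_0)/\phi$. The only substantive difference is how that family is packaged. The paper constructs two extremal processes --- $P_L^{\ast}$, where everyone is a complier (LATE equal to the ITT), and $P_U^{\ast}$, where the complier share equals the total variation distance and the complier distributions are the normalized positive and negative parts of $f_1-f_0$ (encoded via the sign variable $H$) --- and then sweeps the interval by mixing $\lambda P_L^{\ast}+(1-\lambda)P_U^{\ast}$. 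Your decomposition $f_z=c+\phi g_z$ with the pointwise box $d_-/\phi\le g_0\le f_0/\phi$ produces exactly these two processes at $\lambda=1$ and $\lambda=0$ respectively (at $\phi=TV$ the box forces $g_0=d_-/TV$, which is the paper's $P_U^{\ast}$), and for intermediate $\lambda$ it gives a single direct construction rather than a mixture. What your route buys is that the feasibility condition $\phi\ge TV$ appears transparently as the reason the total variation distance is the sharp lower bound on the complier share, and you avoid having to check that mixtures of DGPs preserve Assumption \ref{assumption1condi}; what the paper's route buys is fully explicit densities, sidestepping the measurable-selection point you correctly flag (which your affine interpolant $g_0=d_-/\phi+t(f_0-d_-)/\phi$ with $t=(\phi-TV)/(1-TV)$ resolves). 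Both treatments of the degenerate case $E_P[TV_{(R,Y,T)\mid V}]=0$ rest on the same convention of specifying the conditional law of $(Y_0,Y_1)$ on the null complier event, so you are no worse off than the paper there.
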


To show Theorem \ref{intergreated_thereom1}, I consider the two cases separately: $E_P[TV_{(R,Y,T)\mid V}]=0$ or $E_P[TV_{(R,Y,T)\mid V}]>0$.
\subsubsection*{Case 1: $E_P[TV_{(R,Y,T)\mid V}]=0$.}
In this case, $f_{(R,Y,T)\mid Z=z,V}=f_{(R,Y,T)\mid V}$ a.s. for every $z=0,1$.
Let $f_1$ and $f_0$ be any pair of density functions for $Y$ dominated by $\mu_Y$. 
Define the data generating process $P_{f_1,f_0}^{\ast}$: 
\begin{eqnarray*}
&(Z,V)&\sim\ f_{(Z,V)}\\ 
&(T^\ast_{0},T^\ast_{1})\mid Z,V&=\ (1,1)\\
&(R_1,Y_1,T_1)\mid (T^\ast_{0},T^\ast_{1},Z,V)&\sim\
\begin{cases}
f_{(R,Y,T)}(r,y,t)&\mbox{ if }T^\ast_{0}=T^\ast_{1}\\
f_1(y)f_{(R,T)}(r,t)&\mbox{ if }T^\ast_{0}\ne T^\ast_{1}
\end{cases}\\
&(R_0,Y_0,T_0)\mid (T^\ast_{0},T^\ast_{1},Z)&\sim\ f_0(y)f_{(R,T)}(r,t).
\end{eqnarray*}

Theorem \ref{intergreated_thereom1} follows from the following three observations: 
\begin{itemize}
\item[(i)] $P_{f_1,f_0}^{\ast}$ satisfies all the assumptions in Theorem \ref{intergreated_thereom1}; 
\item[(ii)] $P_{f_1,f_0}^{\ast}$ generates the data distribution $P$; 
\item[(iii)] under $P_{f_1,f_0}^{\ast}$, the local average treatment effect is $\int y(f_1(y)-f_0(y))d\mu_Y(y)$.
\end{itemize}

(i) $P_{f_1,f_0}^{\ast}$ satisfies the independence between $Z$ and $(R_{t^\ast},T_{t^\ast},Y_{t^\ast},T^\ast_{0},T^\ast_{1})$ given $V$ for each $t^\ast=0,1$. 
Furthermore, $P_{f_1,f_0}^{\ast}$ satisfies $T^\ast_{1}\geq T^\ast_{0}$ almost surely. 

(ii) 
Denote by $f^{\ast}$ the density function of $P_{f_1,f_0}^{\ast}$.
Then 
\begin{eqnarray*}
f^{\ast}_{(R,Y,T)\mid Z=0,V}
&=&
f^{\ast}_{(R_1,Y_1,T_1)\mid Z=0,V}\\
&=&
f_{(R,Y,T)\mid V}\\
&=&
f_{(R,Y,T)\mid Z=0,V}\\
f^{\ast}_{(R,Y,T)\mid Z=1,V}
&=&
f^{\ast}_{(R_1,Y_1,T_1)\mid Z=0,V}\\
&=&
f_{(R,Y,T)\mid V}\\
&=&
f_{(R,Y,T)\mid Z=1,V}.
\end{eqnarray*}
where the last equality uses $f_{(R,Y,T)\mid Z=0,V}=f_{(R,Y,T)\mid Z=1,V}$. 

(iii) 
The local average treatment effect under $P_{f_1,f_0}^{\ast}$ is 
\begin{eqnarray*}
E_{P_{f_1,f_0}^{\ast}}[Y_1-Y_0\mid T_0^\ast<T_1^\ast]
&=&
E_{P_{f_1,f_0}^{\ast}}[Y_1\mid T_0^\ast<T_1^\ast]-E_{P_{f_1,f_0}^{\ast}}[Y_0\mid T_0^\ast<T_1^\ast]\\
&=&
\int yf_1(y)d\mu_Y(y)-\int yf_0(y)d\mu_Y(y).
\end{eqnarray*}

\subsubsection*{Case 2: $E_P[TV_{(R,Y,T)\mid V}]>0$.}
Lemma \ref{lemma2} is modified into the framework of Theorem \ref{intergreated_thereom1}. 
\begin{lemma}\label{lemma4}
Under Assumption \ref{assumption8}, $E_P[TV_{(R,Y,T)\mid V}]\leq P^{\ast}(T_0^\ast<T_1^\ast)$.
\end{lemma}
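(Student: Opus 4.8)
The plan is to mirror the proof of Lemma \ref{lemma2}, adjoining the repeated measurement $R$ to the pair $(Y,T)$ and carrying out the whole argument conditionally on $V$ before integrating $V$ out. Throughout I work under conditions (i)--(iii) of Theorem \ref{intergreated_thereom1} (the $V$-conditional versions of Assumption \ref{assumption8}).

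First I would fix a value $V=v$ and apply the Imbens--Angrist decomposition to the conditional density of the observables. Under conditional independence of $Z$ given $V$ together with monotonicity $T^\ast_0\le T^\ast_1$, the population splits into always-takers ($T^\ast_0=T^\ast_1=1$), never-takers ($T^\ast_0=T^\ast_1=0$), and compliers ($T^\ast_0<T^\ast_1$), and within each group the conditional law of the reported triple is independent of $Z$. Since always-takers report $(R_1,Y_1,T_1)$ and never-takers report $(R_0,Y_0,T_0)$ regardless of $Z$, their contributions cancel in the difference $f_{(R,Y,T)\mid Z=1,V=v}-f_{(R,Y,T)\mid Z=0,V=v}$, leaving only the complier term:
\[
f_{(R,Y,T)\mid Z=1,V=v}-f_{(R,Y,T)\mid Z=0,V=v}
=P^{\ast}(T^\ast_0<T^\ast_1\mid V=v)\bigl(f_{(R_1,Y_1,T_1)\mid T^\ast_0<T^\ast_1,V=v}-f_{(R_0,Y_0,T_0)\mid T^\ast_0<T^\ast_1,V=v}\bigr).
\]

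Next I would take the conditional total variation over $(R,Y,T)$. Pulling out the nonnegative scalar $P^{\ast}(T^\ast_0<T^\ast_1\mid V=v)$ and applying the triangle inequality $|f-g|\le f+g$ for densities, the residual integral $\tfrac12\sum_{t}\iint\bigl(f_{(R_1,Y_1,T_1)\mid T^\ast_0<T^\ast_1,V=v}+f_{(R_0,Y_0,T_0)\mid T^\ast_0<T^\ast_1,V=v}\bigr)\,d\mu_R\,d\mu_Y$ equals $\tfrac12(1+1)=1$. This yields the conditional bound $TV_{(R,Y,T)\mid V=v}\le P^{\ast}(T^\ast_0<T^\ast_1\mid V=v)$. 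Finally I would integrate over $V$ and invoke the law of iterated expectations, $E_P[TV_{(R,Y,T)\mid V}]\le E[P^{\ast}(T^\ast_0<T^\ast_1\mid V)]=P^{\ast}(T^\ast_0<T^\ast_1)$, which is the claim.

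The main obstacle is the decomposition step, namely verifying the cancellation of the always-taker and never-taker contributions. This is exactly where conditional independence and monotonicity are used: I must match each of the three groups to the potential-outcome triple it actually reports under each value of $Z$ and confirm that the relevant conditional densities coincide across $Z=0,1$ given $V=v$. Once that identity is in place, the total-variation bound and the integration over $V$ are routine, paralleling the unconditional argument in the proof of Lemma \ref{lemma2}.
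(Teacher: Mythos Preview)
Your proposal is correct and follows essentially the same route as the paper: the paper's proof simply states the conditional decomposition
\[
f_{(R,Y,T)\mid Z=1,V}-f_{(R,Y,T)\mid Z=0,V}=P^{\ast}(T_0^\ast<T_1^\ast\mid V)\bigl(f_{(R_1,Y_1,T_1)\mid T_0^\ast<T_1^\ast,V}-f_{(R_0,Y_0,T_0)\mid T_0^\ast<T_1^\ast,V}\bigr)
\]
and declares the rest to be identical to Lemma~\ref{lemma2}, which is precisely the program you outline (conditional Imbens--Angrist decomposition, triangle inequality on the two complier densities, then integrate over $V$). Your remark that the argument should be read under the $V$-conditional hypotheses of Theorem~\ref{intergreated_thereom1} is apt, since the lemma's statement invokes $V$ even though Assumption~\ref{assumption8} as written does not.
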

\begin{proof}
The proof is the same as Lemma \ref{lemma2} and this lemma follows from 
$f_{(R,Y,T)\mid Z=1,V}-f_{(R,Y,T)\mid Z=0,V}
=
P^{\ast}(T_0^\ast<T_1^\ast\mid V)(f_{(R_1,Y_1,T_1)\mid T_0^\ast<T_1^\ast,V}-f_{(R_0,Y_0,T_0)\mid T_0^\ast<T_1^\ast,V})$.
\end{proof}

From Lemmas \ref{lemma1} and \ref{lemma4} and Equation \ref{eqIA},  all the three inequalities in Theorem \ref{theorem4} are satisfied for the true value of the local average treatment effect.
To complete Theorem \ref{intergreated_thereom1}, it suffices to show that, for any data generating process $P$, any point $\theta$ satisfying the three inequalities in Theorem \ref{intergreated_thereom1} is the local average treatment effect under some data generating process whose data distribution is equal to $P$. 

Define the two data generating processes: $P_L^{\ast}$ and $P_U^{\ast}$. 
First, $P_L^{\ast}$ is defined by 
\begin{eqnarray*}
&(Z,V)&\sim\ f_{(Z,V)}\\
&(T^\ast_{0},T^\ast_{1})\mid Z,V&=\ (0,1)\\
&(R_0,Y_0,T_0)\mid (T^\ast_{0},T^\ast_{1},Z,V)&\sim\ f_{(R,Y,T)\mid Z=0,V}\\
&(R_1,Y_1,T_1)\mid (T^\ast_{0},T^\ast_{1},Z,V)&\sim\ f_{(R,Y,T)\mid Z=1,V}.
\end{eqnarray*}
Second, $P_U^{\ast}$ is defined as follows. 
Using $\mathrm{sgn}(x)=\mathbbm{1}\{x\geq 0\}-\mathbbm{1}\{x<0\}$, 
define 
$$
H=.5\times\mathrm{sgn}\left(\Delta f_{(R,Y,T)\mid Z,V}(R,Y,T)\right), 
$$
and define $P_U^{\ast}$ as 
\begin{eqnarray*}
&(Z,V)&\sim\ f_{(Z,V)}\\
&(T^\ast_{0},T^\ast_{1})\mid Z,V&=\
\begin{cases}
(0,1)&\mbox{ with probability }\Delta E_P[H\mid Z,V]\\  
(0,0)&\mbox{ with probability }P(H=-.5\mid Z=1,V)\\
(1,1)&\mbox{ with probability }P(H=.5\mid Z=0,V) 
\end{cases}\\
&(R_1,Y_1,T_1)\mid (T^\ast_{0},T^\ast_{1},Z,V)&\sim\ 
\begin{cases}
\frac{\Delta f_{(R,Y,T,H)\mid Z,V}(r,y,t,.5)}{\Delta E_P[H\mid Z,V]}&\mbox{ if }T_0^\ast<T_1^\ast\\
f_{(R,Y,T)\mid H=.5,Z=0,V}(r,y,t)&\mbox{ if }T^\ast_{0}=T^\ast_{1}
\end{cases}\\
&(R_0,Y_0,T_0)\mid (T^\ast_{0},T^\ast_{1},Z,V)&\sim\
\begin{cases} 
-\frac{\Delta f_{(R,Y,T,H)\mid Z,V}(r,y,t,-.5)}{\Delta E_P[H\mid Z,V]}&\mbox{ if }T_0^\ast<T_1^\ast\\
f_{(R,Y,T)\mid H=-.5,Z=1,V}(y,t)&\mbox{ if }T^\ast_{0}=T^\ast_{1}.
\end{cases}
\end{eqnarray*}

\begin{lemma}\label{lemma6}
If $E_P[TV_{(R,Y,T)\mid V}]>0$, then 
(i) $P_L^{\ast}$ generates the data distribution $P$ and the local average treatment effect under $P_L^{\ast}$ is $E_P[\Delta E_P[Y\mid Z,V]]$; and 
(ii) $P_U^{\ast}$ generates the data distribution $P$ and the local average treatment effect under $P_U^{\ast}$ is $E_P[\Delta E_P[Y\mid Z]]/E_P[TV_{(R,Y,T)\mid V}]$.
\end{lemma}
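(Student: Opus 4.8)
The plan is to treat $P_L^{\ast}$ and $P_U^{\ast}$ separately and, for each, to check three things: that it satisfies conditions (i)--(iii) of Theorem \ref{intergreated_thereom1} (hence lies in the relevant class $\mathcal{P}^{\ast}$), that the distribution it induces on $(R,Y,T,Z,V)$ equals $P$, and that its local average treatment effect equals the value claimed. The geometric content behind the two constructions is that $P_L^{\ast}$ makes every unit a complier, so $P^{\ast}(T_0^\ast<T_1^\ast)=1$ and equation \eqref{eqIA} forces $|\theta|$ down to its smallest value $|E_P[\Delta E_P[Y\mid Z,V]]|$; while $P_U^{\ast}$ pushes the complier probability down to the lower bound $E_P[TV_{(R,Y,T)\mid V}]$ supplied by Lemma \ref{lemma4}, so \eqref{eqIA} forces $|\theta|$ up to its largest value.

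For part (i), the analysis of $P_L^{\ast}$ is routine. Monotonicity holds because $(T_0^\ast,T_1^\ast)=(0,1)$ almost surely; condition (i) holds because the specified conditional laws of $(R_0,Y_0,T_0)$ and $(R_1,Y_1,T_1)$ given $(T_0^\ast,T_1^\ast,Z,V)$ are $f_{(R,Y,T)\mid Z=0,V}$ and $f_{(R,Y,T)\mid Z=1,V}$, which are functions of $V$ alone and so do not depend on the realized $Z$; condition (iii) is inherited from $P$ since $(Z,V)$ retains its $P$-marginal. Under $P_L^{\ast}$ one has $T^\ast=T^\ast_Z=Z$, so the observed triple $(R,Y,T)=(R_Z,Y_Z,T_Z)$ has conditional law $f_{(R,Y,T)\mid Z,V}$, i.e.\ $P_L^{\ast}$ induces $P$. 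Since all units are compliers, the local average treatment effect is $E_{P_L^{\ast}}[Y_1-Y_0]=E_P[E_P[Y\mid Z=1,V]]-E_P[E_P[Y\mid Z=0,V]]=E_P[\Delta E_P[Y\mid Z,V]]$.

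Part (ii) is where the work lies, and I would organize it around unwinding the definition of $P_U^{\ast}$. The key preliminary observations are: since $H=\tfrac12\,\mathrm{sgn}(\Delta f_{(R,Y,T)\mid Z,V}(R,Y,T))$ is a deterministic function of $(R,Y,T,V)$, one gets $\Delta E_P[H\mid Z,V]=\tfrac12\int\mathrm{sgn}(\Delta f_{(R,Y,T)\mid Z,V})\,\Delta f_{(R,Y,T)\mid Z,V}\,d\mu=\tfrac12\int|\Delta f_{(R,Y,T)\mid Z,V}|\,d\mu=TV_{(R,Y,T)\mid V}\ge 0$, and, using $E_P[H\mid Z=z,V]=\tfrac12-P(H=-\tfrac12\mid Z=z,V)$, that $\Delta E_P[H\mid Z,V]$, $P(H=-\tfrac12\mid Z=1,V)$, $P(H=\tfrac12\mid Z=0,V)$ are nonnegative and sum to one, so the mixing law of $(T_0^\ast,T_1^\ast)$ given $V$ is a genuine probability distribution depending on $V$ only; moreover $\Delta f_{(R,Y,T,H)\mid Z,V}(\cdot,\tfrac12)=(\Delta f_{(R,Y,T)\mid Z,V})^{+}$ and $-\Delta f_{(R,Y,T,H)\mid Z,V}(\cdot,-\tfrac12)=(\Delta f_{(R,Y,T)\mid Z,V})^{-}$, each integrating to $TV_{(R,Y,T)\mid V}$ because $\int\Delta f_{(R,Y,T)\mid Z,V}\,d\mu=0$, so the complier-conditional laws in the definition are bona fide densities, and together with the non-complier laws $f_{(R,Y,T)\mid H=\tfrac12,Z=0,V}$ and $f_{(R,Y,T)\mid H=-\tfrac12,Z=1,V}$ they are $Z$-free; this delivers (i)--(ii), and (iii) is again inherited. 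To show $P_U^{\ast}$ induces $P$, I would write $f^{\ast}_{(R,Y,T)\mid Z=z,V}$ as the three-type mixture: the complier term is $(\Delta f_{(R,Y,T)\mid Z,V})^{+}$ for $z=1$ and $(\Delta f_{(R,Y,T)\mid Z,V})^{-}$ for $z=0$, the always-taker term is $f_{(R,Y,T)\mid Z=0,V}\mathbbm{1}\{\Delta f_{(R,Y,T)\mid Z,V}\ge 0\}$, and the never-taker term is $f_{(R,Y,T)\mid Z=1,V}\mathbbm{1}\{\Delta f_{(R,Y,T)\mid Z,V}<0\}$; splitting into the regions $\{\Delta f_{(R,Y,T)\mid Z,V}\ge 0\}$ and $\{\Delta f_{(R,Y,T)\mid Z,V}<0\}$ one checks the sum collapses to $f_{(R,Y,T)\mid Z=1,V}$ when $z=1$ and to $f_{(R,Y,T)\mid Z=0,V}$ when $z=0$. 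Finally, for the local average treatment effect, the complier cell contributes $E_{P_U^{\ast}}[\mathbbm{1}\{T_0^\ast<T_1^\ast\}(Y_1-Y_0)\mid V]=\int y\,(\Delta f_{(R,Y,T)\mid Z,V})^{+}\,d\mu-\int y\,(\Delta f_{(R,Y,T)\mid Z,V})^{-}\,d\mu=\int y\,\Delta f_{(R,Y,T)\mid Z,V}\,d\mu=\Delta E_P[Y\mid Z,V]$, and $E_{P_U^{\ast}}[\mathbbm{1}\{T_0^\ast<T_1^\ast\}\mid V]=TV_{(R,Y,T)\mid V}$; averaging over $V$ and dividing gives $E_P[\Delta E_P[Y\mid Z,V]]/E_P[TV_{(R,Y,T)\mid V}]$.

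I expect the main obstacle to be the measure-theoretic care in part (ii), specifically the set of $V$ on which $TV_{(R,Y,T)\mid V}=0$: there the divisions by $\Delta E_P[H\mid Z,V]$ in the definition of $P_U^{\ast}$ are only formal, but on that set $\Delta f_{(R,Y,T)\mid Z,V}\equiv 0$, hence $\Delta E_P[Y\mid Z,V]=0$ and the complier probability is zero, so such $V$ contribute nothing to either the numerator or the denominator and the construction never actually divides by zero; the division by $E_P[TV_{(R,Y,T)\mid V}]$ at the last step is legitimate precisely because the hypothesis is $E_P[TV_{(R,Y,T)\mid V}]>0$. Beyond this, the argument is a careful but routine unwinding of the mixture. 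Combined with the sign-flipped mirror of $P_U^{\ast}$ for the case $E_P[\Delta E_P[Y\mid Z,V]]<0$ and with mixtures of $P_L^{\ast}$ and $P_U^{\ast}$ to reach the interior of the interval, Lemma \ref{lemma6} completes Case 2 of the proof of Theorem \ref{intergreated_thereom1}.
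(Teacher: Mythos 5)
Your proposal is correct and follows essentially the same route as the paper's proof: direct verification that each constructed distribution satisfies the assumptions of Theorem \ref{intergreated_thereom1}, induces the observed law $P$ (via the three-type mixture collapsing on the regions $\{\Delta f_{(R,Y,T)\mid Z,V}\geq 0\}$ and $\{\Delta f_{(R,Y,T)\mid Z,V}<0\}$), and yields the claimed value of the local average treatment effect through the identities $\Delta E_P[H\mid Z,V]=TV_{(R,Y,T)\mid V}$ and $\Delta f_{(R,Y,T,H)\mid Z,V}(\cdot,\pm .5)=\pm(\Delta f_{(R,Y,T)\mid Z,V})^{\pm}$. Your added checks --- that the three type probabilities are nonnegative and sum to one, and that the set of $V$ with $TV_{(R,Y,T)\mid V}=0$ contributes nothing so the formal divisions are harmless --- are worthwhile refinements of points the paper treats only implicitly.
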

\begin{proof}
(i)
Denote by $f^{\ast}$ the density function of $P_L^{\ast}$.
The data generating process $P_L^{\ast}$ generates the data distribution $P$: 
\begin{eqnarray*}
f^{\ast}_{(R,Y,T)\mid Z=0,V}(r,y,t)
&=&
f^{\ast}_{(R,Y,T)\mid T^\ast=0,Z=0,V}(r,y,t)\\
&=&
f^{\ast}_{(R_0,Y_0,T_0)\mid T^\ast=0,Z=0,V}(r,y,t)\\
&=&
f_{(R,Y,T)\mid Z=0,V}(r,y,t)\\
f^{\ast}_{(R,Y,T)\mid Z=1,V}(r,y,t)
&=&
f^{\ast}_{(R,Y,T)\mid T^\ast=1,Z=1,V}(r,y,t)\\
&=&
f^{\ast}_{(R_1,Y_1,T_1)\mid T^\ast=1,Z=1,V}(r,y,t)\\
&=&
f_{(R,Y,T)\mid Z=1,V}(r,y,t)
\end{eqnarray*}
where the first equality uses $T^\ast=Z$. 
Under $P_L^{\ast}$, the local average treatment effect is $E_P[\Delta E_P[Y\mid Z,V]]$: 
\begin{eqnarray*}
E_{P_L^{\ast}}[Y_1-Y_0\mid T_0^\ast<T_1^\ast]
&=&
E_{P_L^{\ast}}[Y_1]-E_{P_L^{\ast}}[Y_0]\\
&=&
E_P[E_P[Y\mid Z=1,V]]-E_P[E_P[Y\mid Z=0,V]]\\
&=&
E_P[\Delta E_P[Y\mid Z,V]].
\end{eqnarray*}
(ii) 
Note that 
\begin{eqnarray*}
\Delta E_P[H\mid Z,V]
&=&
\sum_{t=0,1}\iint\frac{\mathrm{sgn}\left(\Delta f_{(R,Y,T)\mid Z,V}(r,y,t)\right)\Delta f_{(R,Y,T)\mid Z,V}(r,y,t)}{2}
d\mu_Y(y)d\mu_R(r)\\
&=&
\sum_{t=0,1}\iint\frac{|\Delta f_{(R,Y,T)\mid Z,V}(r,y,t)|}{2}d\mu_Y(y)d\mu_R(r)\\
&=&
TV_{(R,Y,T)\mid V}.
\end{eqnarray*}
Denote by $f^{\ast}$ the density function of $P_U^{\ast}$.
Notice that $f^{\ast}_{(R_{t^\ast},Y_{t^\ast},T_{t^\ast})\mid (T^\ast_{0},T^\ast_{1},Z,V)}$ is well-defined because  
\begin{eqnarray*}
&&\Delta E_P[H\mid Z,V]=TV_{(R,Y,T)\mid V}>0\\
&&\Delta f_{(R,Y,T,H)\mid Z,V}(r,y,t,.5)=\Delta f_{(R,Y,T)\mid Z,V}(r,y,t)\mathbbm{1}\{\Delta f_{(R,Y,T)\mid Z,V}(r,y,t)\geq 0\}\geq 0\\
&&\Delta f_{(R,Y,T,H)\mid Z,V}(r,y,t,-.5)=\Delta f_{(R,Y,T)\mid Z,V}(r,y,t)\mathbbm{1}\{\Delta f_{(R,Y,T)\mid Z,V}(r,y,t)<0\}<0.
\end{eqnarray*}
$P_U^{\ast}$ generates the data distribution $P$: 
\begin{eqnarray*}
f^{\ast}_{(R,Y,T)\mid Z=0,V}(r,y,t)
&=&
P_U^{\ast}(T_0^\ast<T_1^\ast\mid Z=0,V)f^{\ast}_{(R,Y,T)\mid T_0^\ast<T_1^\ast,Z=0,V}(r,y,t)\\
&&+P_U^{\ast}(T_1^\ast=T_0^\ast=0\mid Z=0,V)f^{\ast}_{(R,Y,T)\mid T_1^\ast=T_0^\ast=0,Z=0,V}(r,y,t)\\
&&+P_U^{\ast}(T_1^\ast=T_0^\ast=1\mid Z=0,V)f^{\ast}_{(R,Y,T)\mid T_1^\ast=T_0^\ast=1,Z=0,V}(r,y,t)\\
&=&
P_U^{\ast}(T_0^\ast<T_1^\ast\mid V)f^{\ast}_{(R_0,Y_0,T_0)\mid T_0^\ast<T_1^\ast,V}(r,y,t)\\
&&+P_U^{\ast}(T_1^\ast=T_0^\ast=0\mid V)f^{\ast}_{(R_0,Y_0,T_0)\mid T_1^\ast=T_0^\ast=0,V}(r,y,t)\\
&&+P_U^{\ast}(T_1^\ast=T_0^\ast=1\mid V)f^{\ast}_{(R_1,Y_1,T_1)\mid T_1^\ast=T_0^\ast=1,V}(r,y,t)\\
&=&
-\Delta E_P[H\mid Z,V]\frac{\Delta f_{(R,Y,T,H)\mid Z,V}(r,y,t,-.5)}{\Delta E_P[H\mid Z,V]}\\
&&+P(H=-.5\mid Z=1,V)f_{(R,Y,T)\mid H=-.5,Z=1,V}(r,y,t)\\
&&+P(H=.5\mid Z=0,V)f_{(R,Y,T)\mid H=.5,Z=0,V}(r,y,t)\\
&=&
-(f_{(R,Y,T,H)\mid Z=1,V}(r,y,t,-.5)-f_{(R,Y,T,H)\mid Z=0,V}(r,y,t,-.5))\\
&&+f_{(R,Y,T,H)\mid Z=1,V}(r,y,t,-.5)\\
&&+f_{(R,Y,T,H)\mid Z=0,V}(r,y,t,.5)\\
&=&
f_{(R,Y,T)\mid Z=0,V}(r,y,t),
\end{eqnarray*}
and similarly $f^{\ast}_{(R,Y,T)\mid Z=1,V}(r,y,t)=f_{(R,Y,T)\mid Z=1,V}(r,y,t)$.
Under $P_U^{\ast}$, the local average treatment effect is 
\begin{eqnarray*}
\theta(P_U^{\ast})
&=&
E_{P_U^{\ast}}[Y_1-Y_0\mid T_0^\ast<T_1^\ast]\\
&=&
E_{P_U^{\ast}}[E_{P_U^{\ast}}[Y_1\mid T_0^\ast<T_1^\ast,V]\mid T_0^\ast<T_1^\ast]\\&&-E_{P_U^{\ast}}[E_{P_U^{\ast}}[Y_0\mid T_0^\ast<T_1^\ast,V]\mid T_0^\ast<T_1^\ast]\\
&=&
E_{P_U^{\ast}}[\sum_{t=0,1}\iint y\frac{\Delta f_{(R,Y,T,H)\mid Z,V}(r,y,t,.5)}{\Delta E_P[H\mid Z,V]}d\mu_Y(y)d\mu_R(r)\mid T_0^\ast<T_1^\ast]\\&&+E_{P_U^{\ast}}[\sum_{t=0,1}\iint y\frac{\Delta f_{(R,Y,T,H)\mid Z,V}(r,y,t,-.5)}{\Delta E_P[H\mid Z,V]}d\mu_Y(y)d\mu_R(r)\mid T_0^\ast<T_1^\ast]\\
&=&
E_{P_U^{\ast}}[\sum_{t=0,1}\iint y\frac{\Delta f_{(R,Y,T)\mid Z,V}(r,y,t)}{\Delta E_P[H\mid Z,V]}d\mu_Y(y)d\mu_R(r)\mid T_0^\ast<T_1^\ast]\\
&=&
E_{P_U^{\ast}}[\sum_{t=0,1}\iint y\frac{\Delta f_{(R,Y,T)\mid Z,V}(r,y,t)}{\Delta E_P[H\mid Z,V]}d\mu_Y(y)d\mu_R(r)\frac{P_U^{\ast}(T_0^\ast<T_1^\ast\mid V)}{E_{P_U^{\ast}}[P_U^{\ast}(T_0^\ast<T_1^\ast\mid V)]}]\\
&=&
E_{P_U^{\ast}}[\sum_{t=0,1}\iint y\frac{\Delta f_{(R,Y,T)\mid Z,V}(r,y,t)}{E_{P_U^{\ast}}[P_U^{\ast}(T_0^\ast<T_1^\ast\mid V)]}d\mu_Y(y)d\mu_R(r)]\\
&=&
\frac{E_{P_U^{\ast}}[\sum_{t=0,1}\iint y\Delta f_{(R,Y,T)\mid Z,V}(r,y,t)d\mu_Y(y)d\mu_R(r)]}{E_{P_U^{\ast}}[P_U^{\ast}(T_0^\ast<T_1^\ast\mid V)]}\\
&=&
\frac{E_P[\Delta E_P[Y\mid Z,V]]}{E_P[\Delta E_P[H\mid Z,V]]}\\
&=&
\frac{E_P[\Delta E_P[Y\mid Z,V]]}{E_P[TV_{(R,Y,T)\mid V}]}
\end{eqnarray*}
where the fifth equality comes from Bayes' theorem. 
\end{proof}

Theorem \ref{intergreated_thereom1} follows from the next lemma. 
\begin{lemma}
If $E_P[TV_{(R,Y,T)\mid V}]>0$, then, for every $\lambda\in[0,1]$,  
(i) the mixture distribution $\lambda P_L^{\ast}+(1-\lambda)P_U^{\ast}$ satisfies Assumption \ref{assumption1}; 
(ii) $\lambda P_L^{\ast}+(1-\lambda)P_U^{\ast}$ generates the data distribution $P$; 
(iii) under $\lambda P_L^{\ast}+(1-\lambda)P_U^{\ast}$, the local average treatment effect is 
$$
\lambda E_P[\Delta E_P[Y\mid Z,V]]+(1-\lambda)\frac{E_P[\Delta E_P[Y\mid Z,V]]}{E_P[TV_{(R,Y,T)\mid V}]}.
$$
\end{lemma}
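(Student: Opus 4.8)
\emph{Proof plan.} The idea is to exploit the fact that all three claims interact well with finite mixtures, precisely because $P_L^{\ast}$ and $P_U^{\ast}$ were constructed to share the same marginal law $f_{(Z,V)}$ of $(Z,V)$ and, by Lemma \ref{lemma6}, to induce the same observable distribution $P$. I would represent the mixture by a latent selector $S\in\{0,1\}$ with $P(S=1)=\lambda$, where $S=1$ means ``draw from $P_L^{\ast}$'' and $S=0$ means ``draw from $P_U^{\ast}$''; integrating out $S$ recovers $\lambda P_L^{\ast}+(1-\lambda)P_U^{\ast}$.

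For part (i), monotonicity and overlap pass to the mixture immediately: under each of $P_L^{\ast}$ and $P_U^{\ast}$ the support of $(T_0^{\ast},T_1^{\ast})$ is contained in $\{(0,0),(0,1),(1,1)\}$, so $T_1^{\ast}\geq T_0^{\ast}$ almost surely, and $0<\pi(V)<1$ is the common conditional law of $Z$ given $V$ in both components. The delicate point is the exclusion restriction, since conditional independence is not generally preserved under mixing. Here I would use that, because the conditional law of $Z$ given $V$ equals $\pi(V)$ in both components, it does not depend on $S$, so $S$ is independent of $Z$ given $V$ (indeed of $(Z,V)$). Writing $W^{c}=(R_{t^{\ast}},T_{t^{\ast}},Y_{t^{\ast}},T_0^{\ast},T_1^{\ast})$ and conditioning first on $S$, the within-component relation $Z\perp W^{c}\mid V$ together with $Z\perp S\mid V$ gives $f(W^{c},Z\mid V)=\sum_{s}P(S=s)f(W^{c}\mid V,S=s)f(Z\mid V)=f(Z\mid V)f(W^{c}\mid V)$, i.e. $Z\perp W^{c}\mid V$ under the mixture, which is the exclusion restriction of Theorem \ref{intergreated_thereom1}.

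Part (ii) is a one-line consequence of the observation that $P^{\ast}\mapsto P^{\ast}_{(R,Y,T,Z,V)}$ is a pushforward along a fixed measurable map and hence affine in $P^{\ast}$; since $P_L^{\ast}$ and $P_U^{\ast}$ both induce $P$ by Lemma \ref{lemma6}, the mixture induces $\lambda P+(1-\lambda)P=P$. For part (iii), I would write the local average treatment effect as the ratio
$$
\theta(P^{\ast})=\frac{E_{P^{\ast}}[(Y_1-Y_0)\mathbbm{1}\{T_0^{\ast}<T_1^{\ast}\}]}{P^{\ast}(T_0^{\ast}<T_1^{\ast})},
$$
whose numerator and denominator are each linear functionals of $P^{\ast}$, so for the mixture each equals the $\lambda$-weighted average of its two component values. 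From Lemma \ref{lemma6} and the constructions, $P_L^{\ast}(T_0^{\ast}<T_1^{\ast})=1$ with $\theta(P_L^{\ast})=E_P[\Delta E_P[Y\mid Z,V]]$, while $P_U^{\ast}(T_0^{\ast}<T_1^{\ast})=E_P[TV_{(R,Y,T)\mid V}]$ with $\theta(P_U^{\ast})=E_P[\Delta E_P[Y\mid Z,V]]/E_P[TV_{(R,Y,T)\mid V}]$; substituting these into the mixed numerator and denominator and simplifying yields the stated expression, and letting $\lambda$ range over $[0,1]$ sweeps out the whole interval of Theorem \ref{intergreated_thereom1}, which together with Case 1 completes the characterization of $\Theta_I(P)$.

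I expect the main obstacle to be the exclusion-restriction half of part (i): one must verify that conditional independence genuinely survives the mixing, and this rests squarely on the two components having a common conditional law of $Z$ given $V$ (so that the mixing variable is conditionally independent of $Z$). The remainder is bookkeeping — affineness of the pushforward in (ii) and of the numerator/denominator of the LATE in (iii) — with the only care needed being the tracking of the complier probabilities $1$ and $E_P[TV_{(R,Y,T)\mid V}]$ under $P_L^{\ast}$ and $P_U^{\ast}$.
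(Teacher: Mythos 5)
Your treatment of parts (i) and (ii) is correct and is essentially the paper's own argument, made more explicit: the paper likewise notes that both components share the marginal law $f_{(Z,V)}$ and concludes that the conditional independence survives the mixing, and your latent-selector formalization (with the selector $S$ independent of $(Z,V)$ precisely because the $(Z,V)$-law is common to both components) is exactly the right way to fill in that step. Part (ii) is, as you say, just affineness of the pushforward, in both your write-up and the paper's.

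Part (iii), however, contains a step that fails. Write $\Delta=E_P[\Delta E_P[Y\mid Z,V]]$ and $\tau=E_P[TV_{(R,Y,T)\mid V}]$. Your (correct) ratio-of-linear-functionals computation gives numerator $\lambda\Delta\cdot 1+(1-\lambda)\cdot(\Delta/\tau)\cdot\tau=\Delta$ and denominator $\lambda\cdot 1+(1-\lambda)\tau$, hence
$$
\theta\bigl(\lambda P_L^{\ast}+(1-\lambda)P_U^{\ast}\bigr)=\frac{\Delta}{\lambda+(1-\lambda)\tau},
$$
which is \emph{not} the stated $\lambda\Delta+(1-\lambda)\Delta/\tau$ except at $\lambda\in\{0,1\}$ (or when $\tau=1$ or $\Delta=0$); for instance $\lambda=\tau=1/2$ gives $\tfrac{4}{3}\Delta$ versus $\tfrac{3}{2}\Delta$. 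A conditional expectation is a ratio of two linear functionals of $P^{\ast}$ and is therefore not affine in the mixing weight, so ``the LATE of the mixture is the mixture of the LATEs'' is false, and your assertion that the substitution ``yields the stated expression'' is exactly where the argument breaks. The damage is harmless for Theorem \ref{intergreated_thereom1}: both $\lambda\mapsto\Delta/(\lambda+(1-\lambda)\tau)$ and $\lambda\mapsto\lambda\Delta+(1-\lambda)\Delta/\tau$ are continuous on $[0,1]$ and sweep out the same interval with endpoints $\Delta$ and $\Delta/\tau$, so sharpness of $\Theta_I(P)$ still follows. But to prove the lemma as literally stated you must either reparametrize (given a target value, choose $\lambda'$ solving $\Delta/(\lambda'+(1-\lambda')\tau)=\lambda\Delta+(1-\lambda)\Delta/\tau$, which lies in $[0,1]$ whenever the target lies in the interval) or restate (iii) with the harmonic form above; the paper's own one-line proof of (iii) glosses over the same point.
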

\begin{proof}
(i) 
Under both $P_L^{\ast}$ and $P_U^{\ast}$, $Z$ is independent of $(R_{t^\ast},T_{t^\ast},Y_{t^\ast},T^\ast_{0},T^\ast_{1})$ given $V$ for each $t^\ast=0,1$. 
Furthermore, $P_L^{\ast}$ and $P_U^{\ast}$ have the same marginal distribution for $(Z,V)$: $f_{Z,V}$. 
Therefore, the mixture of $P_L^{\ast}$ and $P_U^{\ast}$ also satisfies the independence. 
Since both$P_L^{\ast}$ and $P_U^{\ast}$ satisfy $T^\ast_{1}\geq T^\ast_{0}$ almost surely, so does the mixture. 
(ii) By Lemma \ref{lemma6}, both $P_L^{\ast}$ and $P_U^{\ast}$ generate the data distribution $P$ and so does the mixture. 
(iii) It follows from the last statement in Lemma \ref{lemma6}. 
\end{proof}

\subsection*{Proof of Theorem \ref{theorem1-less}}
The proof of Theorems \ref{theorem1-less} is similar to Theorems \ref{intergreated_thereom1}. 
Only the difference is to change the definition of $P_U^{\ast}$ as follows. 
Define $H=.5\times\mathrm{sgn}\left(\Delta f_{Y\mid Z}(Y)\right)$ and define $P_U^{\ast}$ as 
\begin{eqnarray*}
&Z&\sim\ P(Z=z)\\
&(T^\ast_{0},T^\ast_{1})\mid Z&=\ 
\begin{cases}
(0,1)&\mbox{ with probability }\Delta E_P[H\mid Z]\\  
(0,0)&\mbox{ with probability }P(H=-.5\mid Z=1)\\
(1,1)&\mbox{ with probability }P(H=.5\mid Z=0) 
\end{cases}\\
&(Y_1,T_1)\mid (T^\ast_{0},T^\ast_{1},Z=z)&\sim\ 
\begin{cases}
f_{T\mid Y=y,Z=z}(t)\frac{\Delta f_{(Y,H)\mid Z}(y,.5)}{\Delta E_P[H\mid Z]}&\mbox{ if }T_0^\ast<T_1^\ast\\
f_{T\mid Y=y,Z=z}(t)f_{Y\mid H=.5,Z=0}(y)&\mbox{ if }T^\ast_{0}=T^\ast_{1}\\
\end{cases}\\
&(Y_0,T_0)\mid (T^\ast_{0},T^\ast_{1},Z=z)&\sim\ 
\begin{cases} 
-f_{T\mid Y=y,Z=z}(t)\frac{\Delta f_{(Y,H)\mid Z}(y,-.5)}{\Delta E_P[H\mid Z]}&\mbox{ if }T_0^\ast<T_1^\ast\\
f_{T\mid Y=y,Z=z}(t)f_{Y\mid H=-.5,Z=1}(y)&\mbox{ if }T^\ast_{0}=T^\ast_{1}\\
\end{cases}
\end{eqnarray*}

\subsection*{Proof of Lemma \ref{lemma3}}
First, note that  
\begin{equation}
TV_{(Y,T)\mid V}=\sup_{h\in\mathbf{H}}\Delta E_P[h(Y,T,V)\mid Z,V].\label{sup_TV}
\end{equation}
This is verified as follows. 
For every $h\in\mathbf{H}$, I have 
\begin{eqnarray*}
\Delta E_P[h(Y,T,V)\mid Z,V]
&=& 
E_P[h(Y,T,V)\mid Z=1,V]-E_P[h(Y,T,V)\mid Z=0,V]\\
&=& 
\sum_{t=0,1}\int h(y,t,V)(f_{(Y,T)\mid Z=1,V}(y,t)-f_{(Y,T)\mid Z=0,V}(y,t))d\mu_Y(y)\\
&=& 
\sum_{t=0,1}\int h(y,t,V)\Delta f_{(Y,T)\mid Z,V}(y,t)d\mu_Y(y)\\
&\leq& 
\frac{1}{2}\sum_{t=0,1}\int|\Delta f_{(Y,T)\mid Z,V}(y,t)|d\mu_Y(y)\\
&=& 
TV_{(Y,T)\mid V}. 
\end{eqnarray*}
Moreover, the above inequality becomes an equality if $h(y,t,v)=.5$ if $\Delta f_{(Y,T)\mid Z,V=v}(y,t)>0$ and $h(y,t,v)=-.5$ if $\Delta f_{(Y,T)\mid Z,V=v}(y,t)<0$.

\cite{abadie:2003} and \cite{frolich:2007} show that 
\begin{equation}
E_P[\Delta E_P[X\mid Z,V]]=E_P\left[\frac{Z-\pi(V)}{\pi(V)(1-\pi(V))}X\right]\label{Delta_FRAC}
\end{equation}
for any random variable $X$. The proof is as follows. 
\begin{eqnarray*}
E_P\left[\frac{Z-\pi(V)}{\pi(V)(1-\pi(V))}X\right]
&=&
E_P\left[\frac{(1-\pi(V))Z+(Z-1)\pi(V)}{\pi(V)(1-\pi(V))}X\right]\\
&=&
E_P\left[\frac{Z}{\pi(V)}X\right]-E_P\left[\frac{1-Z}{(1-\pi(V))}X\right]\\
&=&
E_P\left[\frac{E_P[ZX\mid V]}{\pi(V)}\right]-E_P\left[\frac{E_P[(1-Z)X\mid V]}{(1-\pi(V))}\right]\\
&=&
E_P\left[\frac{\pi(V)E_P[X\mid Z=1,V]}{\pi(V)}\right]\\&&\quad-E_P\left[\frac{(1-\pi(V))E_P[X\mid Z=0,V]}{(1-\pi(V))}\right]\\
&=&
E_P\left[E_P[X\mid Z=1,V]-E_P[X\mid Z=0,V]\right]\\
&=&
E_P\left[\Delta E_P[X\mid Z,V]\right].
\end{eqnarray*}

By Theorem \ref{intergreated_thereom1} and Equation (\ref{sup_TV}), $\Theta_I(P)$ is characterized by 
\begin{eqnarray*}
&&|\theta|E_P[\Delta E_P[h(Y,T,V)\mid Z,V]]\leq |E_P[\Delta E_P[Y\mid Z,V]]|\mbox{ for every }h\in\mathbf{H}\\
&&\theta E_P[\Delta E_P[Y\mid Z,V]]\geq 0\\
&&|\theta|\geq |E_P[\Delta E_P[Y\mid Z,V]]|.
\end{eqnarray*} 
Since the second condition implies $\mathrm{sgn}(\theta)=\mathrm{sgn}(E_P[\Delta E_P[Y\mid Z,V]])$, 
the above three conditions becomes 
\begin{eqnarray*}
&&|\theta|E_P[\Delta E_P[h(Y,T,V)\mid Z,V]]\leq \mathrm{sgn}(\theta)E_P[\Delta E_P[Y\mid Z,V]]\mbox{ for every }h\in\mathbf{H}\\
&&\mathrm{sgn}(\theta)E_P[\Delta E_P[Y\mid Z,V]]\geq 0\\
&&|\theta|\geq\mathrm{sgn}(\theta)E_P[\Delta E_P[Y\mid Z,V]].
\end{eqnarray*}
By Equation (\ref{Delta_FRAC}), $\Theta_I(P)$ is characterized as in Lemma \ref{lemma3}.

\subsection*{Proof of Lemma \ref{H_example}}
Condition (i) implies Assumption \ref{H_conditions} (i).
Condition (ii) implies Assumption \ref{H_conditions} (iii).
The rest of the proof is going to show Assumption \ref{H_conditions} (ii).
Define 
$$
h_P^{\ast}(y,t,v)=.5\times\mathrm{sgn}\left(\Delta f_{(Y,T)\mid Z,V=v}(y,t)\right).
$$ 
Then $h_P^{\ast}=\argmax_{h\in\mathbf{H}}E_P\left[\frac{Z-\pi(V)}{\pi(V)(1-\pi(V))}h(Y,T,V)\right]$.
Define 
$$
h_{P,n}^{\ast}=\argmax_{h\in\mathbf{H}_n}E_P[\Delta E_P[h(Y,T)\mid Z,V]].
$$
By the H\"older continuity of $f_{(Y,T)\mid Z,V}$, 
$$
\max_{(y,t,v)\in I_{n,k}}\Delta f_{(Y,T)\mid Z,V=v}(y,t)-\min_{(y,t,v)\in I_{n,k}}\Delta f_{(Y,T)\mid Z,V=v}(y,t)\leq 2D_0\left(2\frac{D_1}{K_n}\right)^d.
$$
Define $D_n=2D_0\left(2\frac{D_1}{K_n}\right)^d$.
For $I_{n,k}$ with $\sup_{(y,t,v)\in I_{n,k}}|\Delta f_{(Y,T)\mid Z,V=v}(y,t)|>D_n$, the above inequality implies that  the sign of $\Delta f_{(Y,T)\mid Z,V=v}(y,t)$ is constant on $I_{n,k}$. 
For those $I_{n,k}$, $h^{\ast}=h_n^{\ast}$ on $I_{n,k}$. 
Then, on every $I_{n,k}$, either $h^{\ast}=h_n^{\ast}$ or $|\Delta f_{(Y,T)\mid Z,V=v}(y,t)|\leq D_n$. 
Therefore
$$
h_{P,n}^{\ast}(y,t,v)\Delta f_{(Y,T)\mid Z,V=v}(y,t)\geq h_P^{\ast}(y,t,v)\Delta f_{(Y,T)\mid Z,V=v}(y,t)+0.5\times D_n.
$$
Since 
\begin{eqnarray*}
&&
E_P\left[\frac{Z-\pi(V)}{\pi(V)(1-\pi(V))}h(Y,T,V)\right]\\
&&\qquad=
E_P\left[\Delta E_P\left[h(Y,T,V)\mid Z,V\right]\right]\\
&&\qquad=
\iiint h(y,t,v)\Delta f_{(Y,T)\mid Z,V=v}(y,t)f_V(v)\mu_Y(dy)\mu_T(dt)\mu_V(dv)\\
&&\qquad=
\sum_{k=1}^{K_n}\iiint_{I_{n,k}}h(y,t,v)\Delta f_{(Y,T)\mid Z,V=v}(y,t)f_V(v)\mu_Y(dy)\mu_T(dt)\mu_V(dv),
\end{eqnarray*}
it follows that 
$$
E_P\left[\frac{Z-\pi(V)}{\pi(V)(1-\pi(V))}h_{P,n}^{\ast}(Y,T,V)\right]\geq E_P\left[\frac{Z-\pi(V)}{\pi(V)(1-\pi(V))}h_P^{\ast}(Y,T,V)\right]+D_n.
$$
Since $D_n$ converges to zero uniformly over $P$, Assumption \ref{H_conditions} (ii) holds.

\subsection*{Proof of Theorem \ref{fixed_alternative}}
The following theorem is taken from Corollary 5.1 and Theorem 6.1 in \cite{chernozhukov/chetverikov/kato:2014}.  
\begin{theorem}\label{bootstrap_theorem}
Given $\varepsilon_n>0$ with $\varepsilon_n\rightarrow 0$ and $\varepsilon_n\sqrt{\log p_n}\rightarrow\infty$, denote by $\mathcal{H}_{1,n}$ the set of $(\theta,\pi,P)\in\Theta\times\Pi\times\mathcal{P}_0$ for which $\pi=P(Z\mid V=\cdot)$ and 
\begin{equation}\label{local_alternatives}
\max_{j=1,\ldots,p_n}m_j(\theta)/\sigma_j(\theta)\geq(1+\varepsilon_n)\sqrt{2\log(p_n/\alpha)/n}.
\end{equation}
Under the assumptions in Theorem \ref{fixed_alternative}, 
\begin{eqnarray*}
(i)&&\liminf_{n\rightarrow\infty}\inf_{(\theta,\pi,P)\in\Theta\times\Pi\times\mathcal{P}_0\text{ s.t. } \theta\in\Theta_I(P) \text{ and } \pi=P(Z\mid V)}P(T(\theta,\pi)\leq c(\alpha,\theta,\pi))\geq 1-\alpha\\
(ii)&&\lim_{n\rightarrow\infty}\sup_{(\theta,P)\in\mathcal{H}_{1,n}}P(T(\theta,\pi)\leq c(\alpha,\theta,\pi))=0. 
\end{eqnarray*}
\end{theorem}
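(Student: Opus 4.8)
The plan is to recognize Theorem \ref{bootstrap_theorem} as a direct transcription of the many-moment-inequality results of \cite{chernozhukov/chetverikov/kato:2014} (their Corollary 5.1 for size and Theorem 6.1 for power), and to discharge it by verifying that the studentized moment functions $g_1,\dots,g_{p_n}$ satisfy the regularity and rate conditions those results require, with constants that are uniform over the index set $\Theta\times\Pi\times\mathcal{P}_0$. For each fixed triple $(\theta,\pi,P)$ the summands $\{g_j(W_i,\theta,\pi)\}_{i=1}^n$ are i.i.d.\ across $i$, and $T(\theta,\pi)$ together with $c(\alpha,\theta,\pi)$ is exactly the one-step multiplier-bootstrap statistic and critical value that \cite{chernozhukov/chetverikov/kato:2014} analyze; so the only substantive work is to check their hypotheses hold, and hold uniformly in $(\theta,\pi,P)$.

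First I would exhibit a single envelope that dominates all $p_n$ summands simultaneously. Because Assumption \ref{asympt_assumption}(v) gives $\eta<\pi(V)<1-\eta$, the weight obeys $|(Z-\pi(V))/(\pi(V)(1-\pi(V)))|\leq (\eta(1-\eta))^{-1}$; and since each $h_j$ takes values in $\{-.5,.5\}$, $\Theta$ is bounded (Assumption \ref{asympt_assumption}(ii)), and $\mathrm{sgn}(\theta)\in\{-1,1\}$, every moment function is dominated by $\bar G(W)=(\eta(1-\eta))^{-1}|Y|+C$ for a constant $C$ depending only on $\sup_{\theta\in\Theta}|\theta|$. The key point is that $\bar G$ depends on neither $j$, $\theta$, $\pi$, nor $P$, so the envelope moments do not grow with the number of inequalities. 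Assumption \ref{asympt_assumption}(i), which bounds $E_P[Y^3]$ and $E_P[Y^4]$, then yields $\sup_{P\in\mathcal{P}_0}E_P[\bar G^4]<\infty$ and hence uniform bounds on the studentized third and fourth moments and on $E_P[\max_j g_j^4]$. Assumption \ref{asympt_assumption}(iii) supplies the matching non-degenerate variance $\sigma_j^2(\theta,\pi)>0$, and I would argue this floor is uniform over the index set so that studentization is well behaved.

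Next I would match the rate condition. In the bounded-moment regime, the Gaussian and bootstrap couplings in \cite{chernozhukov/chetverikov/kato:2014} require essentially $\log^{7/2}(p_n n)/\sqrt n\to0$ once the envelope moments are bounded, which is precisely Assumption \ref{H_conditions}(iii) ($\log^{7/2}(p_n n)\leq C_1 n^{1/2-c_1}$), while the companion bound $\log^{1/2}p_n\leq C_1 n^{1/2-c_1}$ guarantees the separation $\sqrt{2\log(p_n/\alpha)/n}\to0$ needed in the power argument. With the envelope, variance, and rate conditions in hand, part (i) follows from Corollary 5.1: on the null region $\{\theta\in\Theta_I(P)\}$, which by Lemma \ref{lemma3} is exactly $\{m_j(\theta,\pi)\leq0\text{ for all }j\}$, the multiplier-bootstrap critical value controls size uniformly, giving $\liminf\geq1-\alpha$. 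Part (ii) follows from Theorem 6.1: any sequence in $\mathcal{H}_{1,n}$ violates some inequality by at least $(1+\varepsilon_n)\sqrt{2\log(p_n/\alpha)/n}$ with $\varepsilon_n\to0$ and $\varepsilon_n\sqrt{\log p_n}\to\infty$, which is exactly the alternative separation against which their test is consistent, so $\sup P(T\leq c)\to0$.

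The main obstacle is bookkeeping rather than a new idea: ensuring that every constant entering the \cite{chernozhukov/chetverikov/kato:2014} conditions is uniform over $\Theta\times\Pi\times\mathcal{P}_0$. In particular I would take care that the variance floor in Assumption \ref{asympt_assumption}(iii) does not degenerate as $\theta$ approaches a value where $\mathrm{sgn}(\theta)$ flips or $|\theta|$ becomes small, and that the single envelope $\bar G$ genuinely absorbs all $p_n$ summands so that $E_P[\max_j g_j^4]$ stays bounded as $p_n$ diverges. Once these uniformities are confirmed, the two conclusions are immediate applications of the cited results.
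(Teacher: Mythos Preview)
Your proposal is correct and takes the same approach as the paper: the paper simply states that Theorem \ref{bootstrap_theorem} ``is taken from Corollary 5.1 and Theorem 6.1 in \cite{chernozhukov/chetverikov/kato:2014}'' and offers no further argument, so your verification of the envelope, variance, and rate conditions is in fact more detailed than what the paper provides. The only caveat is that the paper treats the uniform variance floor from Assumption \ref{asympt_assumption}(iii) as given rather than deriving it, so your concern about degeneration near $\theta=0$ is a point the paper does not address either.
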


Theorem \ref{fixed_alternative} (i) follows from 
\begin{eqnarray*}
P(\theta\not\in\mathcal{C}_{\theta,n}(\alpha+\delta))
&\leq&
P(\theta\not\in\mathcal{C}_{\theta,n}(\alpha+\delta) \& \pi\in\mathcal{C}_{\pi,n}(\delta))+P(\pi\not\in\mathcal{C}_{\pi,n}(\delta))\\
&\leq&
P(T(\theta,\pi)>c(\alpha,\theta,\pi) \& \pi\in\mathcal{C}_{\pi,n}(\delta))+P(\pi\not\in\mathcal{C}_{\pi,n}(\delta))\\
&=&
P(T(\theta,\pi)>c(\alpha,\theta,\pi))+P(\pi\not\in\mathcal{C}_{\pi,n}(\delta))\\
&\leq&
\alpha+\delta,
\end{eqnarray*}
where the last inequality comes from Theorem \ref{bootstrap_theorem} (i) and Assumption \ref{asympt_assumption} (iv).

Theorem \ref{fixed_alternative} (ii) is shown as follows. 
Denote by $D_2$ a constant for which $\sigma_j(\cdot)<D_2$.
Let $(\theta,P)$ be any element of $\Theta\times\mathcal{P}_0$ with $\theta\not\in\Theta_I(P)$.
It suffices to show that (\ref{local_alternatives}) holds for sufficiently large $n$. 
If either (\ref{ID_Cond1}) or (\ref{ID_Cond2}) is violated, then (\ref{local_alternatives}) holds for sufficiently large $n$. 
In the rest of the proof, I focus on the case where (\ref{ID_Cond3}) is violated. 
That is, 
$$
\sup_{h\in\mathbf{H}}E_P\left[\frac{Z-\pi(V)}{\pi(V)(1-\pi(V))}\left(|\theta|h(Y,T,V)\right)-\mathrm{sgn}(\theta)Y\right]>0.
$$
Since $\mathbf{H}_n$ converges to $\mathbf{H}$ in the sense of (\ref{approx_vanish}) and $\frac{Z-\pi(V)}{\pi(V)(1-\pi(V))}|\theta|$ is bounded, it follows that, for sufficiently large $n$, there is $h\in\mathbf{H}_n$ such that 
$$
E_P\left[\frac{Z-\pi(V)}{\pi(V)(1-\pi(V))}\left(|\theta|h(Y,T,V)-\mathrm{sgn}(\theta)Y\right)\right]>0.
$$
Denoted by $\kappa$ the value of the left-hand side in the above inequality. 
For sufficiently large $n$, there is $j=3,\ldots,p_n$ such that $m_j(\theta)>\kappa$. 
For such $j$, $m_j(\theta)/\sigma_j(\theta)>\kappa/D_2$.
Therefore (\ref{local_alternatives}) holds for sufficiently large $n$.

\section*{Appendix C: Additional assumptions on the measurement error}
This section considers widely-used assumptions on the measurement structure: (i) a positive correlation between the measurement and the truth and (ii) the measurement error is independent of the other error in the simultaneous equation  system in (\ref{measurement})-(\ref{treatment_assignment}).
\begin{assumption}\label{assumption-missclass}
(i) $1-P(T_0=1)-P(T_1=0)<1$.
(ii) $T_{t^\ast}$ is independent of $(Y_{t^\ast},T^\ast_{0},T^\ast_{1})$ for each $t^\ast=0,1$.
\end{assumption} 
Assumption \ref{assumption-missclass} (i) has been used in the literature on measurement error, e.g., \citealp{mahajan:2006}; \citealp{lewbel:2007}; and \citealp{hu:2008}.
Assumption \ref{assumption-missclass} (ii) is that the measured treatment $T$ is positively correlated with the true variable $T^\ast$ as in \cite{hausman/abrevaya/schott-morton:1998}.

Unlike Assumption \ref{assumption1} itself, the combination of Assumptions \ref{assumption1} and \ref{assumption-missclass} yields on restrictions on the distribution for the observed variables. 
\begin{lemma}\label{more_micsc}
Suppose that Assumptions \ref{assumption1} and \ref{assumption-missclass} hold, and consider an arbitrary data distribution $P$ of $(Y,T,Z)$. Then 
$$
\Delta f_{(Y,T)\mid Z}(y,1)-\Delta f_{(Y,T)\mid Z}(y,0)\geq 0.
$$
\end{lemma}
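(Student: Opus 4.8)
The plan is to combine the complier mixture decomposition already worked out in the proof of Lemma~\ref{lemma2} with the non-differential measurement structure of Assumption~\ref{assumption-missclass}(ii), and then to read off the sign from Assumption~\ref{assumption-missclass}(i). Three short steps.

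Step one: reuse, essentially verbatim, the identity established in the proof of Lemma~\ref{lemma2}. Writing $f_{(Y,T)\mid Z=z}$ as a mixture over always-takers, compliers, and never-takers and cancelling the two type-invariant pieces gives
$$
\Delta f_{(Y,T)\mid Z}(y,t)=P^{\ast}(T_0^\ast<T_1^\ast)\bigl(f_{(Y_1,T_1)\mid T_0^\ast<T_1^\ast}(y,t)-f_{(Y_0,T_0)\mid T_0^\ast<T_1^\ast}(y,t)\bigr).
$$
If $P^{\ast}(T_0^\ast<T_1^\ast)=0$ both sides of the claimed inequality vanish, so from here on I assume it is positive, which also makes the complier-conditional densities well defined.

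Step two: invoke Assumption~\ref{assumption-missclass}(ii). Since $T_{t^\ast}$ is independent of the whole vector $(Y_{t^\ast},T^\ast_0,T^\ast_1)$, conditioning on the complier event and on a value of the potential outcome leaves the law of $T_{t^\ast}$ unchanged, so each complier-conditional joint density factors into a complier-conditional outcome density times a misclassification probability: $f_{(Y_1,T_1)\mid T_0^\ast<T_1^\ast}(y,t)=f_{Y_1\mid T_0^\ast<T_1^\ast}(y)\,P(T_1=t)$, and symmetrically for the $T^\ast=0$ branch. Substituting into the Step-one identity, evaluating at $t=1$ and $t=0$, subtracting, and using $P(T_1=1)-P(T_1=0)=1-2P(T_1=0)$ and $P(T_0=0)-P(T_0=1)=1-2P(T_0=1)$ collapses everything to
$$
\Delta f_{(Y,T)\mid Z}(y,1)-\Delta f_{(Y,T)\mid Z}(y,0)=P^{\ast}(T_0^\ast<T_1^\ast)\Bigl(\bigl(1-2P(T_1=0)\bigr)f_{Y_1\mid T_0^\ast<T_1^\ast}(y)+\bigl(1-2P(T_0=1)\bigr)f_{Y_0\mid T_0^\ast<T_1^\ast}(y)\Bigr).
$$

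Step three: conclude. The complier probability and the two conditional densities are nonnegative, so the sign is governed entirely by the two scalar coefficients, and Assumption~\ref{assumption-missclass}(i) — which forces each misclassification rate to be at most one half — delivers $1-2P(T_1=0)\ge 0$ and $1-2P(T_0=1)\ge 0$; hence the whole expression is nonnegative, as claimed. The step I expect to need the most care is the factorization in Step two: one has to check that the independence in Assumption~\ref{assumption-missclass}(ii), being joint in $(Y_{t^\ast},T^\ast_0,T^\ast_1)$ rather than merely marginal, suffices to pull the misclassification probability out even after conditioning both on the complier subpopulation and on the realized outcome value. Everything else is bookkeeping. As a sanity check, integrating the displayed identity over $y$ recovers $\Delta E_P[T\mid Z]\ge 0$, and the coefficients $1-2P(T_{t^\ast}=1-t^\ast)$ are exactly the slack, so the restriction binds (with equality for every $y$) precisely when one of the misclassification rates equals $1/2$.
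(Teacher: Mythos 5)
Your Steps 1 and 2 are correct. The complier decomposition from the proof of Lemma \ref{lemma2} is reusable exactly as you say, and the factorization in Step 2 is legitimate: since $T_{t^\ast}$ is independent of the whole vector $(Y_{t^\ast},T^\ast_0,T^\ast_1)$, it remains independent of $Y_{t^\ast}$ with unchanged marginal after conditioning on the complier event, so
$$
\Delta f_{(Y,T)\mid Z}(y,1)-\Delta f_{(Y,T)\mid Z}(y,0)=P^{\ast}(T_0^\ast<T_1^\ast)\Bigl[\bigl(1-2P(T_1=0)\bigr)f_{Y_1\mid T_0^\ast<T_1^\ast}(y)+\bigl(1-2P(T_0=1)\bigr)f_{Y_0\mid T_0^\ast<T_1^\ast}(y)\Bigr]
$$
is a correct identity. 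The gap is Step 3. Assumption \ref{assumption-missclass}(i) is the Hausman--Abrevaya--Scott-Morton condition that the \emph{sum} $P(T_0=1)+P(T_1=0)$ is less than one (the displayed inequality in the paper is evidently a typo for $1-P(T_0=1)-P(T_1=0)>0$, which is what the invertibility and sign arguments in Appendix C actually use). A bound on the sum does not force each rate below one half: take $P(T_0=1)=0.9$ and $P(T_1=0)=0.05$. Then $1-2P(T_0=1)=-0.8$, and your identity can be strictly negative --- e.g.\ if everyone is a complier with $Y_0\equiv 0$ and $Y_1\equiv 1$, a direct computation gives $\Delta f_{(Y,T)\mid Z}(0,1)-\Delta f_{(Y,T)\mid Z}(0,0)=-0.8$. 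So the implication you assert in Step 3 is false and the proof does not close; what your identity actually establishes is that the conclusion requires $\max\{P(T_0=1),P(T_1=0)\}\le 1/2$, which is strictly stronger than Assumption \ref{assumption-missclass}(i).

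For what it is worth, your algebra is sharper than the paper's. The paper's proof instead expands $\Delta f_{(Y,T)\mid Z}(y,t)$ in terms of $\Delta f_{(Y,T^\ast)\mid Z}(y,t^\ast)$, invokes the inequalities $\Delta f_{(Y,T^\ast)\mid Z}(y,1)\ge 0\ge \Delta f_{(Y,T^\ast)\mid Z}(y,0)$, and asserts that the difference equals $(1-P(T_0=1)-P(T_1=0))\bigl(\Delta f_{(Y,T^\ast)\mid Z}(y,1)-\Delta f_{(Y,T^\ast)\mid Z}(y,0)\bigr)$. The true coefficients are $1-2P(T_1=0)$ on the first term and $2P(T_0=1)-1$ on the second, which match your expression and agree with the paper's common factor only when the two misclassification rates coincide. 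So the discrepancy between your derivation and the paper's is not a defect of your Steps 1--2; it is your Step 3 that fails, and it cannot be repaired under the assumption as stated.
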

\begin{proof}
Assumption \ref{assumption-missclass} (i) implies
\begin{eqnarray*}
f_{(Y,T)\mid Z}(y,t)
&=&
\sum_{t^\ast=0,1}f_{T\mid Y=y,T^\ast=t^\ast,Z}(t)f_{(Y,T^\ast)\mid Z}(y,t^\ast)\\
&=&
\sum_{t^\ast=0,1}f_{T_{t^\ast}\mid Y_{t^\ast}=y,T^\ast=t^\ast,Z}(t)f_{(Y,T^\ast)\mid Z}(y,t^\ast)\\
&=&
\sum_{t^\ast=0,1}f_{T_{t^\ast}}(t)f_{(Y,T^\ast)\mid Z}(y,t^\ast)\\
\Delta f_{(Y,T)\mid Z}(y,1)
&=&
\sum_{t^\ast=0,1}f_{T_{t^\ast}}(1)\Delta f_{(Y,T^\ast)\mid Z}(y,t^\ast)\\
\Delta f_{(Y,T)\mid Z}(y,0)
&=&
\sum_{t^\ast=0,1}f_{T_{t^\ast}}(0)\Delta f_{(Y,T^\ast)\mid Z}(y,t^\ast).
\end{eqnarray*}
\begin{eqnarray*}
\Delta f_{(Y,T)\mid Z}(y,1)-\Delta f_{(Y,T)\mid Z}(y,0)
&=&
\sum_{t^\ast=0,1}(f_{T_{t^\ast}}(1)-f_{T_{t^\ast}}(0))\Delta f_{(Y,T^\ast)\mid Z}(y,t^\ast)\\
&=&
(f_{T_0}(1)-f_{T_0}(0))\Delta f_{(Y,T^\ast)\mid Z}(y,0)\\&&\quad+(f_{T_1}(1)-f_{T_1}(0))\Delta f_{(Y,T^\ast)\mid Z}(y,1)\\
&=&
(1-f_{T_0}(1)-f_{T_1}(0))(\Delta f_{(Y,T^\ast)\mid Z}(y,1)-\Delta f_{(Y,T^\ast)\mid Z}(y,0)).
\end{eqnarray*}
As in \citet[][Proposition 1.1]{kitagawa:2014} and \citet[][Theorem 1]{mourifie/wan:2014}, Assumptions \ref{assumption1} implies the following inequalities 
\begin{eqnarray*}
&&f_{(Y,T^\ast)\mid Z=0}(y,0)\geq f_{(Y,T^\ast)\mid Z=1}(y,0)\geq 0\\
&&0\leq f_{(Y,T^\ast)\mid Z=0}(y,1)\leq f_{(Y,T^\ast)\mid Z=1}(y,1). 
\end{eqnarray*}
Therefore 
$$
\Delta f_{(Y,T)\mid Z}(y,1)-\Delta f_{(Y,T)\mid Z}(y,0)\geq 0.
$$
\end{proof}

The sharp identified set is characterized as follows. 
\begin{theorem}\label{theorem1-misscl}
Suppose that Assumptions \ref{assumption1} and \ref{assumption-missclass} hold, and consider an arbitrary data distribution $P$ of $(Y,T,Z)$. 
Denote $[x]_+=x1\{x\geq 0\}$ and denote $\frac{c}{0}=c\times \infty$.
Then $\Theta_I(P)=\Theta$ if $TV_Y=0$; otherwise 
$$
\Theta_I(P)=\left\{\frac{\Delta E_P[Y\mid Z]}{\Delta E_P[T\mid Z]+\omega_1(1-2\Delta E_P[T\mid Z])}:\ \omega_1\in\Omega_1\right\},
$$
where 
$$
\Omega_1=\left[\sup_{y\in\mathbf{Y}}\left[\frac{\Delta f_{(Y,T)\mid Z}(y,1)}{\Delta f_{(Y,T)\mid Z}(y,1)-\Delta f_{(Y,T)\mid Z}(y,0)}\right]_+
,\inf_{y\in\mathbf{Y}}\left[\frac{f_{(Y,T)\mid Z=0}(y,1)}{f_{(Y,T)\mid Z=0}(y,0)-f_{(Y,T)\mid Z=0}(y,1)}\right]_+\right].
$$
\end{theorem}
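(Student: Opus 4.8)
The plan is to read the statement as a sharp-identification result and establish the two inclusions $\Theta_I(P)\subseteq\{\ldots\}$ and $\{\ldots\}\subseteq\Theta_I(P)$, using the mixture representation already derived inside the proof of Lemma~\ref{more_micsc} as the workhorse. The first step is a reduction. That proof shows that, under Assumptions~\ref{assumption1} and~\ref{assumption-missclass}, the observed conditional densities satisfy, for $z,t\in\{0,1\}$,
$$
f_{(Y,T)\mid Z=z}(y,t)=\sum_{t^\ast\in\{0,1\}}f_{T_{t^\ast}}(t)\,f_{(Y,T^\ast)\mid Z=z}(y,t^\ast),
$$
so the map $P^\ast\mapsto P$ acts, in the relevant coordinates, as a \emph{known} $2\times 2$ mixing matrix with entries built from the two misclassification rates $\omega_1:=P^\ast(T_0=1)$ and $\omega_0:=P^\ast(T_1=0)$, applied to a LATE-consistent conditional law $f_{(Y,T^\ast)\mid Z}$. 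Integrating this identity in $y$ gives $\Delta E_P[T\mid Z]=(1-\omega_0-\omega_1)\,P^\ast(T_0^\ast<T_1^\ast)$, and combining with the Imbens--Angrist identity~(\ref{eqIA}) yields $\theta(P^\ast)=(1-\omega_0-\omega_1)\,\Delta E_P[Y\mid Z]/\Delta E_P[T\mid Z]$. Thus, once $P$ is fixed, $\theta(P^\ast)$ depends on the measurement error only through $\omega_0+\omega_1$; re-expressing this in the pivoted parameter appearing in the statement converts it into the scalar $\omega_1$ of the displayed formula, and the whole problem reduces to determining the exact range of feasible misclassification rates. (The degenerate case $TV_Y=0$ is disposed of separately, exactly as in Theorems~\ref{theorem1} and~\ref{theorem1-less}.)

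For the inclusion $\Theta_I(P)\subseteq\{\ldots\}$, I would take an arbitrary $P^\ast\in\mathcal{P}^\ast$ with $P^\ast_{(Y,T,Z)}=P$, invert the mixing matrix --- legitimate because the positive-correlation/invertibility condition in Assumption~\ref{assumption-missclass} keeps it nonsingular --- to write $f_{(Y,T^\ast)\mid Z=z}(y,t^\ast)$ as explicit affine combinations of the observed densities, and then impose the constraints that the result be a genuine LATE-world distribution: nonnegativity of these latent densities, and nonnegativity of the induced always-taker, never-taker, and complier densities (the last two being the outcome densities whose normalized versions enter $\theta$). Using Lemma~\ref{more_micsc} to fix signs, each constraint becomes a one-sided bound on $\omega_1$ indexed by $y$; intersecting over $y$ produces precisely $\omega_1\ge\sup_y[\cdot]_+$ and $\omega_1\le\inf_y[\cdot]_+$, i.e.\ $\omega_1\in\Omega_1$. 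Hence every element of $\Theta_I(P)$ has the claimed form.

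For the reverse inclusion I would run the construction in the opposite direction: given $\omega_1\in\Omega_1$, choose an admissible pair $(\omega_0,\omega_1)$ with the matching sum, define $f_{(Y,T^\ast)\mid Z}$ by the inversion formulas, and then assemble a full $P^\ast$ on $(Y_0,Y_1,T_0,T_1,T^\ast_0,T^\ast_1,Z)$ --- give $Z$ its observed marginal; split the latent conditional mass into complier/always-taker/never-taker shares with conditional outcome laws read off from $f_{(Y,T^\ast)\mid Z}$; and draw $(T_0,T_1)$ independently of everything else with $P(T_0=1)=\omega_1$, $P(T_1=0)=\omega_0$. One then checks that $P^\ast$ satisfies Assumptions~\ref{assumption1} and~\ref{assumption-missclass}, that $P^\ast_{(Y,T,Z)}=P$ (automatic, since the matrix was inverted exactly), and that $\theta(P^\ast)$ equals the displayed value (immediate from the reduction above). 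Boundary conventions --- $\Delta E_P[T\mid Z]\in\{0,1/2\}$ and a vanishing $[\cdot]_+$ --- are absorbed via $c/0=c\times\infty$ as in the statement.

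The main obstacle is the precise matching of the two directions: showing the feasible set of the measurement-error parameter is \emph{exactly} $\Omega_1$, neither larger nor smaller. This requires careful bookkeeping of all the nonnegativity and monotonicity inequalities after the matrix inversion, getting the signs inside the $[\cdot]_+$ right via Lemma~\ref{more_micsc}, confirming that $\Omega_1$ is nonempty (this is where the positive-correlation condition in Assumption~\ref{assumption-missclass} earns its keep, again through Lemma~\ref{more_micsc}), and verifying in the construction that for each target $\omega_1$ one can indeed spread the inverted latent mass into \emph{nonnegative} type-conditional outcome densities --- i.e.\ that the pointwise-in-$y$ constraints defining $\Omega_1$ are jointly sufficient, not merely necessary.
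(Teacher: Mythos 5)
Your plan follows the same route as the paper's proof: write the observed densities as a $2\times 2$ mixture of the latent densities $f_{(Y,T^\ast)\mid Z}$ with weights given by the two misclassification rates, invert the mixing matrix, translate the nonnegativity and monotonicity restrictions of the latent LATE model (the inequalities of \citealp{kitagawa:2014} and \citealp{mourifie/wan:2014}) into pointwise-in-$y$ bounds on the misclassification parameters for necessity, and construct an admissible $P^\ast$ from any admissible parameter value for sufficiency. Those two halves, including the role of Lemma \ref{more_micsc} in fixing the signs before dividing, match the paper's argument step for step.

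The genuine gap is in the reduction that is supposed to connect this machinery to the displayed formula. Your identity $\Delta E_P[T\mid Z]=(1-p_0-p_1)P^\ast(T_0^\ast<T_1^\ast)$, with $p_0=P(T_0=1)$ and $p_1=P(T_1=0)$, makes $\theta(P^\ast)=(1-p_0-p_1)\Delta E_P[Y\mid Z]/\Delta E_P[T\mid Z]$ a function of the \emph{sum} $p_0+p_1$, whereas the set in the statement is parametrized by the single ratio $\omega_1=p_1/(1-p_0-p_1)$. The sentence ``re-expressing this in the pivoted parameter \ldots converts it into the scalar $\omega_1$'' is not a change of variables: a function of $p_0+p_1$ is not a function of $p_1/(1-p_0-p_1)$ alone, and your own sufficiency step (``choose an admissible pair with the matching sum'') tacitly requires the identified set of the sum, which a priori involves the feasible range of $\omega_0$ as well as $\Omega_1$. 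The paper closes this hole by two ingredients absent from your sketch: it derives, from the explicit matrix inversion, an expression for $\Delta E[T^\ast\mid Z]$ involving only $\Delta E_P[T\mid Z]$ and $\omega_1$, and it proves that the joint identified set for $(\omega_0,\omega_1)$ is the \emph{product} $\Omega_0\times\Omega_1$, so that projecting onto the $\omega_1$ coordinate is lossless. Without both, the argument does not reach the stated characterization. (Two further cautions: if you push your own algebra through, it does not reproduce the paper's expression $\Delta E[T^\ast\mid Z]=\Delta E_P[T\mid Z]+\omega_1(1-2\Delta E_P[T\mid Z])$, so reconciling the two is precisely the step that must be nailed down rather than asserted; and your labels are swapped relative to the statement, since you set $\omega_1=P(T_0=1)$, the false-positive rate, while the theorem's $\omega_1$ is built from the false-negative rate $P(T_1=0)$.)
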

\begin{proof}
Define 
\begin{eqnarray*}
\left(
\begin{array}{cc}
a_{00}(y)&a_{01}(y)\\
a_{10}(y)&a_{11}(y)
\end{array}
\right)
&=&
\left(
\begin{array}{cc}
f_{(Y,T)\mid Z=0}(y,0)&f_{(Y,T)\mid Z=1}(y,0)\\
f_{(Y,T)\mid Z=0}(y,1)&f_{(Y,T)\mid Z=1}(y,1)\\
\end{array}
\right)
\\
\left(
\begin{array}{cc}
b_{00}(y)&b_{01}(y)\\
b_{10}(y)&b_{11}(y)
\end{array}
\right)
&=&
\left(
\begin{array}{cc}
f_{(Y,T^\ast)\mid Z=0}(y,0)&f_{(Y,T^\ast)\mid Z=1}(y,0)\\
f_{(Y,T^\ast)\mid Z=0}(y,1)&f_{(Y,T^\ast)\mid Z=1}(y,1)\\
\end{array}
\right),
\end{eqnarray*}
and define $p_0=f_{T\mid T^\ast=0}(1)$ and $p_1=f_{T\mid T^\ast=1}(0)$.
Assumption \ref{assumption-missclass} (i) implies
\begin{eqnarray*}
f_{(Y,T)\mid Z}(y,t)
&=&
\sum_{t^\ast=0,1}f_{T\mid Y=y,T^\ast=t^\ast,Z}(t)f_{(Y,T^\ast)\mid Z}(y,t^\ast)\\
&=&
\sum_{t^\ast=0,1}f_{T_{t^\ast}\mid Y_{t^\ast}=y,T^\ast=t^\ast,Z}(t)f_{(Y,T^\ast)\mid Z}(y,t^\ast)\\
&=&
\sum_{t^\ast=0,1}f_{T_{t^\ast}}(t)f_{(Y,T^\ast)\mid Z}(y,t^\ast),
\end{eqnarray*}
so that 
$$
\left(
\begin{array}{cc}
a_{00}(y)&a_{01}(y)\\
a_{10}(y)&a_{11}(y)
\end{array}
\right)
=
\left(
\begin{array}{cc}
1-p_0&p_1\\
p_0&1-p_1
\end{array}
\right)
\left(
\begin{array}{cc}
b_{00}(y)&b_{01}(y)\\
b_{10}(y)&b_{11}(y)
\end{array}
\right).
$$
Assumption \ref{assumption-missclass} (ii) implies that the matrix $\left(
\begin{array}{cc}
1-p_0&p_1\\
p_0&1-p_1
\end{array}
\right)$ is invertible. 
Thus 
$$
\left(
\begin{array}{cc}
b_{00}(y)&b_{01}(y)\\
b_{10}(y)&b_{11}(y)
\end{array}
\right)
=
\frac{1}{1-p_0-p_1}
\left(
\begin{array}{cc}
(1-p_1)a_{00}(y)-p_0a_{10}(y)&(1-p_1)a_{01}(y)-p_0a_{11}(y)\\
-p_1a_{00}(y)+(1-p_0)a_{10}(y)&-p_1a_{01}(y)+(1-p_0)a_{11}(y)
\end{array}
\right)
$$
and 
\begin{eqnarray*}
\theta(P)
&=&
\frac{\Delta E[Y\mid Z]}{\frac{p_1}{1-p_0-p_1}(1-2\Delta E[T\mid Z])+\Delta E[T\mid Z]},
\end{eqnarray*}
because 
\begin{eqnarray*}
\Delta E[T^\ast\mid Z]
&=&
f_{T^\ast\mid Z=1}(1)-f_{T^\ast\mid Z=0}(1)\\
&=&
\int b_{11}(y)dy-\int b_{10}(y)dy\\
&=&
\frac{p_1\int (a_{00}(y)-a_{01}(y))dy+(1-p_0)\int (a_{11}(y)-a_{10}(y))dy}{1-p_0-p_1}\\
&=&
\frac{p_1\Delta E[1-T\mid Z]+(1-p_0)\Delta E[T\mid Z]}{1-p_0-p_1}\\
&=&
\frac{p_1}{1-p_0-p_1}(1-2\Delta E[T\mid Z])+\Delta E[T\mid Z].
\end{eqnarray*}
Define $\omega_0=\frac{p_0}{1-p_0-p_1}$ and $\omega_1=\frac{p_1}{1-p_0-p_1}$.
In the rest of the proof, I am going to show that the sharp identified set for $(\omega_0,\omega_1)$ is $\Omega_0\times\Omega_1$ where 
$$
\Omega_0=\left[\sup_{y\in\mathbf{Y}}\left[\frac{-\Delta f_{(Y,T)\mid Z}(y,0)}{\Delta f_{(Y,T)\mid Z}(y,1)-\Delta f_{(Y,T)\mid Z}(y,0)}\right]_+
,\inf_{y\in\mathbf{Y}}\left[\frac{f_{(Y,T)\mid Z=1}(y,0)}{f_{(Y,T)\mid Z=1}(y,1)-f_{(Y,T)\mid Z=1}(y,0)}\right]_+\right].
$$

First, I am going to show that the identified set for $(\omega_0,\omega_1)$ is a subset of $\Omega_0\times\Omega_1$.
As in \citet[][Proposition 1.1]{kitagawa:2014} and \citet[][Theorem 1]{mourifie/wan:2014}, Assumptions \ref{assumption1} implies the following inequalities 
\begin{eqnarray*}
&&f_{(Y,T^\ast)\mid Z=0}(y,0)\geq f_{(Y,T^\ast)\mid Z=1}(y,0)\geq 0\\
&&0\leq f_{(Y,T^\ast)\mid Z=0}(y,1)\leq f_{(Y,T^\ast)\mid Z=1}(y,1). 
\end{eqnarray*}
In the notation of this proof, 
\begin{eqnarray*}
&&(1-p_1)a_{00}(y)-p_0a_{10}(y)\geq (1-p_1)a_{01}(y)-p_0a_{11}(y)\geq 0\\
&&0\leq -p_1a_{00}(y)+(1-p_0)a_{10}(y)\leq -p_1a_{01}(y)+(1-p_0)a_{11}(y).
\end{eqnarray*}
By some algebraic operations, 
\begin{eqnarray*}
&&\omega_0(a_{11}(y)-a_{01}(y))\leq a_{01}(y)\\
&&\omega_1(a_{00}(y)-a_{10}(y))\leq a_{10}(y)\\
&&\omega_0(a_{11}(y)-a_{01}(y)-a_{10}(y)+a_{00}(y))\geq -(a_{00}(y)-a_{01}(y))\\
&&\omega_1(a_{11}(y)-a_{01}(y)-a_{10}(y)+a_{00}(y))\geq -(a_{11}(y)-a_{10}(y)).
\end{eqnarray*}
Since $\omega_0,\omega_1\geq 0$, 
\begin{eqnarray*}
&&\omega_0[a_{11}(y)-a_{01}(y)]_+\leq a_{01}(y)\\
&&\omega_1[a_{00}(y)-a_{10}(y)]_+\leq a_{10}(y)\\
&&\omega_0(a_{11}(y)-a_{01}(y)-a_{10}(y)+a_{00}(y))\geq -(a_{00}(y)-a_{01}(y))\\
&&\omega_1(a_{11}(y)-a_{01}(y)-a_{10}(y)+a_{00}(y))\geq -(a_{11}(y)-a_{10}(y)).
\end{eqnarray*}
By Lemma \ref{more_micsc}, $a_{11}(y)-a_{01}(y)-a_{10}(y)+a_{00}(y)\geq 0$ and then 
$$
-\frac{a_{00}(y)-a_{01}(y)}{a_{11}(y)-a_{01}(y)-a_{10}(y)+a_{00}(y)}  
\leq\omega_0\leq
\frac{a_{01}(y)}{[a_{11}(y)-a_{01}(y)]_+}
$$
$$
-\frac{a_{11}(y)-a_{10}(y)}{a_{11}(y)-a_{01}(y)-a_{10}(y)+a_{00}(y)}  
\leq\omega_1\leq
\frac{a_{10}(y)}{[a_{00}(y)-a_{10}(y)]_+}.
$$

Then, I am going to show that $\Omega_0\times\Omega_1$ is included in the identified set for $(\omega_0,\omega_1)$.
Let $(\omega_0,\omega_1)$ be any element of $\Omega_0\times\Omega_1$. 
Define $\tilde{p}_0=\frac{\omega_0}{1+\omega_0+\omega_1}$ and $\tilde{p}_1=\frac{\omega_1}{1+\omega_0+\omega_1}$.
Then define 
\begin{eqnarray*}
\left(
\begin{array}{cc}
\tilde{f}_{(Y,T^\ast)\mid Z=0}(y,0)&\tilde{f}_{(Y,T^\ast)\mid Z=1}(y,0)\\
\tilde{f}_{(Y,T^\ast)\mid Z=0}(y,1)&\tilde{f}_{(Y,T^\ast)\mid Z=1}(y,1)\\
\end{array}
\right)
&=&
\left(
\begin{array}{cc}
1-\tilde{p}_0&\tilde{p}_1\\
\tilde{p}_0&1-\tilde{p}_1
\end{array}
\right)^{-1}
\left(
\begin{array}{cc}
a_{00}(y)&a_{01}(y)\\
a_{10}(y)&a_{11}(y)
\end{array}
\right).
\end{eqnarray*}
By construction, 
\begin{eqnarray*}
&&\tilde{f}_{(Y,T^\ast)\mid Z=0}(y,0)\geq\tilde{f}_{(Y,T^\ast)\mid Z=1}(y,0)\geq 0\\
&&0\leq\tilde{f}_{(Y,T^\ast)\mid Z=0}(y,0)\leq\tilde{f}_{(Y,T^\ast)\mid Z=1}(y,0).
\end{eqnarray*}
This is a sufficient condition for $\tilde{f}_{(Y,T^\ast)\mid Z}$ to be consistent with Assumptions \ref{assumption1}, which is shown in \citet[][Proposition 1.1]{kitagawa:2014} and \citet[][Theorem 1]{mourifie/wan:2014}.
Thus $(\omega_0,\omega_1)$ belongs to the identified set for $(\omega_0,\omega_1)$.
\end{proof}

\section*{Appendix D: Compliers-defiers-for-marginals condition}
This section demonstrates that a variant of Theorem \ref{theorem1} still holds under a weaker condition than the deterministic monotonicity condition in Assumption \ref{assumption1} (ii). 
I consider the following assumption. 
\begin{assumption}\label{assnption_dechaisemartin:2014}
(i) For each $t^\ast=0,1$, $Z$ is independent of $(T_{t^\ast},Y_{t^\ast},T^\ast_{0},T^\ast_{1})$.
(ii) There is a subset $C_F$ of $\{T^\ast_1>T^\ast_0\}$ such that 
\begin{eqnarray*}
&&P(C_F)=P(T^\ast_1<T^\ast_0)\\
&&f_{(Y_0,T_0)\mid C_F}=f_{(Y_0,T_0)\mid T^\ast_1<T^\ast_0}\\
&&f_{(Y_1,T_1)\mid C_F}=f_{(Y_1,T_1)\mid T^\ast_1<T^\ast_0}.
\end{eqnarray*}
(iii) $0<P(Z=1)<1$. 
\end{assumption} 
Assumption \ref{assnption_dechaisemartin:2014} (ii) imposes the compliers-defiers-for-marginals condition \citep{dechaisemartin:2014} on the joint distribution of $(Y_{t^\ast},T_{t^\ast})$. Under this assumption, Theorem 2.1 of \cite{dechaisemartin:2014} shows that 
$$
E[Y_0-Y_1\mid C_V]=\frac{\Delta E[Y\mid Z]}{\Delta E[T^\ast\mid Z]}
$$
where $C_V=\{T^\ast_1>T^\ast_0\}\setminus C_F$.\footnote{In fact he uses a weaker condition than the compliers-defiers-for-marginals condition to establish this equality. I use the compliers-defiers-for-marginals condition here, because it makes the characterization in \ref{theorem1_clement} exactly the same to Theorem 2.1.}

\begin{theorem}\label{theorem1_clement}
Suppose that Assumption \ref{assumption1} holds, and consider an arbitrary data distribution $P$ of $(Y,T,Z)$. 
The identified set $\tilde{\Theta}_I(P)$ for $E[Y_0-Y_1\mid C_V]$ is characterized in the same way as Theorem \ref{theorem1}: 
$\tilde{\Theta}_I(P)=\Theta$ if $TV_{(Y,T)}=0$; otherwise, 
$$
\tilde{\Theta}_I(P)
=
\begin{cases}
\left[\Delta E_P[Y\mid Z],\frac{\Delta E_P[Y\mid Z]}{TV_{(Y,T)}}\right]&\mbox{ if }\Delta E_P[Y\mid Z]>0\\
\{0\}&\mbox{ if }\Delta E_P[Y\mid Z]=0\\
\left[\frac{\Delta E_P[Y\mid Z]}{TV_{(Y,T)}},\Delta E_P[Y\mid Z]\right]&\mbox{ if }\Delta E_P[Y\mid Z]<0.
\end{cases}
$$
\end{theorem}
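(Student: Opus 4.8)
The plan is to mirror the proof of Theorem~\ref{intergreated_thereom1} in the case without $R$ and $V$ (the case that yields Theorem~\ref{theorem1}), replacing the complier set $\{T^\ast_0<T^\ast_1\}$ everywhere by $C_V$. Two ingredients are needed: (a) an Imbens--Angrist-type identity expressing $E_{P^\ast}[Y_0-Y_1\mid C_V]$ through $\Delta E_{P^\ast}[Y\mid Z]$ and $P^\ast(C_V)$, and (b) a total-variation inequality $TV_{(Y,T)}\le P^\ast(C_V)$. Together these give the ``outer'' inclusion; sharpness will come for free from the constructions already built for Theorem~\ref{intergreated_thereom1}.

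For (a) I would invoke Theorem~2.1 of \cite{dechaisemartin:2014} as quoted above: under Assumption~\ref{assnption_dechaisemartin:2014}, $E_{P^\ast}[Y_0-Y_1\mid C_V]\,\Delta E_{P^\ast}[T^\ast\mid Z]=\Delta E_{P^\ast}[Y\mid Z]$, and moreover $\Delta E_{P^\ast}[T^\ast\mid Z]=P^\ast(C_V)\in[0,1]$, since (using Assumption~\ref{assnption_dechaisemartin:2014}(i)) $T^\ast_1=1$ exactly on the always-takers and compliers while $T^\ast_0=1$ exactly on the always-takers and defiers, so that $\Delta E_{P^\ast}[T^\ast\mid Z]=P^\ast(\{T^\ast_1>T^\ast_0\})-P^\ast(\{T^\ast_1<T^\ast_0\})=P^\ast(\{T^\ast_1>T^\ast_0\})-P^\ast(C_F)=P^\ast(C_V)$. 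This plays for $E_{P^\ast}[Y_0-Y_1\mid C_V]$ exactly the role that (\ref{eqIA}) plays for the local average treatment effect, giving the sign relation and the lower bound as in Lemma~\ref{lemma1}. For (b) I would redo the decomposition of $f_{(Y,T)\mid Z=z}$ from the proof of Lemma~\ref{lemma2}, now over the four latent types (always-takers, never-takers, compliers $\{T^\ast_1>T^\ast_0\}$, defiers $\{T^\ast_1<T^\ast_0\}$), again using Assumption~\ref{assnption_dechaisemartin:2014}(i). Subtracting, the always-taker and never-taker blocks cancel, leaving
$$
f_{(Y,T)\mid Z=1}-f_{(Y,T)\mid Z=0}=P^\ast(\mathrm{CP})\bigl(f_{(Y_1,T_1)\mid\mathrm{CP}}-f_{(Y_0,T_0)\mid\mathrm{CP}}\bigr)-P^\ast(\mathrm{DF})\bigl(f_{(Y_1,T_1)\mid\mathrm{DF}}-f_{(Y_0,T_0)\mid\mathrm{DF}}\bigr).
$$
Splitting the complier block along $\mathrm{CP}=C_F\sqcup C_V$ and invoking Assumption~\ref{assnption_dechaisemartin:2014}(ii), i.e. $P^\ast(C_F)=P^\ast(\mathrm{DF})$ and $f_{(Y_{t^\ast},T_{t^\ast})\mid C_F}=f_{(Y_{t^\ast},T_{t^\ast})\mid\mathrm{DF}}$ for $t^\ast=0,1$, the $C_F$ contribution cancels the defier block exactly, so that $f_{(Y,T)\mid Z=1}-f_{(Y,T)\mid Z=0}=P^\ast(C_V)\bigl(f_{(Y_1,T_1)\mid C_V}-f_{(Y_0,T_0)\mid C_V}\bigr)$. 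Applying $\tfrac12\sum_{t}\int|\cdot|\,d\mu_Y$ and the triangle inequality yields $TV_{(Y,T)}\le P^\ast(C_V)$. Combining (a) and (b), whenever $TV_{(Y,T)}>0$ we get $|E_{P^\ast}[Y_0-Y_1\mid C_V]|=|\Delta E_{P^\ast}[Y\mid Z]|/P^\ast(C_V)\le|\Delta E_{P^\ast}[Y\mid Z]|/TV_{(Y,T)}$, which with the sign relation and lower bound from (a) shows $\tilde{\Theta}_I(P)$ is contained in the set displayed in the theorem; the degenerate case is exactly $TV_{(Y,T)}=0$, since then $P^\ast(C_V)=0$ forces $\Delta E_{P^\ast}[Y\mid Z]=0$.

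The reverse inclusion needs no new construction: the distributions $P^\ast_L$, $P^\ast_U$, and their mixtures $\lambda P^\ast_L+(1-\lambda)P^\ast_U$ exhibited in the proof of Theorem~\ref{intergreated_thereom1} (with $R$, $V$ suppressed) all have $(T^\ast_0,T^\ast_1)$ supported on $\{(0,0),(0,1),(1,1)\}$, hence no defiers; therefore Assumption~\ref{assnption_dechaisemartin:2014}(ii) holds vacuously with $C_F=\emptyset$, $C_V$ coincides with the complier set, and $E_{P^\ast}[Y_0-Y_1\mid C_V]$ equals the value of the local average treatment effect already computed there. As $\lambda$ ranges over $[0,1]$ these trace out exactly the claimed interval, while they all reproduce $P$; the case $TV_{(Y,T)}=0$ is handled by the same ``all always-takers'' construction as in Theorem~\ref{theorem1}, which again has no defiers, so $\tilde{\Theta}_I(P)=\Theta$. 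The one genuinely new step, and the place to be careful, is the four-type decomposition in (b) — in particular, checking that the compliers-defiers-for-marginals condition is precisely what is needed to cancel the defier contribution to $\Delta f_{(Y,T)\mid Z}$; once that is in place, everything else reduces verbatim to the arguments for Theorem~\ref{theorem1}.
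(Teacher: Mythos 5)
Your proposal is correct and takes essentially the same route as the paper: the paper likewise cancels the $C_F$ block against the defiers via the compliers-defiers-for-marginals condition to obtain $\Delta f_{(Y,T)\mid Z}=P^{\ast}(C_V)\bigl(f_{(Y_1,T_1)\mid C_V}-f_{(Y_0,T_0)\mid C_V}\bigr)$, deduces $TV_{(Y,T)}\leq P^{\ast}(C_V)=\Delta E[T^\ast\mid Z]$, and gets sharpness for free by noting that the identified set under this weaker assumption must contain the one from Theorem \ref{theorem1} (whose attaining distributions have no defiers). Your write-up just makes explicit the no-defiers check that the paper compresses into one sentence.
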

\begin{proof}
Since Theorem \ref{theorem1} gives the sharp identified set under a stronger assumption of this theorem, 
the identified set $\tilde{\Theta}_I(P)$ in this theorem should be equal to or larger than the set in Theorem \ref{theorem1}. 
As a result, it suffices to show that  $\tilde{\Theta}_I(P)$ is a subset of  the set in Theorem \ref{theorem1}. 
By the Assumption \ref{assnption_dechaisemartin:2014} (ii), 
\begin{eqnarray*}
f_{(Y,T)\mid Z=0}(y,t)
&=&
P(T^\ast_1=T^\ast_0=1\mid Z=0)f_{(Y,T)\mid T^\ast_1=T^\ast_0=1,Z=0}(y,t)\\
&&+P(T^\ast_1=T^\ast_0=0\mid Z=0)f_{(Y,T)\mid T^\ast_1=T^\ast_0=0,Z=0}(y,t)\\
&&+P(C_F\mid Z=0)f_{(Y,T)\mid C_F,Z=0}(y,t)\\
&&+P(C_V\mid Z=0)f_{(Y,T)\mid C_V,Z=0}(y,t)\\
&&+P(T^\ast_1<T^\ast_0\mid Z=0)f_{(Y,T)\mid T^\ast_1<T^\ast_0,Z=0}(y,t)\\
&=&
P(T^\ast_1=T^\ast_0=1)f_{(Y_1,T_1)\mid T^\ast_1=T^\ast_0=1}(y,t)\\
&&+P(T^\ast_1=T^\ast_0=0)f_{(Y_0,T_0)\mid T^\ast_1=T^\ast_0=0}(y,t)\\
&&+P(C_F)f_{(Y_0,T_0)\mid C_F}(y,t)\\
&&+P(C_V)f_{(Y_0,T_0)\mid C_V}(y,t)\\
&&+P(T^\ast_1<T^\ast_0)f_{(Y_1,T_1)\mid T^\ast_1<T^\ast_0}(y,t)
\end{eqnarray*}
\begin{eqnarray*}
f_{(Y,T)\mid Z=1}(y,t)
&=&
P(T^\ast_1=T^\ast_0=1\mid Z=1)f_{(Y,T)\mid T^\ast_1=T^\ast_0=1,Z=1}(y,t)\\
&&+P(T^\ast_1=T^\ast_0=0\mid Z=1)f_{(Y,T)\mid T^\ast_1=T^\ast_0=0,Z=1}(y,t)\\
&&+P(C_F\mid Z=1)f_{(Y,T)\mid C_F,Z=1}(y,t)\\
&&+P(C_V\mid Z=1)f_{(Y,T)\mid C_V,Z=1}(y,t)\\
&&+P(T^\ast_1<T^\ast_0\mid Z=1)f_{(Y,T)\mid T^\ast_1<T^\ast_0,Z=1}(y,t)\\
&=&
P(T^\ast_1=T^\ast_0=1)f_{(Y_1,T_1)\mid T^\ast_1=T^\ast_0=1}(y,t)\\
&&+P(T^\ast_1=T^\ast_0=0)f_{(Y_0,T_0)\mid T^\ast_1=T^\ast_0=0}(y,t)\\
&&+P(C_F)f_{(Y_1,T_1)\mid C_F}(y,t)\\
&&+P(C_V)f_{(Y_1,T_1)\mid C_V}(y,t)\\
&&+P(T^\ast_1<T^\ast_0)f_{(Y_0,T_0)\mid T^\ast_1<T^\ast_0}(y,t)
\end{eqnarray*}
\begin{eqnarray*}
\Delta f_{(Y,T)\mid Z}(y,t)
&=&
P(C_F)(f_{(Y_1,T_1)\mid C_F}(y,t)-f_{(Y_0,T_0)\mid C_F}(y,t))\\
&&+P(C_V)(f_{(Y_1,T_1)\mid C_V}(y,t)-f_{(Y_0,T_0)\mid C_V}(y,t))\\
&&+P(T^\ast_1<T^\ast_0)(f_{(Y_0,T_0)\mid T^\ast_1<T^\ast_0}(y,t)-f_{(Y_1,T_1)\mid T^\ast_1<T^\ast_0}(y,t))\\
&=&
P(C_V)(f_{(Y_1,T_1)\mid C_V}(y,t)-f_{(Y_0,T_0)\mid C_V}(y,t)).
\end{eqnarray*}
Based on the definition of the total variation distance, 
$$
TV_{(Y,T)}=P(C_V)\frac{1}{2}\sum_{t=0,1}\int |f_{(Y_1,T_1)\mid C_V}(y,t)-f_{(Y_0,T_0)\mid C_V}(y,t)|d\mu_Y(y),
$$
and therefore 
$$
TV_{(Y,T)}\leq P(C_V)\leq 1.
$$
Since $P(C_V)=\Delta E[T\mid Z]$, 
$$
TV_{(Y,T)}\leq \Delta E[T\mid Z]\leq 1.
$$
This concludes that $E[Y_0-Y_1\mid C_V]$ is included in 
$$
\begin{cases}
[\Delta E[Y\mid Z],\frac{\Delta E[Y\mid Z]}{TV_{(Y,T)}}]&\mbox{ if }\Delta E[Y\mid Z]> 0\\
\{0\}&\mbox{ if }\Delta E[Y\mid Z]=0\\
[\frac{\Delta E[Y\mid Z]}{TV_{(Y,T)}},\Delta E[Y\mid Z]]&\mbox{ if }\Delta E[Y\mid Z]<0.
\end{cases}
$$
\end{proof}
\singlespacing
\bibliography{/Users/tu10/Dropbox/mybib}\onehalfspacing
\end{document}